\numberwithin{equation}{section}
\newtheorem{theorem}{Theorem}[section]
\newtheorem{corollary}[theorem]{Corollary}
\theoremstyle{definition}
\newtheorem{definition}[theorem]{Definition}
\theoremstyle{remark}
\numberwithin{equation}{section}
\DeclareMathOperator{\RE}{Re}
\newtheorem{theo}{Lemma}
\newcounter{tmp}
\begin{document}
	\title[ Differential Subordination of certain class of starlike functions ]{Differential Subordination of certain class of starlike functions}
	
	\author[Neha Verma]{Neha Verma}
	\address{Department of Applied Mathematics, Delhi Technological University, Delhi--110042, India}
	\email{nehaverma1480@gmail.com}

	\author[S. Sivaprasad Kumar]{S. Sivaprasad Kumar}
	\address{Department of Applied Mathematics, Delhi Technological University, Delhi--110042, India}
	\email{spkumar@dce.ac.in}

	\subjclass[2010]{30C45, 30C80}
	
	\keywords{Differential Subordination, Exponential, Starlike}
	\maketitle
\begin{abstract}
This paper presents several results concerning second and third-order differential subordination for the class $\mathcal{S}^{*}_{e}:=\{f\in \mathcal{A}:zf'(z)/f(z)\prec e^z\}$, which represents the class of starlike functions associated with exponential function.

\end{abstract}
\maketitle
	
\section{Introduction}
	\label{intro}
\noindent Suppose $\mathcal{H}=\mathcal{H}(\mathbb{D})$ be the class of analytic functions in $\mathbb{D}:=\{z\in \mathbb{C}:|z|<1\}$. For any positive integer $n$ and $a\in \mathbb{C}$, define
    $$\mathcal{H}[a,n]:=\{f\in \mathcal{H}:f(z)=a+a_nz^n+a_{n+1}z^{n+1}+a_{n+2}z^{n+2}+\cdots\}.$$
Let $\mathcal{A}$ be the class of normalized analytic functions defined as 
$$\mathcal{A}=\{f\in \mathcal{H}:f(z)=z+a_2z^2+a_3z^3+\cdots\}$$
and $\mathcal{S}\subset\mathcal{A}$ contains univalent functions. Any function $f\in \mathcal{S}$ is called starlike if and only if $\RE(zf'(z)/f(z))>0$ and the class consisting of all such functions is denoted by $\mathcal{S}^{*}$. Let $h$ and $g$ are two functions in the class $\mathcal{A}$, we say that $h$ is subordinate to $g$ (denoted by $h\prec g$), if there exists a Schwarz function $w$ with $w(0)=0$ and $|w(z)|\leq |z|$ such that $h(z)=g(w(z))$. Moreover, if $g$ is univalent, then $h\prec g$ if and only if $h(0)=g(0)$ and $h(\mathbb{D})\subseteq g(\mathbb{D})$. A significant development in the direction of differential subordination was made with the appearance of Miller and Mocanu's monograph, ``Differential Subordination and Univalent Functions," in 1981. It is a compendium of its time and this book initiated a significant revolution among researchers. In 1992, Ma and Minda \cite {ma-minda} unified various subclasses of $\mathcal{S}^{*}$ by introducing the following class $\mathcal{S}^{*}(\varphi)$ through subordination:
\begin{equation}
		\mathcal{S}^{*}(\varphi)=\bigg\{f\in \mathcal {A}:\dfrac{zf'(z)}{f(z)}\prec \varphi(z) \bigg\},\label{mindaclass}
\end{equation}
with $\varphi$ as an analytic univalent function so that $\RE\varphi(z)>0$, $\varphi(\mathbb{D})$ is symmetric about the real axis and starlike with respect to $\varphi(0)=1$ with $\varphi'(0)>0$. Through the process of specializing the function $\varphi$ in \eqref{mindaclass}, researchers have obtained several significant subclasses of $\mathcal{S}^*$. Some of these subclasses are mentioned below for ready reference in Table \ref{8 table1}.

\begin{table}[hbt!]
\caption{Some subclasses of $\mathcal{S}^{*}$}
\centering
\begin{tabular}{|l |l| l|} 
 \hline
 Class& Author(s) & Reference \\ [1ex] 
 \hline
 $\mathcal{S}^*[C,D]:=\mathcal{S}^*((1+Cz)/(1+Dz))$   & Janowski&\cite{1janowski}     \\
 $\mathcal{S}^{*}_{\rho}:=\mathcal{S}^{*}(1+\sinh^{-1}(z))$ & Arora and Kumar&\cite{kush} \\ 
  $\mathcal{S}^{*}_{S}:=\mathcal{S}^{*}(1+\sin z)$ &  Cho et al.&\cite{chosine} \\
  $\mathcal{S}^{*}_{SG}:=\mathcal{S}^{*}(2/(1+e^{-z}))$ & Goel and Kumar &\cite{goel} \\
  $\mathcal{S}^{*}_{\varrho}:=\mathcal{S}^{*}(1+ze^z)$ & Kumar and Kamaljeet &\cite{kumar-ganganiaCardioid-2021}\\
   $\mathcal{S}^{*}_{q}:=\mathcal{S}^{*}(z+\sqrt{1+z^2})$ &  Raina and  Sok\'{o}\l&\cite{raina} \\
 $\mathcal{S}^{*}_ L:=\mathcal{S}^{*}(\sqrt{1+z})$&  Sok\'{o}\l \ and Stankiewicz  &\cite{stan}   \\[1ex] 
 \hline
\end{tabular}
\label{8 table1}
\end{table}

\noindent The exponential function plays a crucial role in diverse scientific fields, including physics, signal processing, number theory, and models describing growth and decay in biology and economics. Specifically, in the realm of GFT, Mendiratta et al. \cite{mendi} introduced and examined the class $\mathcal{S}^{*}_{e}:=\mathcal{S}^{*}(e^z)$, incorporating the use of $e^z$ within their class. Geometrically, we say a function $f\in \mathcal{S}^{*}_{e}$ if and only if $zf'(z)/f(z)\in \Delta_e:=\{\delta\in \mathbb{C}:|\log \delta|<1\}$. In this investigation, they obtained the conditions on $\beta$ parameter so that $p(z)\prec e^z$ when $1+\beta zp'(z)/p(z)\prec h(z)$ by choosing $h(z)=(1+Cz)/(1+Dz)$, $\sqrt{1+z}$ and $e^z$. Further, Kumar and Ravichandran \cite{sushil} extended their results and obtained sharp bounds on $\beta$ so that $p(z)\prec e^z$ when $1+\beta z p'(z)/p^j(z)\prec h(z)$ for $j=0,1,2$ and $h(z)=(1+Cz)/(1+Dz)$ and $\sqrt{1+z}$. Later on, Naz et al. \cite{adibastarlikenessexponential} derived a wide range of first-order subordination implications for the exponential function and generalized the results of \cite{mendi}. Madaan et al. \cite{madaan} conducted an investigation for the class of admissible functions associated with the lemniscate of Bernoulli for several implications of first and second order differential subordination. Similarly, Goel and Kumar \cite{goel} derived results focusing on first-order subordination results considering the class $\mathcal{S}^{*}_{SG}$. To explore the latest developments in this theory, one can refer to \cite{goelhigher,mushtaq}. There do exist good literature on first-order differential subordination, see \cite{adibastarlikenessexponential,ckms} and the references therein. However, the work on second and higher-order differential subordination is still in the process of development. 


Motivated by the aforementioned works and its applications, we establish results on second and third-order differential subordination implications for starlike functions associated with the exponential function. These results are derived through the technique of admissibility conditions.
Our results extend the study of the class $\mathcal{S}^{*}_e$ specifically involving differential subordination implications result. 

Now, we state some basic definitions and notations required for differential subordination.


\begin{definition}\cite{antoninoandmiller}
Let $\xi(r,s,t,u;z):\mathbb{C}^4\times \mathbb{D}\rightarrow\mathbb{C}$ and $h(z)$ be a univalent function in $\mathbb{D}$, if $p$ is an analytic function in $\mathbb{D}$ satisfying the third-order differential subordination
\begin{equation}\label{8 def3}
    \xi(p(z),zp'(z),z^2p''(z),z^3p'''(z);z)\prec h(z)
\end{equation}
then $p$ is called the solution of the differential subordination. The univalent function $q$ is said to be a dominant of the solutions of the differential subordination if $p\prec q$ for all $p$ satisfying \eqref{8 def3}. A dominant $\bar{q}$ that satisfies $\bar{q}\prec q$ for all dominants $q$ of \eqref{8 def3} is said to be the best dominant of \eqref{8 def3}, which is unique upto the rotation.
\end{definition}
Furthermore, let $Q$ denote the set of analytic and univalent functions $q\in\overline{\mathbb{D}}\setminus \mathbb{E}(q)$, where
  $$ \mathbb{E}(q)=\{\zeta\in \partial \mathbb{D}:\lim_{z\rightarrow\zeta}q(z)=\infty\}$$
such that $q'(\zeta)\neq 0$ for $\zeta\in \partial \mathbb{D}\setminus \mathbb{E}(q)$. The subclass of $Q$ for which $q(0)=a$ is denoted by $Q(a)$.

\begingroup
\setcounter{tmp}{\value{theo}}
\setcounter{theo}{0} 
\renewcommand\thetheo{\Alph{theo}}
\begin{theo}\cite{antoninoandmiller}\label{lemmaformk}
Let $z_0\in \mathbb{D}$ and $r_0=|z_0|$. Let $f(z)=a_nz^n+a_{n+1}z^{n+1}+\cdots$ be continuous on $\overline{\mathbb{D}}_{r_0}$ and analytic on $\mathbb{D}\cup\{z_0\}$ with $f(z)\neq 0$ and $n\geq 2$. If $|f(z_0)|=\max \{|f(z)|:z\in \overline{\mathbb{D}}_{r_0}\}$ and $|f'(z_0)|=\max\{|f'(z)|:z\in \overline{\mathbb{D}}_{r_0}\}$, then there exist real constants $m$, $k$ and $l$ such that
\begin{equation*}
      \frac{z_0f'(z_0)}{f(z_0)}=m,\quad 1+\frac{z_0f''(z_0)}{f'(z_0)}=k\quad \text{and}\quad 2+\RE\bigg(\frac{z_0f'''(z_0)}{f''(z_0)}\bigg)=l\quad \text{where}\quad l\geq k\geq m\geq n\geq2.
\end{equation*}
\end{theo}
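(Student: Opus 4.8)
The plan is to reduce everything to a single real-variable extremum argument applied, in turn, to $|f|$, $|f'|$ and $\RE(z f'''/f'')$ on the closed disc $\overline{\mathbb{D}}_{r_0}$. First I would treat the classical part, namely $z_0 f'(z_0)/f(z_0)=m$ and $1+z_0 f''(z_0)/f'(z_0)=k$ with $k\ge m\ge n$. Since $|f(z_0)|$ is the maximum of $|f|$ on $\overline{\mathbb{D}}_{r_0}$ and $f$ is analytic on a neighbourhood of $z_0$, Jack's lemma (equivalently, the Clunie–Jack lemma, itself a consequence of the Schwarz–Pick setup behind the power series $f=a_nz^n+\cdots$) gives that $z_0 f'(z_0)/f(z_0)$ is a real number $m$ with $m\ge n$. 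The standard refinement comes from writing $g(z)=z f'(z)/f(z)$ and observing that at an interior maximum of $|f|$ one also controls the second derivative: differentiating $\log f$ twice and using that $t\mapsto |f(r_0 e^{it})|$ has a maximum at the argument of $z_0$ (so its second $t$-derivative is $\le 0$) yields, after separating real and imaginary parts, that $1+z_0 f''(z_0)/f'(z_0)$ is real and $\ge m$. This is exactly the Jack–Miller–Mocanu chain $k\ge m\ge n$.

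The genuinely new ingredient is the third-order term, and here the extra hypothesis $|f'(z_0)|=\max\{|f'(z)|:z\in\overline{\mathbb{D}}_{r_0}\}$ is what must be exploited. The idea is to apply the \emph{same} second-order argument to $f'$ in place of $f$: since $f'(z)=n a_n z^{n-1}+\cdots$ has a zero of order $n-1\ge 1$ at the origin and $|f'|$ attains its maximum over the disc at $z_0$, Jack's lemma applied to $f'$ gives that $z_0 f''(z_0)/f'(z_0)$ is real and $\ge n-1$, hence $k=1+z_0f''(z_0)/f'(z_0)$ is real (reconfirming the above) and, more importantly, one gets a handle on $z_0 f'''(z_0)/f''(z_0)$ by running the interior-maximum inequality for $t\mapsto |f'(r_0 e^{it})|$ one order higher. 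Concretely, set $\Phi(t)=\log|f'(r_0 e^{it})|$; then $\Phi'(t_0)=0$ and $\Phi''(t_0)\le 0$ at $t_0=\arg z_0$, and expanding $\Phi''$ in terms of $z f''/f'$ and $z^2 f'''/f'$ produces, upon taking real parts, precisely the inequality $2+\RE(z_0 f'''(z_0)/f''(z_0))\ge k$. Chaining this with the previous step gives $l\ge k\ge m\ge n\ge 2$.

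The main obstacle I anticipate is the bookkeeping in the second derivative of $\Phi(t)=\log|f'(r_0e^{it})|$: one has $\Phi'(t)=\RE\big(i\,\zeta f''(\zeta)/f'(\zeta)\big)$ with $\zeta=r_0e^{it}$, and a careful differentiation shows
\[
\Phi''(t)=-\RE\!\left(\frac{\zeta f''(\zeta)}{f'(\zeta)}+\frac{\zeta^2 f'''(\zeta)}{f'(\zeta)}-\Big(\frac{\zeta f''(\zeta)}{f'(\zeta)}\Big)^{2}\right),
\]
so that $\Phi''(t_0)\le 0$ must be combined with the already-established facts that $\zeta_0 f''(\zeta_0)/f'(\zeta_0)=k-1$ is real and $\ge n-1$ in order to convert the inequality into the stated bound on $l=2+\RE(\zeta_0 f'''(\zeta_0)/f''(\zeta_0))$; one also writes $\zeta^2 f'''/f' = (\zeta f''/f')\cdot(\zeta f'''/f'')$ to pass between the two normalisations. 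Establishing that the quantity is in fact \emph{real} (not merely that its real part is large) requires the first-derivative condition $\Phi'(t_0)=0$ together with the analogous vanishing at the $f$-level; the inequality $l\ge k$ then drops out of $\Phi''(t_0)\le 0$. Everything else — the existence of the real constants, continuity of $f$ on $\overline{\mathbb{D}}_{r_0}$, and analyticity at $z_0$ — is used only to legitimise differentiating under the modulus and invoking Jack's lemma, and is routine.
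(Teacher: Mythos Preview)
The paper does not supply a proof of this lemma; it is quoted verbatim from Antonino and Miller \cite{antoninoandmiller} and used as a black box. So there is no ``paper's own proof'' to compare against.

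Your reconstruction is essentially the standard Antonino--Miller argument and it works. The chain $m\ge n$ and $k\ge m$ is indeed the classical Jack--Miller--Mocanu max-modulus computation for $|f|$ on the circle $|z|=r_0$, and the step $l\ge k$ follows exactly as you outline: apply Jack's lemma to $f'$ (whose power series begins at order $n-1\ge 1$) at the same point $z_0$, then use the second-variation inequality $\Phi''(t_0)\le 0$ for $\Phi(t)=\log|f'(r_0e^{it})|$. Carrying out the calculus you sketch gives
\[
(k-1)\Bigl(1+\RE\frac{z_0f'''(z_0)}{f''(z_0)}-(k-1)\Bigr)\ge 0,
\]
and since $k-1\ge n-1\ge 1>0$ (which also guarantees $f''(z_0)\neq 0$) one divides to obtain $l\ge k$.

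One small remark: you write that you must establish ``that the quantity is in fact \emph{real}'' for the third-order term, but the statement already defines $l$ as $2+\RE(z_0f'''(z_0)/f''(z_0))$, so reality of $l$ is automatic; only $m$ and $k$ need to be shown real, and the vanishing of the first angular derivative of $\log|f|$ and $\log|f'|$ at $t_0$ handles those. Otherwise the plan is sound.
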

\endgroup


\section{Preliminaries}

\noindent In 2020, Kumar and Goel \cite{goelhigher} made modifications to the results established by Antonino and Miller \cite{antoninoandmiller}, ensuring that the Ma-Minda functions meet the requirements for third-order differential subordination. The revised results are presented below in the form of a definition and theorem, which are needed to achieve our main results for the class $\mathcal{S}^*_e$. 

\begin{definition}\cite{goelhigher}
Let $\Omega$ be a set in $\mathbb{C}$, $q\in Q$ and $k\geq m\geq n\geq 2$. The class of admissible operators $\xi_n[\Omega,q]$ consists of those $\xi:\mathbb{C}^4\times \mathbb{D}\rightarrow \mathbb{C}$ that satisfy the admissibility condition
\begin{equation*}
 \xi(r,s,t,u;z)\notin \Omega\quad \text{whenever}\quad z\in \mathbb{D},\quad r=q(\zeta),\quad s=m\zeta q'(\zeta),      
\end{equation*}
\begin{equation*}
 \RE\bigg(1+\frac{t}{s}\bigg)\geq m\bigg(1+\RE \frac{\zeta q''(\zeta)}{q'(\zeta)}\bigg)\quad \text{and}\quad \RE \frac{u}{s}\geq m^2\RE \frac{\zeta^2 q'''(\zeta)}{q'(\zeta}+3m(k-1)\RE \frac{\zeta q''(\zeta)}{q'(\zeta)}
 \end{equation*}
for $\zeta\in \partial \mathbb{D}\setminus \mathbb{E}(q)$. 
\end{definition}

\begingroup
\setcounter{tmp}{\value{theo}}
\setcounter{theo}{1} 
\renewcommand\thetheo{\Alph{theo}}
\begin{theo}\cite{goelhigher}\label{8 firsttheoremthirdorder}
  Let $p\in \mathcal{H}[a,n]$ with $m\geq n\geq2$, and let $q\in Q(a)$ such that it satisfies
\begin{equation}
       \bigg|\frac{zp'(z)}{q'(\zeta)}\bigg|\leq m,
\end{equation}
for $z\in \mathbb{D}$ and $\zeta\in \partial \mathbb{D}\setminus \mathbb{E}(q)$. If $\Omega$ is a set in $\mathbb{C}$, $\xi\in \Psi_n[\Omega,a]$ and
$$\xi(p(z),zp'(z),z^2p''(z),z^3p'''(z);z)\subset \Omega,$$
then $p\prec q$.
\end{theo}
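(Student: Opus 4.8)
The plan is to run the classical Miller--Mocanu admissibility argument in its third-order form, using Lemma~\ref{lemmaformk} to convert the boundary behaviour of $p$ into the admissibility inequalities. First I would argue by contradiction: assume $p\not\prec q$. Since $p(0)=a=q(0)$ and $q$ is univalent, the standard topological lemma of Miller and Mocanu produces a radius $r_0\in(0,1)$ and a point $z_0$ with $|z_0|=r_0$ such that $p(\mathbb{D}_{r_0})\subseteq q(\mathbb{D})$ while $p(z_0)\in\partial q(\mathbb{D})$; because $z_0\in\mathbb{D}$ the value $p(z_0)$ is finite, so $\zeta_0:=q^{-1}(p(z_0))\in\partial\mathbb{D}\setminus\mathbb{E}(q)$ and $q^{-1}$ is analytic near $p(z_0)$. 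I would then set $\phi:=q^{-1}\circ p$, which is analytic on $\mathbb{D}_{r_0}$, extends analytically across $z_0$, is continuous on $\overline{\mathbb{D}}_{r_0}$, satisfies $|\phi|<1$ on $\mathbb{D}_{r_0}$, $\phi(z_0)=\zeta_0$, and --- since $q'(0)\neq0$ and $p\in\mathcal{H}[a,n]$ --- vanishes to order at least $n\geq2$ at the origin.

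The next step is to bring $\phi$ under the hypotheses of Lemma~\ref{lemmaformk}. Here the bound $|zp'(z)/q'(\zeta)|\leq m$ is what I would exploit: writing $\phi'(z)=p'(z)/q'(\phi(z))$, this hypothesis controls $|z\phi'(z)|$ on $\overline{\mathbb{D}}_{r_0}$ and, together with a limiting argument over radii $r\uparrow r_0$ (and a normal-families/compactness selection of the boundary point), lets me arrange that $z_0$ realizes simultaneously the two maxima $|\phi(z_0)|=\max_{\overline{\mathbb{D}}_{r_0}}|\phi|$ and $|\phi'(z_0)|=\max_{\overline{\mathbb{D}}_{r_0}}|\phi'|$ required by Lemma~\ref{lemmaformk}. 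Granting this, the lemma applied to $f=\phi$ yields real constants with $l\geq k\geq m\geq n\geq2$ and
\begin{equation*}
\frac{z_0\phi'(z_0)}{\phi(z_0)}=m,\qquad 1+\frac{z_0\phi''(z_0)}{\phi'(z_0)}=k,\qquad 2+\RE\!\left(\frac{z_0\phi'''(z_0)}{\phi''(z_0)}\right)=l,
\end{equation*}
equivalently $z_0\phi'(z_0)=m\zeta_0$, $z_0^2\phi''(z_0)=m(k-1)\zeta_0$, and $z_0^3\phi'''(z_0)=m(k-1)\big((l-2)+i\tau\big)\zeta_0$ for some real $\tau$.

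Then I would differentiate the identity $p=q\circ\phi$ up to third order, evaluate at $z_0$, and substitute. Writing $r=p(z_0)$, $s=z_0p'(z_0)$, $t=z_0^2p''(z_0)$, $u=z_0^3p'''(z_0)$, the chain rule gives $r=q(\zeta_0)$ and $s=m\zeta_0q'(\zeta_0)$, and a short computation gives
\begin{equation*}
\RE\!\left(1+\frac{t}{s}\right)=k+m\,\RE\frac{\zeta_0q''(\zeta_0)}{q'(\zeta_0)}\;\geq\; m\!\left(1+\RE\frac{\zeta_0q''(\zeta_0)}{q'(\zeta_0)}\right),
\end{equation*}
\begin{equation*}
\RE\frac{u}{s}=m^2\,\RE\frac{\zeta_0^2q'''(\zeta_0)}{q'(\zeta_0)}+3m(k-1)\RE\frac{\zeta_0q''(\zeta_0)}{q'(\zeta_0)}+(k-1)(l-2)\;\geq\; m^2\,\RE\frac{\zeta_0^2q'''(\zeta_0)}{q'(\zeta_0)}+3m(k-1)\RE\frac{\zeta_0q''(\zeta_0)}{q'(\zeta_0)},
\end{equation*}
the two inequalities coming from $k\geq m$ and $(k-1)(l-2)\geq0$. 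Thus $(r,s,t,u;z_0)$ meets every clause of the admissibility definition, so $\xi(r,s,t,u;z_0)\notin\Omega$; but by hypothesis $\xi\big(p(z_0),z_0p'(z_0),z_0^2p''(z_0),z_0^3p'''(z_0);z_0\big)=\xi(r,s,t,u;z_0)\in\Omega$, a contradiction. Hence $p\prec q$.

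I expect the only genuinely delicate step to be the verification that Lemma~\ref{lemmaformk} applies --- that is, selecting the extremal point $z_0$ so that $|\phi|$ and $|\phi'|$ attain their maxima over $\overline{\mathbb{D}}_{r_0}$ at the same point --- which is precisely where the hypothesis $|zp'(z)/q'(\zeta)|\leq m$ is consumed. Everything after Lemma~\ref{lemmaformk} is bookkeeping: the chain-rule expansions of $p'$, $p''$, $p'''$ in terms of $q$, $\phi$ and their derivatives, followed by dividing by $s$ and taking real parts, which reproduces the three admissibility clauses verbatim.
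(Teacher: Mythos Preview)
This statement is quoted as Lemma~B from \cite{goelhigher} and is not proved in the present paper, so there is no in-paper argument to compare your proposal against. That said, your outline is exactly the Antonino--Miller/Kumar--Goel strategy that underlies the cited result, and the chain-rule bookkeeping you record for $r$, $s$, $1+t/s$ and $u/s$ is correct and reproduces the three clauses of Definition~2.1 verbatim.

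The genuine soft spot is the one you yourself flag: justifying that Lemma~\ref{lemmaformk} applies, i.e.\ that $z_0$ can be chosen to realize the maxima of \emph{both} $|\phi|$ and $|\phi'|$ over $\overline{\mathbb{D}}_{r_0}$. Your proposed route --- a limiting/normal-families selection over radii $r\uparrow r_0$ --- is not how the source arguments handle this, and I do not see why it would succeed: nothing in the setup forces the two boundary maxima to coalesce at a single point, and compactness alone will not manufacture that coincidence. In \cite{antoninoandmiller} and \cite{goelhigher} the hypothesis $|zp'(z)/q'(\zeta)|\le m$ is consumed more directly at the already-located $z_0$ (note that $z_0p'(z_0)=m_0\zeta_0 q'(\zeta_0)$ with $m_0\ge n$ from the first maximum, so the bound with $\zeta=\zeta_0$ immediately yields $m_0\le m$), and you should consult those proofs for the precise mechanism by which the second-derivative information is extracted. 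A related imprecision: you use the same symbol $m$ for the constant produced by Lemma~\ref{lemmaformk} (which is determined by $\phi$ and $z_0$) and for the fixed bound $m\ge n$ appearing in the hypothesis of Lemma~B and in the admissibility class; these are a priori different numbers, and part of the work in the cited proof is to relate them.
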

\endgroup


Here, we provide two lemmas which are necessary for the proof of results in the coming sections.

\begingroup
\setcounter{tmp}{\value{theo}}
\setcounter{theo}{2} 
\renewcommand\thetheo{\Alph{theo}}
\begin{theo}\cite[Lemma 4, Pg No. 192]{goelhigher}\label{prilemma4}
For any complex number $z$, we have
$$  |\log (1+z)|\geq 1\quad \text{if and only if} \quad |z|\geq e-1.$$
\end{theo}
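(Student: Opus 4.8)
The plan is to convert the two inequalities into a comparison of plane regions. Setting $w=1+z$, the left-hand inequality $|\log(1+z)|\geq 1$ says precisely that $w$ lies outside the open region $\Delta_e=\{w\in\mathbb{C}:|\log w|<1\}$ appearing above, while the right-hand inequality $|z|\geq e-1$ says that $w$ lies outside the open disc $D:=\{w\in\mathbb{C}:|w-1|<e-1\}$ centred at $1$ with radius $e-1$. Since the claim is for every $z$, the asserted equivalence is logically the same as the set identity $\Delta_e=D$, which I would attack through its two inclusions. A first useful observation is that both boundaries pass through $w=e$: indeed $\log e=1$ puts $e\in\partial\Delta_e$, and $|e-1|=e-1$ puts $e\in\partial D$, so the two curves meet on the positive real axis and the whole question reduces to an extremal-distance analysis of $\partial\Delta_e$ relative to the point $1$.

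For the forward (``if'') implication I would establish the inclusion $\Delta_e\subseteq D$ by contraposition: assuming $w=1+z\in\Delta_e$, I want $|z|=|w-1|<e-1$. Parametrise $\partial\Delta_e$ as $w=\exp(e^{i\phi})$ for $\phi\in[0,2\pi)$, since $|\log w|=1$ forces $\log w=e^{i\phi}$. Then the squared distance from $1$ is
$$\rho(\phi)^2:=\big|\exp(e^{i\phi})-1\big|^2=e^{2\cos\phi}-2e^{\cos\phi}\cos(\sin\phi)+1,$$
whose derivative factors as
$$\frac{d}{d\phi}\,\rho(\phi)^2=2e^{\cos\phi}\big(\sin\phi\,\cos(\sin\phi)+\cos\phi\,\sin(\sin\phi)-e^{\cos\phi}\sin\phi\big).$$
The key step is to show that $\rho^2$ attains its maximum at $\phi=0$, where $\rho(0)=e-1$; the critical points sit at $\phi=0$ and $\phi=\pi$, and a sign analysis of the bracketed factor should confirm $\phi=0$ as the global maximiser. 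Since the maximal distance from $1$ to $\partial\Delta_e$ is then $e-1$, attained only at the contact point $w=e$, every interior point of $\Delta_e$ lies strictly inside $D$, which is the contrapositive of the forward implication.

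For the converse (``only if'') implication I would again use contraposition, assuming $|z|=|w-1|<e-1$, i.e.\ $w\in D$, and aiming at $w\in\Delta_e$; this is the reverse inclusion $D\subseteq\Delta_e$. By the same parametrisation it amounts to a uniform \emph{lower} bound $\rho(\phi)\geq e-1$ for all $\phi$, ensuring that no point of $\partial\Delta_e$ intrudes into $D$. I expect this direction to be the main obstacle: unlike the forward part it demands two-sided control of the boundary curve over the entire range of $\phi$, and in particular a careful treatment of the arc with $\RE w<1$ near $\phi=\pi$ (where $w=\exp(e^{i\pi})=1/e$), which is exactly where $\partial\Delta_e$ makes its closest approach to $1$ and where the required lower bound is most delicate. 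The extremal computation of $\rho(\phi)$ begun above, together with the boundary contact at $w=e$ and the convexity of $\exp$, is the ingredient one must control to settle this inclusion. Independently of the converse, the value $z=e-1$ (equivalently $w=e$, where $\log e=1$) realises equality in both inequalities, so the constant $e-1$ is in any case sharp and the boundaries are tangent at $w=e$.
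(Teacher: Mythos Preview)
The paper does not prove this lemma; it is quoted from \cite{goelhigher} and invoked only in the direction $|z|\geq e-1\Rightarrow|\log(1+z)|\geq 1$. Your forward argument for that direction is sound, though a one-line route is available: if $|\log(1+z)|<1$, write $1+z=e^{w}$ with $|w|<1$, and then $|z|=|e^{w}-1|\le\sum_{n\ge 1}|w|^{n}/n!=e^{|w|}-1<e-1$ by the exponential series, which is the contrapositive.

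The converse (``only if'') direction you set out to prove is, however, \emph{false}, and your own boundary analysis already exposes the counterexample. You correctly recast the biconditional as the set identity $\Delta_e=D$ with $D=\{w:|w-1|<e-1\}$, but these sets are not equal: at $\phi=\pi$ your parametrisation gives $w=e^{-1}$ and $\rho(\pi)=|e^{-1}-1|=1-1/e\approx 0.632$, which is far less than $e-1\approx 1.718$. Hence $\partial\Delta_e$ intrudes well inside $D$, so $D\not\subseteq\Delta_e$. Concretely, $z=e^{-1}-1$ satisfies $|\log(1+z)|=|\log(1/e)|=1$ while $|z|=1-1/e<e-1$; perturbing slightly gives strict inequality $|\log(1+z)|>1$ with $|z|<e-1$. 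The ``uniform lower bound $\rho(\phi)\ge e-1$'' you aim for therefore cannot hold, and no argument along these lines can succeed. The lemma as printed is a one-way implication dressed up as an equivalence; only the ``if'' half is true, and that is all the paper ever uses.
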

\endgroup


\begingroup
\setcounter{tmp}{\value{theo}}
\setcounter{theo}{3} 
\renewcommand\thetheo{\Alph{theo}}
\begin{theo}\cite{goel}\label{prilemma42}
Let $r_0\approx 0.546302$ be the positive root of the equation $r^2+2 \cot(1)r-1=0$. Then
$$\bigg|\log \bigg(\frac{1+z}{1-z}\bigg)\bigg|\geq 1\quad \text{on}\quad |z|=R\quad \text{if and only if}\quad R\geq r_0.$$
\end{theo}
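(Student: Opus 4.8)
\emph{Proof proposal.} The plan is to first put $r_0$ into closed form and then to pin down the exact minimum of $\bigl|\log\frac{1+z}{1-z}\bigr|$ over the circle $|z|=R$. The positive root of $r^{2}+2\cot(1)\,r-1=0$ is $-\cot(1)+\csc(1)=\frac{1-\cos 1}{\sin 1}$, and the half-angle identity $\tan(\theta/2)=\frac{1-\cos\theta}{\sin\theta}$ shows that this root equals $\tan(1/2)$; hence $R\ge r_0$ is equivalent to $2\arctan R\ge 1$. So the lemma reduces to the sharp estimate
$$\Bigl|\log\frac{1+z}{1-z}\Bigr|\ge 2\arctan R\qquad(|z|=R),$$
together with the identity $\frac{1+iR}{1-iR}=e^{2i\arctan R}$, which makes the bound sharp at $z=iR$. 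Granting these, $\bigl|\log\frac{1+z}{1-z}\bigr|\ge 1$ on $|z|=R$ holds iff $2\arctan R\ge 1$ iff $R\ge r_0$, the forward implication coming from the value at $z=iR$.

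For the estimate I would pass to $\tfrac12\log\frac{1+z}{1-z}$, which is analytic on $\overline{\mathbb D}_R$ because $\frac{1+z}{1-z}$ maps the disc $|z|\le R<1$ into the right half-plane — the image of $|z|=R$ being the circle of centre $\frac{1+R^{2}}{1-R^{2}}$ and radius $\frac{2R}{1-R^{2}}$, whose leftmost point $\frac{1-R}{1+R}$ is positive. Integrating $\frac{1}{1-\zeta^{2}}$ along the segment $[0,z]$ (on which $1-t^{2}z^{2}\ne 0$) yields
$$\tfrac12\log\frac{1+z}{1-z}=\int_{0}^{1}\frac{z\,dt}{1-t^{2}z^{2}}=z\,f(z),\qquad f(z):=\int_{0}^{1}\frac{dt}{1-t^{2}z^{2}},$$
so that $\bigl|\log\frac{1+z}{1-z}\bigr|=2R\,|f(z)|$ on $|z|=R$, and it is enough to show $|f(z)|\ge\frac{\arctan R}{R}$.

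The heart of the matter is a pointwise bound on the real part of the integrand. With $z=Re^{i\theta}$ one has $t^{2}z^{2}=\lambda e^{i\psi}$, where $\lambda=t^{2}R^{2}\in[0,1)$ and $\psi=2\theta$, and
$$\RE\frac{1}{1-\lambda e^{i\psi}}=\frac{1-\lambda\cos\psi}{1-2\lambda\cos\psi+\lambda^{2}}\ \ge\ \frac{1}{1+\lambda},$$
since the difference, after multiplying through by the positive quantities $|1-\lambda e^{i\psi}|^{2}$ and $1+\lambda$, equals $\lambda(1-\lambda)(1+\cos\psi)\ge 0$. Integrating this in $t$ gives $\RE f(z)\ge\int_{0}^{1}\frac{dt}{1+t^{2}R^{2}}=\frac{\arctan R}{R}>0$, hence $|f(z)|\ge\RE f(z)\ge\frac{\arctan R}{R}$, and therefore $\bigl|\log\frac{1+z}{1-z}\bigr|\ge 2\arctan R$ on $|z|=R$; equality forces $1+\cos\psi=0$, i.e. $z=\pm iR$, which completes the reduction above.

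The computations here are elementary; the genuine difficulty is conceptual — recognising that the extremum sits at $z=\pm iR$ and finding a device (the lower bound on $\RE f$) that proves this transparently. A more pedestrian route would minimise $\tfrac14\bigl(\ln\frac{1+2R\cos\theta+R^{2}}{1-2R\cos\theta+R^{2}}\bigr)^{2}+\bigl(\arctan\frac{2R\sin\theta}{1-R^{2}}\bigr)^{2}$ over $\theta$ directly; there the two denominators appearing after differentiation happen to coincide, and the sign of the derivative reduces to the single-variable inequality $2\cos\theta\,(1-R^{2})\arctan\frac{2R\sin\theta}{1-R^{2}}\le\sin\theta\,(1+R^{2})\ln\frac{1+2R\cos\theta+R^{2}}{1-2R\cos\theta+R^{2}}$ on $(0,\pi/2)$ — provable, but messier than the integral estimate above.
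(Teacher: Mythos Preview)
Your argument is correct. The identification $r_0=\tan(1/2)$ via the quadratic formula and the half-angle identity is right; the integral representation $\tfrac12\log\tfrac{1+z}{1-z}=z\int_0^1\tfrac{dt}{1-t^2z^2}$ is valid on $|z|\le R<1$; the pointwise bound $\RE\tfrac{1}{1-\lambda e^{i\psi}}\ge\tfrac{1}{1+\lambda}$ holds exactly as you verify, yielding $\RE f(z)\ge\tfrac{\arctan R}{R}$ and hence the sharp inequality $\bigl|\log\tfrac{1+z}{1-z}\bigr|\ge 2\arctan R$ with equality at $z=\pm iR$. Both directions of the biconditional then follow as you say.

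There is, however, nothing in the present paper to compare your proof against: Lemma~\ref{prilemma42} is not proved here but simply quoted from \cite{goel}. So you have supplied a self-contained argument where the paper relies on an external reference. Your integral-estimate device is elegant and makes the location of the minimum transparent; the alternative you sketch in your closing paragraph---minimising $\tfrac14\bigl(\ln\tfrac{1+2R\cos\theta+R^2}{1-2R\cos\theta+R^2}\bigr)^2+\bigl(\arctan\tfrac{2R\sin\theta}{1-R^2}\bigr)^2$ directly in~$\theta$---is the more standard calculus approach and is closer in spirit to how such lemmas are typically handled in the literature on admissibility conditions.
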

\endgroup


\section{Main Results}
\noindent We begin with considering the function $q(z):=e^z$ and define the admissibility class $\Psi[\Omega,q]$, where $\Omega\subset \mathbb{C}$. We know that $q$ is analytic and univalent on $\overline{\mathbb{D}}$ with $q(0)=1$ and it maps $\mathbb{D}$ onto the domain $\Delta_{e}$. Since $\mathbb{E}(q)$ is empty for $\zeta\in \partial \mathbb{D}\setminus \mathbb{E}(q)$ if and only if $\zeta=e^{i\theta}$ $(\theta\in[0,2\pi])$. Now, consider
\begin{equation}
    |q'(\zeta)|=e^{\cos \theta}=:b(\theta)\label{8 1}
\end{equation}
has a minimum value of $e^{-1}$. It is evident that $\min |q'(\zeta)|>0$, which implies that $q\in Q(1)$, and consequently, the admissibility class $\Psi[\Omega,q]$ is well-defined. While considering $|\zeta|=1$, we observe that $q(\zeta)\in q(\partial \mathbb{D})=\partial \Delta_e = \{\delta \in \mathbb{C}: |\log \delta| = 1\}$. Consequently, we have $|\log q(\zeta)| = 1$ and $\log (q(\zeta)) = e^{i\theta}$ ($ \theta\in[0,2\pi]$), implies that $q(\zeta) = e^{\zeta}$. Moreover, $\zeta q'(\zeta)=e^{i\theta} e^{e^{i\theta}}$ and
\begin{equation}
    \frac{\zeta q''(\zeta)}{q'(\zeta)}=e^{i\theta}.\label{8 2}
\end{equation}
By comparing the real parts on both sides of \eqref{8 2}, we obtain
\begin{equation}
    \RE\bigg(\frac{\zeta q''(\zeta)}{q'(\zeta)}\bigg)=\cos \theta=:l(\theta).\label{8 3} 
\end{equation}
The function $l(\theta)$ defined in \eqref{8 3} achieves its minimum value at $\theta = \pi$, given by $l(\pi) \approx -1$. Moreover, the class $\Psi[\Omega, e^z]$ is precisely defined as the class of all functions $\xi: \mathbb{C}^3 \times \mathbb{D} \rightarrow \mathbb{C}$ that satisfy the following conditions:
   $$ \xi(r,s,t;z)\notin \Omega, \quad \text{whenever}$$
\begin{equation}
    r=q(\zeta)=e^{e^{i\theta}};\quad s=m\zeta q'(\zeta)=me^{i\theta}e^{e^{i\theta}};\quad \RE\bigg(1+\frac{t}{s}\bigg)\geq m(1+l(\theta)) \label{8 4}
    \end{equation}
    where $z\in \mathbb{D}, \theta\in[0,2\pi]$ and $m\geq 1$. 
If $\xi:\mathbb{C}^2\times \mathbb{D}\rightarrow \mathbb{C}$, then the admissibility condition \eqref{8 4} becomes 
$$\xi(e^{e^{i\theta}},me^{i\theta}e^{e^{i\theta}};z)\notin \Omega\quad (z\in \mathbb{D}, \theta\in[0,2\pi], m\geq 1).$$

Taking $q(z)=e^z$, we deduce the following as a special case of Miller and Mocanu  \cite[Theorem 2.3b]{miller}, which is necessary to prove our main results.

\begingroup
\setcounter{tmp}{\value{theo}}
\setcounter{theo}{4} 
\renewcommand\thetheo{\Alph{theo}}
\begin{theo}\label{8 theorem1}
Let $\xi\in \Psi[\Omega, e^z]$. If $p\in \mathcal{H}[1,n]$ satisfies
    $$ \xi(p(z),zp'(z),z^2p''(z);z)\in \Omega\quad \text{for} \quad z\in \mathbb{D}$$
then $p(z)\prec e^z$.
\end{theo}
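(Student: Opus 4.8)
The plan is to obtain Theorem \ref{8 theorem1} directly from Miller and Mocanu's general second-order result \cite[Theorem 2.3b]{miller} by specialising the dominant to $q(z)=e^z$; the only genuine work is to check that this $q$ fulfils the structural hypotheses of that theorem and that the admissibility class $\Psi[\Omega,e^z]$ described above coincides \emph{exactly} with the admissibility class that appears in \cite[Theorem 2.3b]{miller} for this choice of $q$. Once both points are in place, the conclusion $p\prec q$, i.e.\ $p(z)\prec e^z$, is immediate.

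First I would record the analytic facts about $q(z)=e^z$: it is analytic and univalent on $\overline{\mathbb{D}}$ (if $e^{z_1}=e^{z_2}$ with $z_1,z_2\in\overline{\mathbb{D}}$ then $|z_1-z_2|<2\pi$ forces $z_1=z_2$), it maps $\mathbb{D}$ onto $\Delta_e$, and $\mathbb{E}(q)=\emptyset$. Parametrising $\partial\mathbb{D}$ by $\zeta=e^{i\theta}$, $\theta\in[0,2\pi]$, one has $|q'(\zeta)|=e^{\cos\theta}\geq e^{-1}>0$ as in \eqref{8 1}, so $q'(\zeta)\neq0$ on $\partial\mathbb{D}$ and hence $q\in Q(1)$; this is precisely what makes $\Psi[\Omega,e^z]$ well defined. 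Next I would carry out the boundary computations already indicated in \eqref{8 1}--\eqref{8 4}: $q(\zeta)=e^{e^{i\theta}}$, $\zeta q'(\zeta)=e^{i\theta}e^{e^{i\theta}}$, and $\zeta q''(\zeta)/q'(\zeta)=e^{i\theta}$, whence $\RE\bigl(1+\zeta q''(\zeta)/q'(\zeta)\bigr)=1+\cos\theta=1+l(\theta)$. Substituting these into the generic admissibility requirement $r=q(\zeta)$, $s=m\zeta q'(\zeta)$, $\RE\bigl(1+t/s\bigr)\geq m\bigl(1+\RE(\zeta q''(\zeta)/q'(\zeta))\bigr)$ of \cite[Theorem 2.3b]{miller} collapses it exactly to the condition \eqref{8 4}. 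Thus $\xi\in\Psi[\Omega,e^z]$ in the sense used here is the same as $\xi$ being admissible in the sense of \cite[Theorem 2.3b]{miller} with dominant $e^z$.

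With this identification the theorem follows, and for completeness I would spell out the underlying contradiction argument. Suppose $p\not\prec q$. Since $p(0)=1=q(0)$ and $q$ is univalent on $\overline{\mathbb{D}}$ with $\mathbb{E}(q)=\emptyset$, the Miller--Mocanu selection lemma produces points $z_0\in\mathbb{D}$ and $\zeta_0=e^{i\theta_0}\in\partial\mathbb{D}$ and a number $m\geq n\geq1$ such that $p(z_0)=q(\zeta_0)$, $z_0p'(z_0)=m\zeta_0q'(\zeta_0)$, and $\RE\bigl(1+z_0p''(z_0)/p'(z_0)\bigr)\geq m\bigl(1+\RE(\zeta_0q''(\zeta_0)/q'(\zeta_0))\bigr)$. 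Setting $r=p(z_0)$, $s=z_0p'(z_0)$, $t=z_0^2p''(z_0)$, the triple $(r,s,t)$ satisfies the hypotheses in \eqref{8 4}, so admissibility of $\xi$ forces $\xi(p(z_0),z_0p'(z_0),z_0^2p''(z_0);z_0)\notin\Omega$, contradicting the assumption that this value lies in $\Omega$ for every $z\in\mathbb{D}$. Hence $p\prec e^z$. I expect the main obstacle to be purely bookkeeping: matching the boundary parametrisation $\zeta=e^{i\theta}$ with the $m\zeta q'(\zeta)$ normalisation and confirming that the sense of the real-part inequality is preserved when the generic admissibility condition is rewritten as \eqref{8 4}; the analytic properties of $e^z$ used above (univalence on $\overline{\mathbb{D}}$, non-vanishing $q'$ on $\partial\mathbb{D}$, emptiness of $\mathbb{E}(q)$) are routine.
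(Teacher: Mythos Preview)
Your proposal is correct and follows essentially the same approach as the paper: the paper does not give a separate proof of this lemma but simply states that, after verifying (in the paragraphs preceding it) that $q(z)=e^z$ lies in $Q(1)$ and that the admissibility condition reduces to \eqref{8 4}, the result is a special case of Miller and Mocanu \cite[Theorem~2.3b]{miller}. You carry out exactly these verifications and additionally spell out the underlying contradiction argument, which the paper omits.
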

\endgroup

Further, let $q(z)=e^z$, we have
  $$  \zeta^2\frac{q'''(z)}{q'(\zeta)}=e^{2i\theta}.$$
On comparing the real parts of both the sides, we get
   $$ \RE\bigg(\zeta^2\frac{q'''(z)}{q'(\zeta)}\bigg)=\cos 2\theta=:h(\theta).$$ 
The minimum value of $h(\theta)$ is $-1$, which is attained at $\theta=\pi/2$. Thus, if $\xi:\mathbb{C}^4\times \mathbb{D}\rightarrow\mathbb{C}$ then $\xi\in \Psi[\Omega,e^z]$, provided $\xi$ satisfies the following conditions:
\begin{equation*}
    \xi(r,s,t,u;z)\notin \Omega\quad \text{whenever}\quad r=q(\zeta)=e^{e^{i\theta}},\quad s=m\zeta q'(\zeta)=me^{i\theta}e^{e^{i\theta}},
\end{equation*}
\begin{equation*}
   \RE\bigg(1+\frac{t}{s}\bigg)\geq m(1+l(\theta)) \quad\text{and}\quad \RE \frac{u}{s}\geq m^2h(\theta)+3m(k-1)l(\theta) 
\end{equation*}
for $z\in \mathbb{D}$, $\theta\in[0,2\pi]$ and $k\geq m\geq2$. 
Given the above conditions, we deduce a special case of Lemma \ref{8 firsttheoremthirdorder}, which will be implemented in the last section of this article.

\begingroup
\setcounter{tmp}{\value{theo}}
\setcounter{theo}{5} 
\renewcommand\thetheo{\Alph{theo}}
\begin{theo}\label{8 firstlemmathirdorder}
Let $p\in \mathcal{H}[1,n]$ with $m\geq n\geq 2$ such that for $z\in \mathbb{D}$ and $\zeta\in \partial\mathbb{D}$, it satisfies
  $$ |zp'(z)e^{\zeta}|\leq m. $$    
If $\Omega$ is a set in $\mathbb{C}$ and $\xi\in \Psi[\Omega,e^z]$, then
   $$ \xi(p(z),zp'(z),z^2p''(z),z^3p'''(z);z)\subset \Omega\implies p(z)\prec e^z.$$
\end{theo}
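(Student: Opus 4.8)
The plan is to deduce Lemma~\ref{8 firstlemmathirdorder} directly from the general third-order admissibility principle, Lemma~\ref{8 firsttheoremthirdorder}, applied with $q(z)=e^z$ and $a=1$. Essentially nothing new has to be computed: all the work consists in verifying that, for this particular $q$, (i) $q$ belongs to $Q(1)$, (ii) the growth condition $|zp'(z)/q'(\zeta)|\le m$ of Lemma~\ref{8 firsttheoremthirdorder} is the hypothesis $|zp'(z)e^{\zeta}|\le m$ imposed here, and (iii) the abstract admissible class appearing in Lemma~\ref{8 firsttheoremthirdorder} reduces precisely to the class $\Psi[\Omega,e^z]$ described in the paragraph preceding the statement.

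For (i) I would simply collect the facts already assembled in this section: $q(z)=e^z$ is analytic and univalent on $\overline{\mathbb{D}}$, $q(0)=1$, and since $|q(e^{i\theta})|=e^{\cos\theta}<\infty$ the exceptional set $\mathbb{E}(q)$ is empty, while $q'(\zeta)=e^{\zeta}\neq0$ on $\partial\mathbb{D}$; hence $q\in Q(1)$ and $\partial\mathbb{D}\setminus\mathbb{E}(q)=\partial\mathbb{D}$, every boundary point being of the form $\zeta=e^{i\theta}$, $\theta\in[0,2\pi]$. For (ii), since $q'(\zeta)=e^{\zeta}$, the inequality $|zp'(z)/q'(\zeta)|\le m$ of Lemma~\ref{8 firsttheoremthirdorder} is exactly the bound on $zp'(z)$ assumed in the statement. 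For (iii) I would substitute, into the admissibility condition defining the class $\xi_n[\Omega,q]$, the quantities computed in this section for $q=e^z$ on $\partial\mathbb{D}$, namely $q(\zeta)=e^{e^{i\theta}}$, $\zeta q'(\zeta)=e^{i\theta}e^{e^{i\theta}}$, $\RE(\zeta q''(\zeta)/q'(\zeta))=\cos\theta=l(\theta)$ from \eqref{8 3}, and $\RE(\zeta^2 q'''(\zeta)/q'(\zeta))=\cos2\theta=h(\theta)$. With these substitutions the requirements $r=q(\zeta)$, $s=m\zeta q'(\zeta)$, $\RE(1+t/s)\ge m(1+\RE(\zeta q''(\zeta)/q'(\zeta)))$ and $\RE(u/s)\ge m^2\RE(\zeta^2 q'''(\zeta)/q'(\zeta))+3m(k-1)\RE(\zeta q''(\zeta)/q'(\zeta))$ become verbatim the conditions defining $\Psi[\Omega,e^z]$ recorded above the lemma; thus $\xi\in\Psi[\Omega,e^z]$ is the same as membership in the admissible class of Lemma~\ref{8 firsttheoremthirdorder}. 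Since $p\in\mathcal{H}[1,n]$ with $m\ge n\ge2$ and $\xi(p(z),zp'(z),z^2p''(z),z^3p'''(z);z)\subset\Omega$, all hypotheses of Lemma~\ref{8 firsttheoremthirdorder} hold, and its conclusion gives $p(z)\prec q(z)=e^z$.

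There is no genuine obstacle here; the proof is a matter of matching notation. The only points deserving care are keeping the auxiliary parameters consistent — in particular that the constant $k$ in the definition of $\Psi[\Omega,e^z]$ is the same $k\ge m$ as in the general admissibility condition, and that the chain $k\ge m\ge n\ge2$ is respected throughout — together with the observation (already noted in the text) that $\min_{\theta}|q'(\zeta)|=e^{-1}>0$, which is what makes $\Psi[\Omega,e^z]$ a non-degenerate class and Lemma~\ref{8 firsttheoremthirdorder} applicable. Once these are in place the result follows by a direct appeal to Lemma~\ref{8 firsttheoremthirdorder}.
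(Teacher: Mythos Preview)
Your proposal is correct and follows exactly the route the paper intends: the lemma is stated there as a direct specialization of Lemma~\ref{8 firsttheoremthirdorder} with $q(z)=e^z$, and you have simply written out the verification that the paper leaves implicit.

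One small imprecision worth tightening: in step~(ii) you say the growth condition $|zp'(z)/q'(\zeta)|\le m$ is ``exactly'' the hypothesis $|zp'(z)e^{\zeta}|\le m$. Pointwise these are different, since $1/q'(\zeta)=e^{-\zeta}$, not $e^{\zeta}$. What makes them equivalent is that both inequalities are required for \emph{all} $\zeta\in\partial\mathbb{D}$, and the map $\zeta\mapsto-\zeta$ is a bijection of $\partial\mathbb{D}$ onto itself; hence $\sup_{\zeta\in\partial\mathbb{D}}|e^{-\zeta}|=\sup_{\zeta\in\partial\mathbb{D}}|e^{\zeta}|=e$, and each condition reduces to $|zp'(z)|\le m/e$. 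Once you insert that one-line observation, the deduction from Lemma~\ref{8 firsttheoremthirdorder} is complete.
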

\endgroup


Now, we discuss the implications of the following second order differential subordination, given by
\begin{equation}
    1+\alpha_1 zp'(z)+\alpha_2 z^2p''(z)\prec h(z) \implies p(z)\prec e^z.\label{8 secondorderimplication}
\end{equation}
For different choices of $h(z)$, we derive conditions on the constants $\alpha_1$ and $\alpha_2$ to ensure that \eqref{8 secondorderimplication} holds.

Considering $h(z)=(1+Cz)/(1+Dz)$, we have the following result.

\begin{theorem}\label{8 secondorder1}
Suppose $\alpha_1$, $\alpha_2>0$ and $(\alpha_1 -\alpha_2)(1-D^2)\geq e(C-D)(1+|D|)$, where $-1< D<C\leq 1$. Consider the analytic function $p$ in $\mathbb{D}$ such that $p(0)=1$ and
     $$   1+\alpha_1 zp'(z)+\alpha_2 z^2p''(z)\prec \frac{1+Cz}{1+Dz}$$
implies $p(z)\prec e^z$.
\end{theorem}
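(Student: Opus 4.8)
The plan is to apply Lemma~\ref{8 theorem1} with $q(z)=e^z$, the set $\Omega=h(\mathbb{D})$ where $h(z)=(1+Cz)/(1+Dz)$, and the functional $\xi(r,s,t;z)=1+\alpha_1 s+\alpha_2 t$. To invoke that lemma I must verify that $\xi\in\Psi[\Omega,e^z]$, i.e. that whenever $r=e^{e^{i\theta}}$, $s=m e^{i\theta}e^{e^{i\theta}}$, and $\RE(1+t/s)\ge m(1+l(\theta))$ with $l(\theta)=\cos\theta$ and $m\ge 1$, the value $\xi(r,s,t;z)=1+\alpha_1 s+\alpha_2 t$ lies outside $\Omega$. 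Since $\Omega$ is the image of the Janowski disk, $w\notin\Omega$ is equivalent to the reverse inequality $|w-1|\ge$ (something); more precisely, a point $w$ lies outside $h(\mathbb{D})$ iff it lies outside the disk $h(\mathbb{D})$, whose center and radius are the standard ones: center $(1-CD)/(1-D^2)$ and radius $(C-D)/(1-D^2)$ when $|D|<1$. So the goal reduces to showing
$$\left| 1+\alpha_1 s+\alpha_2 t - \frac{1-CD}{1-D^2}\right| \ge \frac{C-D}{1-D^2}.$$

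First I would rewrite $1+\alpha_1 s+\alpha_2 t$ in a convenient form. Write $t = s\big(\RE(1+t/s)-1\big) + i\,\IM(t)$ — actually it is cleaner to set $\RE(t/s)=:\tau$ with the constraint $\tau+1\ge m(1+\cos\theta)$, i.e. $\tau \ge m(1+\cos\theta)-1$, and to note $1+\alpha_1 s+\alpha_2 t = 1+\alpha_1 s+\alpha_2 s\cdot(t/s)$. Then
$$\RE(1+\alpha_1 s+\alpha_2 t) = 1 + \alpha_1\RE s + \alpha_2\RE(s\cdot t/s).$$
Since $s = m e^{i\theta}e^{e^{i\theta}}$, one computes $\RE s = m e^{\cos\theta}\cos(\theta+\sin\theta)$ and $|s| = m e^{\cos\theta}$. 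The key estimate will be a lower bound on $\big|1+\alpha_1 s+\alpha_2 t - \text{center}\big|$; I would bound it below by its real part (or by $|{\cdot}|\ge|\RE({\cdot})|$ after choosing the worst-case argument), then extract the worst case over $\theta\in[0,2\pi]$ and $m\ge 1$. The role of the hypothesis $(\alpha_1-\alpha_2)(1-D^2)\ge e(C-D)(1+|D|)$ should emerge here: the factor $e$ comes from $e^{\cos\theta}\le e$, the combination $\alpha_1-\alpha_2$ comes from pairing the $\alpha_1 s$ term (contributing positively) against the worst-case negative contribution of the $\alpha_2 t$ term once the constraint on $\RE(t/s)$ is used at its extreme, and $(1+|D|)$ and $(1-D^2)$ come from clearing denominators in the disk description of $\Omega$.

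The main obstacle I anticipate is the bookkeeping in the second-order term: the admissibility hypothesis only controls $\RE(1+t/s)$ from below, so $t$ itself ranges over a half-plane (given $s$), and I must show the whole half-plane's image under $w\mapsto 1+\alpha_1 s+\alpha_2 t$ avoids the disk $\Omega$. The correct way is to observe that $\alpha_2>0$ means increasing $\RE(t/s)$ pushes $\RE(\alpha_2 t)$ in the direction $\RE(\alpha_2 s)$; hence the extreme (closest to $\Omega$) configuration is at the boundary $\RE(1+t/s)=m(1+\cos\theta)$, reducing the problem to a one-real-parameter family in $\IM(t/s)$, which is then handled by a triangle-inequality / minimization argument. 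After that, the remaining step is a single-variable optimization over $\theta$ (with $m=1$ being the worst case, since the left side grows in $m$), where I expect $\theta=\pi$ or $\theta=0$ to be extremal and the stated sharp-looking inequality to fall out. I would present the argument by: (1) recalling the disk form of $\Omega^c$; (2) substituting the admissible triple and using $\alpha_2>0$ to push to the boundary of the $t$-constraint; (3) bounding below via real parts and $e^{\cos\theta}\le e$; (4) concluding with the hypothesis on $\alpha_1,\alpha_2,C,D$.
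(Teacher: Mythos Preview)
Your overall strategy---apply Lemma~\ref{8 theorem1} with $q(z)=e^z$, take $\Omega$ to be the Janowski disk with center $(1-CD)/(1-D^2)$ and radius $(C-D)/(1-D^2)$, and verify the admissibility condition by showing $|\xi(r,s,t;z)-\text{center}|\ge\text{radius}$---is exactly what the paper does. Two details in your sketch need correction, though.

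First, the inequality on $e^{\cos\theta}$ is backwards. You want a \emph{lower} bound on $|\alpha_1 s+\alpha_2 t|$, and since $|s|=m\,e^{\cos\theta}$, the relevant bound is $e^{\cos\theta}\ge e^{-1}$ (attained at $\theta=\pi$), not $e^{\cos\theta}\le e$. This is where the factor $e$ in the hypothesis actually comes from: rearranged, the condition reads $(\alpha_1-\alpha_2)/e\ge (C-D)(1+|D|)/(1-D^2)$.

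Second, bounding $|\xi-c|$ below by $|\RE(\xi-c)|$ directly will fail. Writing $t/s=a+ib$, the admissibility constraint controls only $a\ge m(1+\cos\theta)-1$; the imaginary part $b$ is free. Since $\RE t=a\RE s-b\IM s$ and $\IM s\ne 0$ generically, $\RE(\xi-c)$ can be made to take any real value by varying $b$, so that bound is vacuous. The paper's clean fix is to first use the reverse triangle inequality to peel off the real constant $1-c=-D(C-D)/(1-D^2)$, and then factor
\[
|\alpha_1 s+\alpha_2 t|=\alpha_1|s|\,\Bigl|1+\tfrac{\alpha_2}{\alpha_1}\tfrac{t}{s}\Bigr|\ \ge\ \alpha_1|s|\,\RE\Bigl(1+\tfrac{\alpha_2}{\alpha_1}\tfrac{t}{s}\Bigr).
\]
The right-hand side depends only on $\RE(t/s)$, which \emph{is} controlled, and the free $\IM(t/s)$ has been absorbed harmlessly. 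Minimizing over $a\ge m(1+\cos\theta)-1$, then over $m\ge 1$, then over $\theta$ (all extremal at $m=1$, $\theta=\pi$) gives $|\alpha_1 s+\alpha_2 t|\ge (\alpha_1-\alpha_2)/e$, and the hypothesis finishes the job.
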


\begin{proof}
Suppose $h(z)=(1+Cz)/(1+Dz)$ for $z\in \mathbb{D}$ and
     $$   h(\mathbb{D})=\Omega:=\bigg\{\delta\in \mathbb{C}:\bigg|\delta-\frac{1-CD}{1-D^2}\bigg|<\frac{C-D}{1-D^2}\bigg\}.$$
We define $\xi:\mathbb{C}^3\times \mathbb{D}\rightarrow \mathbb{C}$ as $\xi(\eta_1,\eta_2,\eta_3;z)=1+\alpha_1 \eta_2+\alpha_2 \eta_3$. It is clear that $\xi\in \Psi[\Omega,\Delta_e]$ if $\xi(r,s,t;z)\notin \Omega$ for $z\in \mathbb{D}$. Consider
\begin{align*}
        \bigg|\xi(r,s,t;z)-\frac{1-CD}{1-D^2}\bigg|&=\bigg|1+\alpha_1 s+\alpha_2 t-\frac{1-CD}{1-D^2}\bigg|\\
        &\geq |\alpha_1 s+\alpha_2 t|-\frac{|D|(C-D)}{1-D^2}\\
        &\geq |\alpha_1 s|\RE\bigg(1+\frac{\alpha_2}{\alpha_1}\frac{t}{s}\bigg)-\frac{|D|(C-D)}{1-D^2}\\
        &\geq m\alpha_1 b(\theta)\RE\bigg(1+\frac{\alpha_2}{\alpha_1}(ml(\theta)+m-1)\bigg)-\frac{|D|(C-D)}{1-D^2}.
\end{align*}
Here $b(\theta)$ and $l(\theta)$ are given in \eqref{8 1} and \eqref{8 3}. As $m\geq 1$, therefore
\begin{align*}
   \bigg|\xi(r,s,t;z)-\frac{1-CD}{1-D^2}\bigg|&\geq  \alpha_1 b(\theta)\RE\bigg(1+\frac{\alpha_2}{\alpha_1}l(\theta)\bigg)-\frac{|D|(C-D)}{1-D^2}\\
   &\geq\frac{1}{e}\bigg(\alpha_1-\alpha_2\bigg)-\frac{|D|(C-D)}{1-D^2}\\
   &\geq \frac{C-D}{1-D^2}.
\end{align*}
Thus, $\xi\in \Psi[\Omega,\Delta_e]$. Furthermore, $p(z)\prec e^z$ through Lemma \ref{8 theorem1}.
\end{proof}

If we consider $p(z)=zf'(z)/f(z)$ in above theorem, we get the following result:
\begin{corollary}\label{8 a1a2a3}
Suppose $\alpha_1$, $\alpha_2>0$ and $f\in\mathcal{A}$. Let
\begin{align}
Y_f(z)&:=1+\alpha_1(A_2-A_1^2+A_1)+\alpha_2(A_3+2A_2+2A_1^3-2A_1^2-3A_1A_2).  \label{8 corollary21}
\end{align}
where $A_1:=zf'(z)/f(z)$, $A_2:=z^2f''(z)/f(z)$ and $A_3:=z^3f'''(z)/f(z)$.
Then, $f(z)\in \mathcal{S}^{*}_{e}$ if 
 $Y_{f}(z)\prec (1+Cz)/(1+Dz)$ and $(\alpha_1 -\alpha_2)(1-D^2)\geq e(C-D)(1+|D|)$ whenever $-1< D<C\leq 1$.
\end{corollary}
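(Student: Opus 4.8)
The plan is to obtain this corollary directly from Theorem~\ref{8 secondorder1} via the substitution $p(z) := zf'(z)/f(z)$. Since $f \in \mathcal{A}$ and the hypothesis implicitly requires $Y_f$ to be analytic on $\mathbb{D}$ (so that $f(z)/z$ is non-vanishing there), the function $p$ is analytic in $\mathbb{D}$ with $p(0) = 1$, so the hypotheses imposed on $p$ in Theorem~\ref{8 secondorder1} are met. With this identification, $f \in \mathcal{S}^{*}_{e}$ is, by definition, equivalent to $p(z) \prec e^z$, so it suffices to show that $1 + \alpha_1 zp'(z) + \alpha_2 z^2 p''(z)$ coincides with the function $Y_f(z)$ of \eqref{8 corollary21}.

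The heart of the argument is therefore a differentiation identity. Using logarithmic differentiation, $zp'(z)/p(z) = 1 + zf''(z)/f'(z) - zf'(z)/f(z)$; multiplying by $p = A_1$ and using $zf''/f' = A_2/A_1$ gives $zp'(z) = A_1 + A_2 - A_1^2$, which is precisely the coefficient of $\alpha_1$ in \eqref{8 corollary21}. Differentiating once more through $z^2 p''(z) = z\bigl(zp'(z)\bigr)' - zp'(z)$, and using the analogous identities $zA_1' = A_1 + A_2 - A_1^2$ and $zA_2' = 2A_2 + A_3 - A_1 A_2$ (the latter from $zf'''/f'' = A_3/A_2$), one collects terms to obtain $z^2 p''(z) = A_3 + 2A_2 + 2A_1^3 - 2A_1^2 - 3A_1 A_2$, the coefficient of $\alpha_2$. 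Hence $1 + \alpha_1 zp'(z) + \alpha_2 z^2 p''(z) = Y_f(z)$.

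Combining these, the hypothesis $Y_f(z) \prec (1+Cz)/(1+Dz)$ is exactly $1 + \alpha_1 zp'(z) + \alpha_2 z^2 p''(z) \prec (1+Cz)/(1+Dz)$, and since $\alpha_1, \alpha_2 > 0$, $-1 < D < C \leq 1$ and $(\alpha_1 - \alpha_2)(1 - D^2) \geq e(C-D)(1+|D|)$, Theorem~\ref{8 secondorder1} yields $p(z) \prec e^z$, i.e. $zf'(z)/f(z) \prec e^z$, which means $f \in \mathcal{S}^{*}_{e}$. The only genuine obstacle is carrying out the second-derivative computation without algebraic slips, in particular correctly expanding $2A_1(A_1 + A_2 - A_1^2)$ and cancelling it against $zA_1' + zA_2'$; conceptually there is nothing beyond Theorem~\ref{8 secondorder1}.
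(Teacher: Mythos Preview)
Your proof is correct and follows exactly the same route as the paper, namely the substitution $p(z)=zf'(z)/f(z)$ into Theorem~\ref{8 secondorder1}; you have simply supplied the differentiation identities $zp'=A_1+A_2-A_1^2$ and $z^2p''=A_3+2A_2+2A_1^3-2A_1^2-3A_1A_2$ that the paper leaves implicit.
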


\begin{theorem}
Suppose $\alpha_1$, $\alpha_2>0$ and ${\alpha_1}^2-2\alpha_1 \alpha_2+{\alpha_2}^2-2e\alpha_1+2e\alpha_2\geq e^2$.  
Consider the analytic function $p$ in $\mathbb{D}$ such that $p(0)=1$ and
   $$   1+\alpha_1 zp'(z)+\alpha_2 z^2 p''(z)\prec \sqrt{1+z}$$
implies $p(z)\prec e^z$.
\end{theorem}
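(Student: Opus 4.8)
The plan is to run the argument of Theorem~\ref{8 secondorder1} with the Janowski disc replaced by $\Omega:=h(\mathbb{D})$, where $h(z)=\sqrt{1+z}$, and then to apply Lemma~\ref{8 theorem1}. Here $h$ maps $\mathbb{D}$ univalently onto the right-hand loop of Bernoulli's lemniscate, i.e. the component of $\{w\in\mathbb{C}:|w^{2}-1|<1\}$ containing $1$; the only feature of $\Omega$ I will use is that $|w^{2}-1|\geq 1$ forces $w\notin\Omega$. As before, put $\xi(\eta_{1},\eta_{2},\eta_{3};z):=1+\alpha_{1}\eta_{2}+\alpha_{2}\eta_{3}$, so the hypothesis reads $\xi(p(z),zp'(z),z^{2}p''(z);z)\in\Omega$ on $\mathbb{D}$. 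By Lemma~\ref{8 theorem1} it is enough to show $\xi\in\Psi[\Omega,\Delta_{e}]$, that is, $\xi(r,s,t;z)\notin\Omega$ whenever $r=e^{e^{i\theta}}$, $s=m\,e^{i\theta}e^{e^{i\theta}}$ and $\RE(1+t/s)\geq m(1+l(\theta))$ with $\theta\in[0,2\pi]$ and $m\geq 1$.

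First I would observe that, writing $w=\xi(r,s,t;z)$, one has $|w^{2}-1|\geq|w|^{2}-1$, so it suffices to prove $|w|\geq\sqrt{2}$; and since $|w|=|1+\alpha_{1}s+\alpha_{2}t|\geq|\alpha_{1}s+\alpha_{2}t|-1$, it suffices to prove $|\alpha_{1}s+\alpha_{2}t|\geq 1+\sqrt{2}$. Next, exactly as in the proof of Theorem~\ref{8 secondorder1} (factor out $\alpha_{1}s$, use $|\cdot|\geq\RE(\cdot)$, the admissibility inequality in the form $\RE(t/s)\geq m(1+l(\theta))-1$ multiplied by the positive number $\alpha_{2}/\alpha_{1}$, and $|s|=m\,b(\theta)=m\,e^{\cos\theta}$ from \eqref{8 1} and \eqref{8 3}), one arrives at
\begin{equation*}
|\alpha_{1}s+\alpha_{2}t|\ \geq\ m\,e^{\cos\theta}\bigl(\alpha_{1}-\alpha_{2}+\alpha_{2}\,m(1+\cos\theta)\bigr).
\end{equation*}

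It then remains to minimise the right-hand side over $m\geq 1$ and $\theta\in[0,2\pi]$. Working in the regime $\alpha_{1}\geq\alpha_{2}$ (the branch of the hypothesis with $\alpha_{1}-\alpha_{2}>0$; this is the same tacit restriction as in Theorem~\ref{8 secondorder1}), the bracket is nonnegative and $1+\cos\theta\geq 0$, so the expression is nondecreasing in $m$ and minimal at $m=1$, where it equals $e^{\cos\theta}(\alpha_{1}+\alpha_{2}\cos\theta)$; a one–variable check in $\cos\theta\in[-1,1]$ shows this is increasing, hence minimal at $\cos\theta=-1$, giving $|\alpha_{1}s+\alpha_{2}t|\geq e^{-1}(\alpha_{1}-\alpha_{2})$. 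Thus the required admissibility holds as soon as $e^{-1}(\alpha_{1}-\alpha_{2})\geq 1+\sqrt{2}$, i.e. $\alpha_{1}-\alpha_{2}\geq e(1+\sqrt{2})$; equivalently (for $\alpha_{1}-\alpha_{2}>0$) $(\alpha_{1}-\alpha_{2})^{2}-2e(\alpha_{1}-\alpha_{2})-e^{2}\geq 0$, which is precisely $\alpha_{1}^{2}-2\alpha_{1}\alpha_{2}+\alpha_{2}^{2}-2e\alpha_{1}+2e\alpha_{2}\geq e^{2}$. Hence $\xi\in\Psi[\Omega,\Delta_{e}]$ and Lemma~\ref{8 theorem1} gives $p(z)\prec e^{z}$.

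The hard part is the optimisation step: one must check that factoring out $\alpha_{1}s$ (rather than $\alpha_{2}t$) is the right normalisation, that the passage to $m=1$ genuinely uses $\alpha_{1}\geq\alpha_{2}$, and that the minimum over $\theta$ is attained at $\theta=\pi$ — equivalently, one must pin down the branch of the stated quadratic, namely $\alpha_{1}-\alpha_{2}\geq e(1+\sqrt{2})$. A minor point worth a line is why testing $|w^{2}-1|\geq 1$ already yields $w\notin\Omega$, even though the lemniscate region has two loops: the two loops together constitute $\{|w^{2}-1|<1\}$, so $\{|w^{2}-1|\geq 1\}$ avoids both.
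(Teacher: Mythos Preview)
Your proof is correct and follows essentially the same route as the paper: define $\xi(\eta_1,\eta_2,\eta_3;z)=1+\alpha_1\eta_2+\alpha_2\eta_3$, verify $\xi\in\Psi[\Omega,\Delta_e]$ by bounding $|\xi(r,s,t;z)^{2}-1|$ from below via the estimate $|\alpha_1 s+\alpha_2 t|\geq e^{-1}(\alpha_1-\alpha_2)$ (obtained exactly as in Theorem~\ref{8 secondorder1}), and then invoke Lemma~\ref{8 theorem1}. The only cosmetic difference is that the paper factors $(1+u)^{2}-1=u(u+2)$ and writes $|u(u+2)|\geq |u|(|u|-2)$ in one step, whereas you split this as $|w^{2}-1|\geq |w|^{2}-1$ together with $|w|\geq |u|-1$; both chains yield the same bound $|u|^{2}-2|u|\geq 1$, hence the same threshold $\alpha_1-\alpha_2\geq e(1+\sqrt{2})$.
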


\begin{proof}
Suppose $h(z)=\sqrt{1+z}$ for $z\in \mathbb{D}$ and $h(\mathbb{D})=\Omega:=\{\delta\in \mathbb{C}:|\delta^2-1|<1\}$. Let us define $\xi:\mathbb{C}^3\times \mathbb{D}\rightarrow \mathbb{C}$ as $\xi(\eta_1,\eta_2,\eta_3;z)=1+\alpha_1 \eta_2+\alpha_2 \eta_3$. It is clear that $\xi\in \Psi[\Omega,\Delta_e]$ if $\xi(r,s,t,;z)\notin \Omega$ for $z\in \mathbb{D}$. Note that
\begin{align*}
        |(\xi(r,s,t;z))^2-1|&=|(1+\alpha_1 s+\alpha_2 t)^2-1|\\
        &\geq |\alpha_1 s+\alpha_2 t|(|\alpha_1 s+\alpha_2 t|-2)\\
        &\geq |\alpha_1 s|\RE\bigg(1+\frac{\alpha_2}{\alpha_1}\frac{t}{s}\bigg)\bigg(|\alpha_1 s|\RE\bigg(1+\frac{\alpha_2}{\alpha_1}\frac{t}{s}\bigg)-2\bigg)
\end{align*}
Using the fact that $m\geq 1$ and analogous to Theorem \ref{8 secondorder1}, we have
\begin{align*}
       |(\xi(r,s,t;z))^2-1|&\geq \frac{1}{e}\bigg(\alpha_1-\alpha_2\bigg) \bigg(\frac{1}{e}\bigg(\alpha_1-\alpha_2\bigg)-2\bigg)\\
       &=\frac{{\alpha_1}^2-2\alpha_1 \alpha_2+{\alpha_2}^2-2e\alpha_1+2e\alpha_2}{e^2}\\
       &\geq 1.
\end{align*}
Therefore, $\xi\in \Psi[\Omega,\Delta_e]$ and thus $p(z)\prec e^z$ as a consequence of Lemma \ref{8 theorem1}.
\end{proof}

\begin{corollary}
By considering $p(z)=zf'(z)/f(z)$ in above theorem and through \eqref{8 corollary21}, we deduce that $f(z)\in \mathcal{S}^{*}_e$ if
    $Y_{f}(z)\prec \sqrt{1+z}$ and ${\alpha_1}^2-2\alpha_1 \alpha_2+{\alpha_2}^2-2e\alpha_1+2e\alpha_2\geq e^2$.
\end{corollary}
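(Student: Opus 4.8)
The plan is to obtain this corollary as an immediate specialization of the preceding theorem (the one with $h(z)=\sqrt{1+z}$) under the substitution $p(z)=zf'(z)/f(z)$. First I would record that for $f\in\mathcal{A}$ with $f(z)\neq 0$ on $\mathbb{D}\setminus\{0\}$, the function $p(z)=zf'(z)/f(z)$ is analytic on $\mathbb{D}$ and satisfies $p(0)=1$, so $p\in\mathcal{H}[1,n]$ for some $n\ge 1$; this is exactly the class to which Lemma \ref{8 theorem1} applies. Thus it remains only to rewrite the differential expression $1+\alpha_1 zp'(z)+\alpha_2 z^2p''(z)$ in terms of $f$.

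The core step is a direct differentiation. Writing $A_1=zf'/f$, $A_2=z^2f''/f$ and $A_3=z^3f'''/f$ as in the statement, differentiating $p=zf'/f$ by the quotient rule and multiplying by $z$ gives $zp'(z)=A_1+A_2-A_1^2$; differentiating once more and multiplying by $z^2$, then collecting the terms that arise from $f'/f$, $f''/f$, $f'''/f$ and their products, gives $z^2p''(z)=A_3+2A_2-2A_1^2-3A_1A_2+2A_1^3$. Substituting these two identities into $1+\alpha_1 zp'(z)+\alpha_2 z^2p''(z)$ reproduces precisely the expression $Y_f(z)$ defined in \eqref{8 corollary21}. (This computation is identical to the one already carried out for Corollary \ref{8 a1a2a3}, so it can simply be quoted.)

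With the identity $1+\alpha_1 zp'(z)+\alpha_2 z^2p''(z)=Y_f(z)$ in hand, the hypothesis $Y_f(z)\prec\sqrt{1+z}$ reads $1+\alpha_1 zp'(z)+\alpha_2 z^2p''(z)\prec\sqrt{1+z}$, and the arithmetic assumption ${\alpha_1}^2-2\alpha_1\alpha_2+{\alpha_2}^2-2e\alpha_1+2e\alpha_2\ge e^2$ is exactly the numerical hypothesis of the preceding theorem. Applying that theorem yields $p(z)\prec e^z$, that is, $zf'(z)/f(z)\prec e^z$, which is the definition of $f\in\mathcal{S}^{*}_e$.

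There is no genuine analytic obstacle here: the corollary is purely a translation of the theorem into the language of the logarithmic derivative. The only point needing a little care is the bookkeeping in the second-derivative computation — correctly tracking the cross terms $A_1^2$, $A_1A_2$ and $A_1^3$ produced by repeated quotient-rule differentiation — together with the (standard, usually tacit) requirement that $f$ not vanish on $\mathbb{D}\setminus\{0\}$ so that $p$ is analytic in the first place.
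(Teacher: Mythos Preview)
Your proposal is correct and matches the paper's approach exactly: the paper gives no separate proof for this corollary, treating it as immediate from the preceding theorem via the substitution $p(z)=zf'(z)/f(z)$ and the identity $1+\alpha_1 zp'(z)+\alpha_2 z^2p''(z)=Y_f(z)$ already recorded in \eqref{8 corollary21}. Your derivative computations for $zp'$ and $z^2p''$ are accurate and reproduce $Y_f$ precisely.
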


Next, we consider $h(z)=2/(1+e^{-z})$ and $z+\sqrt{1+z^2}$ in the coming results.

\begin{theorem}\label{8 thmsigmoid}
Suppose $\alpha_1$, $\alpha_2>0$ and $\alpha_1-\alpha_2\geq e r_0$, where $r_0\approx 0.546302$ is the positive root of the equation $r^2+2 \cot (1)r-1=0$. Consider the analytic function $p$ in $\mathbb{D}$ such that $p(0)=1$ and
$$1+\alpha_1 zp'(z)+\alpha_2 z^2p''(z)\prec \frac{2}{1+e^{-z}} $$      
implies $p(z)\prec e^z$.
\end{theorem}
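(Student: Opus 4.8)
The plan is to mimic the proof of Theorem \ref{8 secondorder1} almost verbatim, replacing the Janowski target domain by the image of $\mathbb{D}$ under the sigmoid function $h(z)=2/(1+e^{-z})$. First I would record the geometry of $\Omega=h(\mathbb{D})$: the sigmoid function is univalent on $\mathbb{D}$ with $h(0)=1$, and $h$ can be written as $h(z)=\tfrac12\bigl(1+\log((1+w)/(1-w))\cdot(\text{something})\bigr)$ — more precisely, the key fact is that $\delta\in\Omega$ if and only if one controls $|\log((1+\cdot)/(1-\cdot))|$-type quantities; this is exactly the content of Lemma \ref{prilemma42}, which tells us that $|\log((1+z)/(1-z))|\geq 1$ precisely on $|z|\geq r_0$. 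The connection is that $h(z)=2/(1+e^{-z})$ has the property $\log\bigl(h(z)/(2-h(z))\bigr)=z$, equivalently $\delta\notin\Omega$ (i.e. $\delta$ outside the sigmoid image) corresponds to $\bigl|\log(\delta/(2-\delta))\bigr|\geq 1$, and a further substitution $\delta=1+\omega$ converts $\delta/(2-\delta)$ into $(1+\omega)/(1-\omega)$, so that $\delta\notin\Omega$ iff $|\log((1+\omega)/(1-\omega))|\geq 1$, which by Lemma \ref{prilemma42} holds iff $|\omega|=|\delta-1|\geq r_0$. So the admissibility requirement ``$\xi(r,s,t;z)\notin\Omega$'' reduces to the single inequality $|\xi(r,s,t;z)-1|\geq r_0$.

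Next I would set $\xi(\eta_1,\eta_2,\eta_3;z)=1+\alpha_1\eta_2+\alpha_2\eta_3$ and estimate $|\xi(r,s,t;z)-1|=|\alpha_1 s+\alpha_2 t|$ from below along the boundary data $r=e^{e^{i\theta}}$, $s=m e^{i\theta}e^{e^{i\theta}}$, $\RE(1+t/s)\geq m(1+l(\theta))$ with $l(\theta)=\cos\theta$, exactly as in Theorem \ref{8 secondorder1}. Writing $|\alpha_1 s+\alpha_2 t|\geq |\alpha_1 s|\,\RE(1+\tfrac{\alpha_2}{\alpha_1}\tfrac{t}{s})\geq m\alpha_1 b(\theta)\,\RE(1+\tfrac{\alpha_2}{\alpha_1}(ml(\theta)+m-1))$ with $b(\theta)=e^{\cos\theta}$, and then using $m\geq 1$ to drop to the worst case, the chain collapses to $\alpha_1 b(\theta)(1+\tfrac{\alpha_2}{\alpha_1}l(\theta))=\alpha_1 e^{\cos\theta}+\alpha_2\cos\theta\, e^{\cos\theta}$; minimizing over $\theta$ (the minimum occurs at $\theta=\pi$, where $\cos\theta=-1$, giving $(\alpha_1-\alpha_2)/e$) yields $|\xi(r,s,t;z)-1|\geq (\alpha_1-\alpha_2)/e$. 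Then the hypothesis $\alpha_1-\alpha_2\geq e r_0$ gives $|\xi(r,s,t;z)-1|\geq r_0$, hence $\xi(r,s,t;z)\notin\Omega$, so $\xi\in\Psi[\Omega,\Delta_e]$, and Lemma \ref{8 theorem1} delivers $p(z)\prec e^z$.

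The only genuinely non-routine step is the first one: correctly identifying that the complement of the sigmoid image $\{2/(1+e^{-z}):z\in\mathbb{D}\}$ is characterized by $|\delta-1|\geq r_0$ via Lemma \ref{prilemma42}. One must check carefully that the Möbius-type substitution $\delta\mapsto\log(\delta/(2-\delta))$ indeed maps $\partial\Omega$ onto $\{|\zeta|=1\}$ and that $\delta=1+\omega$ turns $\delta/(2-\delta)$ into $(1+\omega)/(1-\omega)$ — this is an elementary computation but it is the crux, since everything downstream is a carbon copy of the earlier theorem's estimate. The subsequent lower-bound estimate requires no new idea; it is the same use of $\RE(1+t/s)$, the same worst-case analysis at $\theta=\pi$, and the same appeal to $m\geq 1$. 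I would also note, as the paper does in its corollaries, that substituting $p(z)=zf'(z)/f(z)$ and using the expansion \eqref{8 corollary21} immediately yields the analogous sufficient condition for membership in $\mathcal{S}^{*}_e$.
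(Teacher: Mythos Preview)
Your proposal is correct and follows essentially the same route as the paper's proof: define $\xi(\eta_1,\eta_2,\eta_3;z)=1+\alpha_1\eta_2+\alpha_2\eta_3$, bound $|\alpha_1 s+\alpha_2 t|$ from below by $(\alpha_1-\alpha_2)/e\geq r_0$ via the same chain $|\alpha_1 s|\,\RE(1+\tfrac{\alpha_2}{\alpha_1}\tfrac{t}{s})\geq b(\theta)(\alpha_1+\alpha_2 l(\theta))$, and then invoke Lemma~\ref{prilemma42} on $(1+\omega)/(1-\omega)$ with $\omega=\alpha_1 s+\alpha_2 t$ to get $|\log(\xi/(2-\xi))|\geq 1$, i.e.\ $\xi\notin\Omega$. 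One small caveat: your phrasing ``$\delta\notin\Omega$ iff $|\delta-1|\geq r_0$'' overstates Lemma~\ref{prilemma42}, which only asserts that $|\omega|\geq r_0$ is \emph{sufficient} for $|\log((1+\omega)/(1-\omega))|\geq 1$ (the lemma is about minima on circles, not a pointwise equivalence); but since only this direction is needed, the argument goes through unchanged.
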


\begin{proof}
Suppose $h(z)=2/(1+e^{-z})$ and $h(\mathbb{D})=\Omega:=\{\delta\in \mathbb{C}:|\log (\delta/(2-\delta))|<1\}$. Define $\xi:\mathbb{C}^3\times \mathbb{D}\rightarrow \mathbb{C}$ as $\xi(\eta_1,\eta_2,\eta_3;z)=1+\alpha_1 \eta_2+\alpha_2 \eta_3$. We know that $\xi\in \Psi[\Omega, \Delta_e]$ only when $\xi(r,s,t;z)\notin \Omega$. First we consider,
\begin{align*}
    |\alpha_1 s+\alpha_2 t|&=\alpha_1 |s|\bigg|1+\frac{\alpha_2}{\alpha_1}\frac{t}{s}\bigg|\\
    &\geq \alpha_1 |s|\RE\bigg(1+\frac{\alpha_2}{\alpha_1}\frac{t}{s}\bigg)\\
    &\geq m \alpha_1 b(\theta)\bigg(1+\frac{\alpha_2}{\alpha_1}(ml(\theta)+m-1)\bigg).
\end{align*}
As $m\geq 1$, we obtain
\begin{align}\label{8 7}
    |\alpha_1 s+\alpha_2 t|&\geq b(\theta)(\alpha_1+\alpha_2 l(\theta))\nonumber\\
    &\geq \frac{1}{e}(\alpha_1-\alpha_2)\nonumber\\
    &\geq r_0.
\end{align}
Now, we consider
\begin{equation*}
    \bigg|\log \bigg(\frac{\xi(r,s,t;z)}{2-\xi(r,s,t;z)}\bigg)\bigg|=\bigg|\log\bigg(\frac{1+\alpha_1 s+\alpha_2 t}{1-(\alpha_1 s+\alpha_2 t)}\bigg)\bigg|.
\end{equation*}
Through Lemma \ref{prilemma42} and \eqref{8 7}, we have
\begin{equation*}
    \bigg|\log \bigg(\frac{1+\alpha_1 s+\alpha_2 t}{1-(\alpha_1 s+\alpha_2 t)}\bigg)\bigg|\geq 1,
\end{equation*}
which implies that $\xi\in \Psi[\Omega,\Delta_e]$. Therefore, $p(z)\prec e^z$ using Lemma \ref{8 theorem1}.
\end{proof}

\begin{theorem}
Suppose $\alpha_1$, $\alpha_2>0$ and $\alpha_1-\alpha_2\geq \sqrt{2}e$. Let $p$ be an analytic function in $\mathbb{D}$ such that $p(0)=1$ and
    $$    1+\alpha_1 zp'(z)+\alpha_2 z^2p''(z)\prec z+\sqrt{1+z^2}$$
implies $p(z)\prec e^z$.
\end{theorem}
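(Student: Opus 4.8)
The plan is to run the same admissibility–class argument used in Theorem~\ref{8 secondorder1} and in the preceding $\sqrt{1+z}$ result, the only new input being the geometry of the image domain $\Omega := h(\mathbb{D})$ for $h(z) = z + \sqrt{1+z^2}$. The first step is to pin down $\Omega$. Inverting $\delta = z + \sqrt{1+z^2}$ gives $z = (\delta^2 - 1)/(2\delta)$, so $|z| < 1$ corresponds to $|\delta^2 - 1| < 2|\delta|$; since $h$ is univalent and $h(0) = 1$, $\Omega$ is the lune-shaped component of this set containing $1$ (the lune described by Raina and Sok\'{o}\l\ \cite{raina}). Writing $1 + e^{2i\psi} = 2e^{i\psi}\cos\psi$ one finds $h(e^{i\psi}) - 1 = e^{i\psi/2}\bigl(2i\sin(\psi/2) + \sqrt{2\cos\psi}\,\bigr)$ for $|\psi| < \pi/2$ (with an analogous expression on the complementary arc), so that $|h(e^{i\psi}) - 1|^2 = 4\sin^2(\psi/2) + 2\cos\psi = 2$ there and $|h(e^{i\psi}) - 1| \le \sqrt 2$ for every $\psi$. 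By the maximum modulus principle applied to the nonconstant analytic function $h - 1$, this yields the inclusion I actually need, namely $\Omega \subseteq \{\delta : |\delta - 1| < \sqrt 2\,\}$.

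With $\Omega$ understood, I would set $\xi(\eta_1,\eta_2,\eta_3;z) := 1 + \alpha_1\eta_2 + \alpha_2\eta_3$ and verify $\xi \in \Psi[\Omega, \Delta_e]$, i.e.\ that $\xi(r,s,t;z) \notin \Omega$ whenever $r = e^{e^{i\theta}}$, $s = m e^{i\theta} e^{e^{i\theta}}$, $\RE(1 + t/s) \ge m(1 + l(\theta))$ and $m \ge 1$. Since $\xi(r,s,t;z) - 1 = \alpha_1 s + \alpha_2 t$ and $\Omega \subseteq \{|\delta - 1| < \sqrt 2\,\}$, it suffices to prove the single estimate $|\alpha_1 s + \alpha_2 t| \ge \sqrt 2$. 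This is obtained exactly as in Theorem~\ref{8 secondorder1}: from $|\alpha_1 s + \alpha_2 t| \ge \alpha_1 |s|\,\RE\bigl(1 + \tfrac{\alpha_2}{\alpha_1}\tfrac{t}{s}\bigr)$, together with $\RE(t/s) \ge m(1 + l(\theta)) - 1$, $m \ge 1$ and $1 + l(\theta) \ge 0$, one gets $|\alpha_1 s + \alpha_2 t| \ge b(\theta)\bigl(\alpha_1 + \alpha_2 l(\theta)\bigr)$ with $b, l$ as in \eqref{8 1} and \eqref{8 3}; minimizing $e^{c}(\alpha_1 + \alpha_2 c)$ over $c = \cos\theta \in [-1,1]$ (its derivative $e^{c}(\alpha_1 + \alpha_2(c+1))$ is positive) gives the value $(\alpha_1 - \alpha_2)/e$ at $c = -1$. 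Hence $|\alpha_1 s + \alpha_2 t| \ge (\alpha_1 - \alpha_2)/e \ge \sqrt 2$ by the hypothesis $\alpha_1 - \alpha_2 \ge \sqrt 2\,e$, so $\xi(r,s,t;z) \notin \{|\delta - 1| < \sqrt 2\,\} \supseteq \Omega$, and $\xi \in \Psi[\Omega, \Delta_e]$. An application of Lemma~\ref{8 theorem1} then gives $p(z) \prec e^z$.

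I expect the only genuinely non-mechanical point to be the first step: showing that $\Omega$ sits inside the disk $\{|\delta - 1| < \sqrt 2\,\}$. It is essential here that $\Omega = h(\mathbb{D})$ is only one component of $\{|\delta^2 - 1| < 2|\delta|\}$ — the full set is disconnected, and the estimate $|\xi - 1| \ge \sqrt 2$ by itself does not exclude $\xi$ from it (for instance $\xi = -2$ satisfies $|\xi - 1| = 3$ yet $|\xi^2 - 1| = 3 < 4 = 2|\xi|$), so one must use that $\Omega$ is the lune lying inside the circle $|\delta - 1| = \sqrt 2$. Once that inclusion is in hand, everything else is a verbatim repetition of the computations already carried out for the Janowski and $\sqrt{1+z}$ cases.
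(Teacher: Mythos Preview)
Your proof is correct and follows the same approach as the paper: show $\xi\in\Psi[\Omega,\Delta_e]$ by proving $|\xi(r,s,t;z)-1|=|\alpha_1 s+\alpha_2 t|\ge\sqrt{2}$ via the estimate $b(\theta)(\alpha_1+\alpha_2 l(\theta))\ge(\alpha_1-\alpha_2)/e$, and then invoke Lemma~\ref{8 theorem1}. Your treatment of the geometry of $\Omega$ is in fact more careful than the paper's---the paper simply reads off from Figure~\ref{crescent} that $\Omega$ lies inside the disk bounded by $C_1:|z-1|=\sqrt{2}$, whereas you supply an analytic verification via the boundary parametrization and the maximum modulus principle, and you correctly flag the subtlety (glossed over in the paper) that $\{|\delta^2-1|<2|\delta|\}$ has a second component near $-1$ not contained in that disk.
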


\begin{proof}
Suppose $h(z)=z+\sqrt{1+z^2}$ for $z\in \mathbb{D}$ and $h(\mathbb{D})=\Omega:=\{\delta\in \mathbb{C}:|\delta^2-1|<2|\delta|\}$. We define $\xi:\mathbb{C}^3\times \mathbb{D}\rightarrow \mathbb{C}$ as $\xi(\eta_1,\eta_2,\eta_3;z)=1+\alpha_1 \eta_2+\alpha_2 \eta_3$. For $\xi\in \Psi[\Omega, \Delta_e]$, we must have $\xi(r,s,t;z)\notin \Omega$. From the graph of $z+\sqrt{1+z^2}$ (see Fig. \ref{crescent}), we note that $\Omega$ is constructed by the circles $C_1$ and $C_2$, given by
     $$   C_1:|z-1|=\sqrt{2}\quad\text{and}\quad C_2:|z+1|=\sqrt{2}.$$
\begin{figure}[h]
\centering
   \includegraphics[width=8cm, height=4cm]{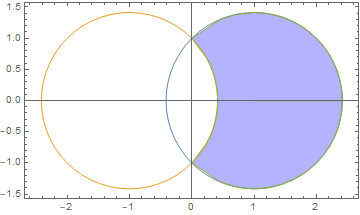}
 \caption{Graph of two circles, namely $C_1$ (blue boundary) and $C_2$ (orange boundary). While the shaded region (solid purple) represents $z+\sqrt{1+z^2}$.}
  \label{crescent}
\end{figure}
It is obvious that $\Omega$ contains the disk enclosed by $C_1$ and excludes the portion of the disk enclosed by $C_2\cap C_1$. We have
      $$  |\xi(r,s,t;z)-1|=|\alpha_1 s+\alpha_2 t|.$$
Analogous to Theorem \ref{8 thmsigmoid}, we have
\begin{align*}
       |\alpha_1 s+\alpha_2 t|&\geq b(\theta)(\alpha_1+\alpha_2 l(\theta))\\
       &\geq \frac{1}{e}(\alpha_1-\alpha_2 )\\
       &\geq \sqrt{2}.
\end{align*}
The fact that $\xi(r,s,t;z)\notin C_1$ suffices us to deduce that $\xi(r,s,t;z)\notin \Omega$. 
Consequently, $\xi\in \Psi[\Omega,\Delta_e]$ and thus $p(z)\prec e^z$ by using Lemma \ref{8 theorem1}.
\end{proof}

Now, if we choose $h(z)$ to be $1+\sin z$ and $1+ze^z$, we derive the conditions on $\alpha_1$ and $\alpha_2$ as follows:

\begin{theorem}
Suppose $\alpha_1$, $\alpha_2>0$ and $\alpha_1-\alpha_2\geq e\sinh 1$. Let $p$ be an analytic function in $\mathbb{D}$ such that $p(0)=1$ and
     $$   1+\alpha_1 zp'(z)+\alpha_2 z^2p''(z)\prec 1+\sin z$$
implies $p(z)\prec e^z$.
\end{theorem}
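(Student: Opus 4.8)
The plan is to mimic the structure of the preceding theorems exactly, since the proof mechanism is identical: identify the image region $\Omega=h(\mathbb{D})$ for $h(z)=1+\sin z$, define the admissible operator $\xi(\eta_1,\eta_2,\eta_3;z)=1+\alpha_1\eta_2+\alpha_2\eta_3$, and verify that the admissibility condition $\xi(r,s,t;z)\notin\Omega$ holds under the hypothesis $\alpha_1-\alpha_2\geq e\sinh 1$, so that Lemma~\ref{8 theorem1} applies and yields $p(z)\prec e^z$.

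First I would note that $1+\sin z$ maps $\mathbb{D}$ into a region contained in the disk $\{\delta\in\mathbb{C}:|\delta-1|<\sinh 1\}$, because $|\sin z|\le\sinh|z|<\sinh 1$ for $z\in\mathbb{D}$; equivalently one takes $\Omega=\{\delta:|\delta-1|<\sinh 1\}$ as the enclosing set (this is the standard choice used for $\mathcal{S}^*_S$ in the literature, and it suffices for a subordination \emph{implication} since we only need $\xi(r,s,t;z)\notin\Omega$). Then, exactly as in Theorem~\ref{8 thmsigmoid} and the crescent case, I would estimate
\[
|\xi(r,s,t;z)-1|=|\alpha_1 s+\alpha_2 t|\geq \alpha_1|s|\,\RE\!\left(1+\frac{\alpha_2}{\alpha_1}\frac{t}{s}\right)\geq m\alpha_1 b(\theta)\left(1+\frac{\alpha_2}{\alpha_1}(ml(\theta)+m-1)\right),
\]
and using $m\geq 1$ together with the bounds $b(\theta)=e^{\cos\theta}\ge e^{-1}$ and $l(\theta)=\cos\theta\ge -1$ from \eqref{8 1} and \eqref{8 3}, this reduces to
\[
|\alpha_1 s+\alpha_2 t|\geq b(\theta)(\alpha_1+\alpha_2 l(\theta))\geq \frac{1}{e}(\alpha_1-\alpha_2)\geq \sinh 1,
\]
the last step being precisely the hypothesis $\alpha_1-\alpha_2\geq e\sinh 1$. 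Hence $\xi(r,s,t;z)\notin\Omega$, so $\xi\in\Psi[\Omega,\Delta_e]$, and Lemma~\ref{8 theorem1} gives $p(z)\prec e^z$.

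The only subtle point — and the part I would be most careful about — is the reduction of the chain $b(\theta)(\alpha_1+\alpha_2 l(\theta))\ge \frac1e(\alpha_1-\alpha_2)$ to an inequality that holds uniformly in $\theta\in[0,2\pi]$: one must check that the function $\theta\mapsto e^{\cos\theta}(\alpha_1+\alpha_2\cos\theta)$ attains its minimum at $\theta=\pi$ for positive $\alpha_1,\alpha_2$ (so that the minimum equals $e^{-1}(\alpha_1-\alpha_2)$), which is what the authors implicitly use in the earlier proofs; differentiating shows the critical points satisfy $\alpha_1\sin\theta+\alpha_2\sin\theta\cos\theta+\alpha_2\sin\theta=0$-type relations and a short sign analysis confirms $\theta=\pi$ is the relevant minimizer when $\alpha_1\ge\alpha_2$ (and when $\alpha_1<\alpha_2$ the hypothesis $\alpha_1-\alpha_2\ge e\sinh1>0$ fails vacuously, so there is nothing to prove). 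Everything else is a routine transcription of the template already established in Theorem~\ref{8 secondorder1} and Theorem~\ref{8 thmsigmoid}, with $\sinh 1$ playing the role previously played by $r_0$ and $\sqrt 2$.
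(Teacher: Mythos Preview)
Your proof is correct and follows essentially the same route as the paper's: define $\xi(\eta_1,\eta_2,\eta_3;z)=1+\alpha_1\eta_2+\alpha_2\eta_3$, use the containment of $h(\mathbb{D})$ in the disk $\{w:|w-1|<\sinh 1\}$ (the paper cites \cite[Lemma~3.3]{chosine} for this, while you give the direct estimate $|\sin z|\le\sinh|z|$), and run the identical inequality chain from Theorem~\ref{8 thmsigmoid} to obtain $|\alpha_1 s+\alpha_2 t|\ge e^{-1}(\alpha_1-\alpha_2)\ge\sinh 1$, concluding via Lemma~\ref{8 theorem1}. Your added remark that the minimum of $e^{\cos\theta}(\alpha_1+\alpha_2\cos\theta)$ over $\theta$ is attained at $\theta=\pi$ (under the standing hypothesis $\alpha_1>\alpha_2$) is a helpful justification the paper leaves implicit.
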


\begin{proof}
Suppose $h(z)=1+\sin z$ for $z\in \mathbb{D}$ and $h(\mathbb{D})=\Omega:=\{\delta\in \mathbb{C}:|\arcsin (\delta-1)|<1\}$. Define $\xi:\mathbb{C}^3\times \mathbb{D}\rightarrow \mathbb{C}$ as $\xi(\eta_1,\eta_2,\eta_3;z)=1+\alpha_1 \eta_2+\alpha_2 \eta_3$. For $\xi\in \Psi[\Omega, \Delta_e]$, we must have $\xi(r,s,t;z)\notin \Omega$. Through \cite[Lemma 3.3]{chosine}, we note that the smallest disk containing $\Omega$ is $\{w\in \mathbb{C}: |w-1|<\sinh 1\}$. So,
     $$   |\xi(r,s,t;z)-1|=|\alpha_1 s+\alpha_2 t|.$$
Analogous to Theorem \ref{8 thmsigmoid}, we have
\begin{align*}
       |\alpha_1 s+\alpha_2 t|&\geq b(\theta)(\alpha_1+\alpha_2 l(\theta))\\
       &\geq \frac{1}{e}(\alpha_1-\alpha_2)\\
       &\geq \sinh 1.
\end{align*}
Clearly, $\xi(r,s,t;z)\notin\{w: |w-1|<\sinh 1\}$ which is enough to conclude that $\xi(r,s,t;z)\notin \Omega$. Therefore, $\xi\in \Psi[\Omega,\Delta_e]$ and thus $p(z)\prec e^z$ through Lemma \ref{8 theorem1}.
\end{proof}

\begin{theorem}
Suppose $\alpha_1$, $\alpha_2>0$ and $\alpha_1-\alpha_2\geq e^2$. Consider the analytic function $p$ in $\mathbb{D}$ such that $p(0)=1$ and
      $$  1+\alpha_1 zp'(z)+\alpha_2 z^2p''(z)\prec 1+ze^z$$
implies $p(z)\prec e^z$.
\end{theorem}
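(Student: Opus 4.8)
The plan is to follow verbatim the template used in the preceding theorems: take $q(z)=e^z$, identify $\Omega = h(\mathbb{D})$ where $h(z)=1+ze^z$, define the admissible operator $\xi(\eta_1,\eta_2,\eta_3;z)=1+\alpha_1\eta_2+\alpha_2\eta_3$, and verify $\xi\in\Psi[\Omega,\Delta_e]$ by showing $\xi(r,s,t;z)\notin\Omega$ for all admissible triples $(r,s,t)$; then Lemma \ref{8 theorem1} gives $p(z)\prec e^z$. The only genuinely new ingredient is a description of $\Omega=h(\mathbb{D})$ good enough to certify that a point lying far enough from $1$ is outside $\Omega$. Since $h$ is the Ma--Minda function of the class $\mathcal{S}^*_\varrho$ from Table \ref{8 table1} (Kumar and Kamaljeet \cite{kumar-ganganiaCardioid-2021}), the relevant geometric fact is that $h(\mathbb{D})$ is contained in the disk centered at $1$ of radius $e$: indeed $|h(z)-1| = |z|\,|e^z| \le |z|\,e^{|z|} < e$ for $|z|<1$, so $\Omega \subseteq \{w\in\mathbb{C}:|w-1|<e\}$. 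Hence it suffices to show $|\xi(r,s,t;z)-1|\ge e$.

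The computation of $|\xi(r,s,t;z)-1|$ is exactly the one carried out in Theorems \ref{8 thmsigmoid} and the subsequent ones: writing $s=m\zeta q'(\zeta)=me^{i\theta}e^{e^{i\theta}}$ and using the admissibility bound $\RE(1+t/s)\ge m(1+l(\theta))$ with $m\ge 1$, one gets
\begin{align*}
|\xi(r,s,t;z)-1| = |\alpha_1 s+\alpha_2 t| = \alpha_1|s|\,\Bigl|1+\tfrac{\alpha_2}{\alpha_1}\tfrac{t}{s}\Bigr|
&\ge \alpha_1|s|\,\RE\Bigl(1+\tfrac{\alpha_2}{\alpha_1}\tfrac{t}{s}\Bigr)\\
&\ge m\,\alpha_1 b(\theta)\Bigl(1+\tfrac{\alpha_2}{\alpha_1}(ml(\theta)+m-1)\Bigr)\\
&\ge b(\theta)\bigl(\alpha_1+\alpha_2 l(\theta)\bigr)\ge \tfrac{1}{e}(\alpha_1-\alpha_2),
\end{align*}
where $b(\theta)=e^{\cos\theta}$, $l(\theta)=\cos\theta$, and the last two inequalities use that $b(\theta)(\alpha_1+\alpha_2 l(\theta))=e^{\cos\theta}(\alpha_1+\alpha_2\cos\theta)$ attains its minimum $e^{-1}(\alpha_1-\alpha_2)$ at $\theta=\pi$ (this is the same minimization already invoked in \eqref{8 7}, valid because $\alpha_1>\alpha_2>0$, which follows from the hypothesis $\alpha_1-\alpha_2\ge e^2>0$ together with $\alpha_2>0$). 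Under the standing hypothesis $\alpha_1-\alpha_2\ge e^2$ we conclude $|\xi(r,s,t;z)-1|\ge e$, so $\xi(r,s,t;z)\notin\{w:|w-1|<e\}\supseteq\Omega$, hence $\xi\in\Psi[\Omega,\Delta_e]$ and Lemma \ref{8 theorem1} finishes the proof.

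The only step that is not completely mechanical is justifying the enclosure $\Omega\subseteq\{w:|w-1|<e\}$; everything else is a copy of the earlier arguments. I expect that to be a one-line estimate ($|ze^z|\le |z|e^{|z|}<e$ on $\mathbb{D}$), so there is no serious obstacle — the bound $e^2$ in the hypothesis is precisely $e$ (the radius of the enclosing disk) divided by the minimal value $1/e$ of $b(\theta)(\alpha_1+\alpha_2 l(\theta))/(\alpha_1-\alpha_2)$. If one wanted the sharp constant one would need the exact radius of the smallest disk centered at $1$ containing the cardioid-like region $h(\mathbb{D})$, which is slightly smaller than $e$; but for the stated (non-sharp) form, the crude bound suffices.
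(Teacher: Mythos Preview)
Your proposal is correct and follows essentially the same approach as the paper: define $\xi(\eta_1,\eta_2,\eta_3;z)=1+\alpha_1\eta_2+\alpha_2\eta_3$, use the containment $\Omega\subseteq\{w:|w-1|<e\}$, and then repeat the estimate $|\alpha_1 s+\alpha_2 t|\ge e^{-1}(\alpha_1-\alpha_2)\ge e$ from Theorem~\ref{8 thmsigmoid} to conclude via Lemma~\ref{8 theorem1}. The only cosmetic difference is that the paper cites \cite[Lemma~3.3]{kumar-ganganiaCardioid-2021} for the disk containment, whereas you supply the one-line estimate $|ze^{z}|\le|z|e^{|z|}<e$ directly; your closing remark that the smallest enclosing disk might have radius ``slightly smaller than $e$'' is inaccurate (the supremum of $|ze^{z}|$ over $\mathbb{D}$ is exactly $e$, approached as $z\to 1^-$), but this side comment does not affect the proof.
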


\begin{proof}
Suppose $h(z)=1+ze^z$ for $z\in \mathbb{D}$ and $\Omega=h(\mathbb{D})$. Let us define $\xi:\mathbb{C}^3\times \mathbb{D}\rightarrow \mathbb{C}$ as $\xi(\eta_1,\eta_2,\eta_3;z)=1+\alpha_1 \eta_2+\alpha_2 \eta_3$. For $\xi\in \Psi[\Omega, \Delta_e]$, we need to have $\xi(r,s,t;z)\notin \Omega$. Through \cite[Lemma 3.3]{kumar-ganganiaCardioid-2021},
we observe that the smallest disk containing $\Omega$ is $\{w\in \mathbb{C}: |w-1|<e\}$. Thus
      $$  |\xi(r,s,t;z)-1|=|\alpha_1 s+\alpha_2 t|.$$
Analogous to Theorem \ref{8 thmsigmoid}, we have
\begin{align*}
       |\alpha_1 s+\alpha_2 t|&\geq b(\theta)(\alpha_1+\alpha_2 l(\theta))\\
       &\geq \frac{1}{e}(\alpha_1-\alpha_2)\\
       &\geq e.
\end{align*}
Clearly, $\xi(r,s,t;z)\notin\{w\in \mathbb{C}: |w-1|<e\}$ which suffices us to conclude that $\xi(r,s,t;z)\notin \Omega$. Therefore, $\xi\in \Psi[\Omega,\Delta_e]$ and thus $p(z)\prec e^z$ by using Lemma \ref{8 theorem1}.
\end{proof}

Now, we conclude this section by choosing $h(z)$ to be $1+\sinh^{-1}z$ and $e^z$ itself, followed by its subsequent corollary.
\begin{theorem}
Suppose $\alpha_1$, $\alpha_2>0$ and $2(\alpha_1-\alpha_2 )\geq \pi e$. Consider the analytic function $p$ in $\mathbb{D}$ such that $p(0)=1$ and
     $$   1+\alpha_1 zp'(z)+\alpha_2 z^2p''(z)\prec 1+\sinh^{-1} z$$
implies $p(z)\prec e^z$.
\end{theorem}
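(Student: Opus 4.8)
The plan is to follow verbatim the scheme of Theorem \ref{8 thmsigmoid} and the three theorems after it. Take $h(z)=1+\sinh^{-1}z$, set $\Omega:=h(\mathbb{D})$, define $\xi:\mathbb{C}^{3}\times\mathbb{D}\rightarrow\mathbb{C}$ by $\xi(\eta_1,\eta_2,\eta_3;z)=1+\alpha_1\eta_2+\alpha_2\eta_3$, and show $\xi\in\Psi[\Omega,\Delta_e]$ by checking that $\xi(r,s,t;z)\notin\Omega$ for every admissible triple; then Lemma \ref{8 theorem1} delivers $p(z)\prec e^{z}$. The only new ingredient compared with the earlier proofs is the geometry of $\Omega$. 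I would use that $\Omega$ is symmetric about the real axis and about the line $\RE w=1$ (because $\sinh^{-1}$ is odd with real Taylor coefficients), so its smallest enclosing disk is centered at $1$, and that $|\sinh^{-1}z|<\pi/2$ on $\mathbb{D}$ with the value $\pi/2$ only approached as $z\to\pm i$. Indeed, writing $w=\sinh^{-1}z$, the image of $\mathbb{D}$ is the component of $\{w:\sinh^{2}(\RE w)+\sin^{2}(\IM w)<1\}$ containing the origin, which lies in the strip $|\IM w|<\pi/2$ and on which $|w|$ is largest along the imaginary axis; alternatively this smallest-disk description is available in Arora and Kumar \cite{kush}. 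Hence $\Omega\subseteq\{w\in\mathbb{C}:|w-1|<\pi/2\}$.

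Next I would estimate $|\xi(r,s,t;z)-1|=|\alpha_1 s+\alpha_2 t|$ exactly as in Theorem \ref{8 thmsigmoid}: with $r=e^{e^{i\theta}}$, $s=m e^{i\theta}e^{e^{i\theta}}$ and $\RE(1+t/s)\geq m(1+l(\theta))$,
\begin{align*}
|\alpha_1 s+\alpha_2 t| &=\alpha_1|s|\bigg|1+\frac{\alpha_2}{\alpha_1}\frac{t}{s}\bigg|\geq \alpha_1|s|\,\RE\bigg(1+\frac{\alpha_2}{\alpha_1}\frac{t}{s}\bigg)\\
&\geq m\alpha_1 b(\theta)\bigg(1+\frac{\alpha_2}{\alpha_1}\big(ml(\theta)+m-1\big)\bigg)\geq b(\theta)\big(\alpha_1+\alpha_2 l(\theta)\big),
\end{align*}
the last inequality using $m\geq1$ together with $1+l(\theta)\geq0$. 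Since $b(\theta)=e^{\cos\theta}$ and $l(\theta)=\cos\theta$, the function $c\mapsto e^{c}(\alpha_1+\alpha_2 c)$ has derivative $e^{c}\big(\alpha_1+\alpha_2(c+1)\big)>0$ on $[-1,1]$, so it is increasing there and its minimum is $e^{-1}(\alpha_1-\alpha_2)$, attained at $\theta=\pi$. Therefore $|\xi(r,s,t;z)-1|\geq \frac{1}{e}(\alpha_1-\alpha_2)\geq \frac{\pi}{2}$ by the hypothesis $2(\alpha_1-\alpha_2)\geq\pi e$, whence $\xi(r,s,t;z)\notin\{w:|w-1|<\pi/2\}\supseteq\Omega$. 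This shows $\xi\in\Psi[\Omega,\Delta_e]$, and Lemma \ref{8 theorem1} finishes the argument.

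The modulus computation here is a word-for-word transcription of the one in Theorem \ref{8 thmsigmoid}, so I expect the only point requiring genuine care to be the geometric claim that the smallest disk about $1$ containing $1+\sinh^{-1}(\mathbb{D})$ has radius exactly $\pi/2$. In particular one must check that the extremal modulus $\pi/2$ arises from the boundary behaviour at $z=\pm i$ and not from the real segment (where $\sinh^{-1}$ only reaches $\log(1+\sqrt{2})\approx0.881$), and that no strictly smaller, possibly off-center, disc does better; the double symmetry of $\Omega$ pins the center at $1$, and the monotonicity of $c\mapsto e^{c}(\alpha_1+\alpha_2 c)$ pins down the worst case $\theta=\pi$ in the estimate, so these two observations together close the gap.
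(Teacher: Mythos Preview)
Your proof is correct and follows essentially the same route as the paper's: define $\xi(\eta_1,\eta_2,\eta_3;z)=1+\alpha_1\eta_2+\alpha_2\eta_3$, use the containment $\Omega\subseteq\{w:|w-1|<\pi/2\}$ (which the paper also takes from \cite{kush}), repeat the modulus estimate of Theorem \ref{8 thmsigmoid} to get $|\xi(r,s,t;z)-1|\geq e^{-1}(\alpha_1-\alpha_2)\geq\pi/2$, and finish with Lemma \ref{8 theorem1}. A minor remark: you only need the \emph{containment} $\Omega\subseteq\{|w-1|<\pi/2\}$, not that this is the smallest enclosing disk, so the discussion of off-center discs in your final paragraph is unnecessary for the argument.
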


\begin{proof}
Suppose $h(z)=1+\sinh^{-1} z$ for $z\in \mathbb{D}$ and $h(\mathbb{D})=\Omega:=\{\delta\in\mathbb{C}:|\sinh (\delta-1)|<1\}$. We define $\xi:\mathbb{C}^3\times \mathbb{D}\rightarrow \mathbb{C}$ as $\xi(\eta_1,\eta_2,\eta_3;z)=1+\alpha_1 \eta_2+\alpha_2 \eta_3$. For $\xi\in \Psi[\Omega, \Delta_e]$, we must have $\xi(r,s,t;z)\notin \Omega$. Through \cite[Remark 2.7]{kush}, we note that the disk $\{w\in\mathbb{C}: |w-1|<\pi/2\}$ is the smallest disk containing $\Omega$. So,
     $$   |\xi(r,s,t;z)-1|=|\alpha_1 s+\alpha_2 t|.$$
Analogous to Theorem \ref{8 thmsigmoid}, we have
\begin{align*}
       |\alpha_1 s+\alpha_2 t|&\geq b(\theta)(\alpha_1+\alpha_2 l(\theta))\\
       &\geq \frac{1}{e}(\alpha_1-\alpha_2)\\
       &\geq \frac{\pi}{2}.
\end{align*}
Clearly, $\xi(r,s,t;z)$ does not belong to the disk $\{w\in \mathbb{C}: |w-1|<\pi/2\}$ which suffices us to conclude that $\xi(r,s,t;z)\notin \Omega$. Therefore, $\xi\in \Psi[\Omega,\Delta_e]$ and thus $p(z)\prec e^z$ by using Lemma \ref{8 theorem1}.
\end{proof}

\begin{theorem}
Suppose $\alpha_1$, $\alpha_2>0$ and $\alpha_1-\alpha_2\geq e(e-1)$. Consider the analytic function $p$ in $\mathbb{D}$ such that $p(0)=1$ and
     $$   1+\alpha_1 zp'(z)+\alpha_2 z^2p''(z)\prec e^z$$
implies $p(z)\prec e^z$.
\end{theorem}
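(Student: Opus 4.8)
The plan is to invoke Lemma~\ref{8 theorem1} with $h(z)=e^z$, so that $\Omega=h(\mathbb{D})=\Delta_e=\{\delta\in\mathbb{C}:|\log\delta|<1\}$, and with the operator $\xi:\mathbb{C}^3\times\mathbb{D}\to\mathbb{C}$ given by $\xi(\eta_1,\eta_2,\eta_3;z)=1+\alpha_1\eta_2+\alpha_2\eta_3$. The entire proof then reduces to checking the admissibility condition $\xi\in\Psi[\Omega,\Delta_e]$, i.e.\ that $\xi(r,s,t;z)\notin\Delta_e$ whenever $r=e^{e^{i\theta}}$, $s=me^{i\theta}e^{e^{i\theta}}$ and $\RE(1+t/s)\geq m(1+l(\theta))$ for $\theta\in[0,2\pi]$ and $m\geq1$. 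Since $\xi(r,s,t;z)=1+(\alpha_1 s+\alpha_2 t)$, Lemma~\ref{prilemma4} tells us that $\xi(r,s,t;z)\notin\Delta_e$ as soon as $|\alpha_1 s+\alpha_2 t|\geq e-1$, so everything comes down to this single modulus estimate.

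For that estimate I would follow exactly the chain of inequalities used in Theorem~\ref{8 thmsigmoid}. Writing $|s|=mb(\theta)$ with $b(\theta)=e^{\cos\theta}$, using $|1+w|\geq\RE(1+w)$, the admissibility hypothesis $\RE(1+t/s)\geq m(1+l(\theta))$ (equivalently $\RE(t/s)\geq ml(\theta)+m-1$), and $\alpha_2/\alpha_1>0$, one obtains
\[
|\alpha_1 s+\alpha_2 t|\;\geq\;\alpha_1|s|\,\RE\!\left(1+\frac{\alpha_2}{\alpha_1}\frac{t}{s}\right)\;\geq\;m\alpha_1 b(\theta)\left(1+\frac{\alpha_2}{\alpha_1}\bigl(ml(\theta)+m-1\bigr)\right).
\]
Because $m\geq1$ and $\alpha_1>\alpha_2$ (a consequence of the standing hypothesis $\alpha_1-\alpha_2\geq e(e-1)>0$), the right-hand side is bounded below by $b(\theta)(\alpha_1+\alpha_2 l(\theta))=e^{\cos\theta}(\alpha_1+\alpha_2\cos\theta)$. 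Regarding this as a function of $x=\cos\theta\in[-1,1]$, its derivative $e^{x}(\alpha_1+\alpha_2+\alpha_2 x)$ is strictly positive on $[-1,1]$, so its minimum occurs at $x=-1$ and equals $\tfrac{1}{e}(\alpha_1-\alpha_2)$; by hypothesis this is at least $e-1$, which is precisely what is needed.

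Combining the two steps gives $\xi(r,s,t;z)\notin\Delta_e=\Omega$ for every admissible triple, hence $\xi\in\Psi[\Omega,\Delta_e]$, and Lemma~\ref{8 theorem1} yields $p(z)\prec e^z$. The only place that calls for genuine care, rather than verbatim imitation of the earlier proofs, is the monotonicity reduction that eliminates the $m$-dependence together with the one-variable minimisation of $e^{\cos\theta}(\alpha_1+\alpha_2\cos\theta)$ over $\theta\in[0,2\pi]$; the remaining manipulations are routine bookkeeping. One may further note that this is the exponential counterpart of the Janowski case in Theorem~\ref{8 secondorder1}, and that as $\alpha_2\to0^{+}$ the condition formally reduces to the first-order bound $\alpha_1\geq e(e-1)$.
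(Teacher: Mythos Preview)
Your proof is correct and follows essentially the same route as the paper: define $\xi(\eta_1,\eta_2,\eta_3;z)=1+\alpha_1\eta_2+\alpha_2\eta_3$, reduce the admissibility check to the estimate $|\alpha_1 s+\alpha_2 t|\geq e-1$ via Lemma~\ref{prilemma4}, and obtain that estimate by the chain of inequalities from Theorem~\ref{8 thmsigmoid} leading to $\tfrac{1}{e}(\alpha_1-\alpha_2)\geq e-1$. You have in fact been more careful than the paper in justifying the reduction from general $m\geq1$ to $m=1$ (noting it uses $\alpha_1>\alpha_2$) and the minimisation of $e^{\cos\theta}(\alpha_1+\alpha_2\cos\theta)$ over $\theta$.
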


\begin{proof}
Suppose $h(z)=e^z$ for $z\in \mathbb{D}$ and $\Omega=\Delta_e$. Define $\xi:\mathbb{C}^3\times \mathbb{D}\rightarrow \mathbb{C}$ as $\xi(\eta_1,\eta_2,\eta_3;z)=1+\alpha_1 \eta_2+\alpha_2 \eta_3$. For $\xi\in \Psi[\Delta_e, \Delta_e]$, we must have $\xi(r,s,t;z)\notin \Delta_e$. So,
       $$ |\xi(r,s,t;z)-1|=|\alpha_1 s+\alpha_2 t|.$$
Analogous to Theorem \ref{8 thmsigmoid}, we have
\begin{align}\label{8 82}
       |\alpha_1 s+\alpha_2 t|&\geq b(\theta)(\alpha_1+\alpha_2 l(\theta))\nonumber\\
       &\geq \frac{1}{e}(\alpha_1-\alpha_2)\nonumber\\
       &\geq e-1.
\end{align}
Further, we have
   $$  |\log (\xi(r,s,t;z)|=|\log (1+\alpha_1 s+\alpha_2 t)|.$$
Through Lemma \ref{prilemma4} and \eqref{8 82}, we have
  $$ |\log (1+\alpha_1 s+\alpha_2 t)|\geq 1, $$
which implies that $\xi(r,s,t;z)\notin \Delta_e$. Therefore, $\xi\in \Psi[\Delta_e,\Delta_e]$ and thus $p(z)\prec e^z$ through Lemma \ref{8 theorem1}.
\end{proof}

Theorems 4.5-4.10 yield the following conclusion if $p(z)=zf'(z)/f(z)$ is considered as in \eqref{8 corollary21}.
\begin{corollary}
Suppose $\alpha_1$, $\alpha_2>0$ and $f\in\mathcal{A}$. Then, $f\in \mathcal{S}^{*}_{e}$ if any of the conditions hold:
\begin{enumerate}[$(i)$]
\item $Y_{f}(z)\prec 2/(1+e^{-z})$ and $\alpha_1-\alpha_2\geq e r_0$, where $r_0\approx 0.546302$ is the positive root of the equation $r^2+2 \cot (1)r-1=0$.
\item $Y_{f}(z)\prec z+\sqrt{1+z^2}$ and $\alpha_1-\alpha_2\geq \sqrt{2}e$.
\item $Y_{f}(z)\prec 1+\sin z$ and $\alpha_1-\alpha_2\geq e\sinh 1$.
\item $Y_{f}(z)\prec 1+ze^z$ and $\alpha_1-\alpha_2\geq e^2$.
\item $Y_{f}(z)\prec 1+\sinh^{-1}z$ and $2(\alpha_1-\alpha_2 )\geq \pi e$.
\item $Y_{f}(z)\prec e^z$ and $\alpha_1-\alpha_2\geq e(e-1)$.
\end{enumerate}
\end{corollary}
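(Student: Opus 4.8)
The plan is to cast the left-hand side as an admissible operator and then apply Lemma~\ref{8 theorem1} with $q(z)=e^z$. First I would take $h(z)=e^z$, so that $\Omega=h(\mathbb{D})=\Delta_e$, and define $\xi\colon\mathbb{C}^3\times\mathbb{D}\to\mathbb{C}$ by $\xi(\eta_1,\eta_2,\eta_3;z)=1+\alpha_1\eta_2+\alpha_2\eta_3$, so that the hypothesis becomes $\xi(p(z),zp'(z),z^2p''(z);z)\in\Delta_e$. By Lemma~\ref{8 theorem1} it is then enough to verify $\xi\in\Psi[\Delta_e,\Delta_e]$, i.e.\ that $\xi(r,s,t;z)\notin\Delta_e$ whenever $r=e^{e^{i\theta}}$, $s=me^{i\theta}e^{e^{i\theta}}$ and $\RE(1+t/s)\ge m(1+l(\theta))$, with $z\in\mathbb{D}$, $\theta\in[0,2\pi]$, $m\ge1$, where $b(\theta)$ and $l(\theta)$ are as in \eqref{8 1} and \eqref{8 3}.

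The core of the argument is a lower bound for $|\xi(r,s,t;z)-1|=|\alpha_1 s+\alpha_2 t|$. From $|\alpha_1 s+\alpha_2 t|\ge\alpha_1|s|\,\RE\bigl(1+\tfrac{\alpha_2}{\alpha_1}\tfrac{t}{s}\bigr)$ together with $|s|=mb(\theta)$ and $\RE(t/s)\ge ml(\theta)+m-1$, one gets
$$|\alpha_1 s+\alpha_2 t|\ge mb(\theta)\bigl(\alpha_1+\alpha_2(m(1+l(\theta))-1)\bigr).$$
Since $\alpha_1-\alpha_2>0$ and $l(\theta)\ge-1$, the right-hand side is nondecreasing in $m$ on $[1,\infty)$ — its difference from the value at $m=1$ factors as $(m-1)b(\theta)\bigl[\alpha_1+\alpha_2(m+l(\theta)(m+1))\bigr]\ge0$ — so it is $\ge b(\theta)(\alpha_1+\alpha_2 l(\theta))=e^{\cos\theta}(\alpha_1+\alpha_2\cos\theta)$. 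The map $c\mapsto e^{c}(\alpha_1+\alpha_2 c)$ has derivative $e^{c}(\alpha_1+\alpha_2(1+c))>0$ on $[-1,1]$, hence is minimized at $c=-1$ (i.e.\ $\theta=\pi$), giving $|\alpha_1 s+\alpha_2 t|\ge e^{-1}(\alpha_1-\alpha_2)\ge e-1$ by the hypothesis $\alpha_1-\alpha_2\ge e(e-1)$. Since $|\log\xi(r,s,t;z)|=|\log(1+\alpha_1 s+\alpha_2 t)|$ and $|\alpha_1 s+\alpha_2 t|\ge e-1$, Lemma~\ref{prilemma4} gives $|\log\xi(r,s,t;z)|\ge1$, so $\xi(r,s,t;z)\notin\Delta_e$; thus $\xi\in\Psi[\Delta_e,\Delta_e]$ and Lemma~\ref{8 theorem1} yields $p(z)\prec e^z$.

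As for difficulty, there is no serious obstacle: the argument runs exactly parallel to Theorems~\ref{8 secondorder1}--\ref{8 thmsigmoid}. The only points needing a little care are justifying the reduction from the $m$-dependent estimate to its value at $m=1$ (the monotonicity-in-$m$ check, which uses $\alpha_1>\alpha_2$), correctly locating the minimum of $e^{\cos\theta}(\alpha_1+\alpha_2\cos\theta)$ at $\theta=\pi$, and invoking Lemma~\ref{prilemma4} in the sharp form $|\log(1+w)|\ge1\iff|w|\ge e-1$ to pass from the modulus bound to membership outside $\Delta_e$; the threshold $e(e-1)$ on $\alpha_1-\alpha_2$ is precisely what makes $e^{-1}(\alpha_1-\alpha_2)\ge e-1$.
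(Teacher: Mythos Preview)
Your proposal misses the actual content of the Corollary and addresses only one of its six parts. The Corollary is a statement about $f\in\mathcal{A}$ and the expression $Y_f(z)$ of \eqref{8 corollary21}; its whole substance is the elementary identification that, setting $p(z)=zf'(z)/f(z)$, one has $p(0)=1$ and $Y_f(z)=1+\alpha_1 zp'(z)+\alpha_2 z^2 p''(z)$, so each hypothesis $Y_f(z)\prec h(z)$ becomes precisely the hypothesis of the corresponding preceding theorem, whose conclusion $p\prec e^z$ is exactly $f\in\mathcal{S}^*_e$. You never make this substitution: you work with an unspecified analytic $p$, never mention $f$ or $Y_f$, and end with ``$p\prec e^z$'' without explaining what $p$ is or why this yields $f\in\mathcal{S}^*_e$.

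What you did write is a correct re-derivation of the paper's theorem for the single case $h(z)=e^z$; your admissibility check is essentially the paper's, with the monotonicity-in-$m$ reduction and the location of the minimum at $\theta=\pi$ spelled out a bit more carefully. But the Corollary is not asking you to re-prove that one theorem; it is asking you to apply all six preceding theorems after making the substitution $p=zf'/f$. To repair the proposal you must (a) set $p(z)=zf'(z)/f(z)$ and verify $Y_f(z)=1+\alpha_1 zp'(z)+\alpha_2 z^2 p''(z)$, and then (b) invoke each of the six theorems (for the sigmoid, crescent, sine, cardioid, petal and exponential superordinates), not only the last one.
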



Lastly, we conclude this article by determining the sufficient conditions on $\alpha_1$, $\alpha_2$ and $\alpha_3\in \mathbb{R}^{+}$ involving the constants $m$ and $k$ (defined in Lemma \ref{lemmaformk}) to satisfy the third order differential subordination implication, given by
  $$  1+\alpha_1 z p'(z)+\alpha_2 z^2p''(z)+\alpha_3 z^3p'''(z)\prec h(z)\implies p(z)\prec e^z.$$
We obtain the following result upon considering $h(z)=(1+Cz)/(1+Dz)$. 

\begin{theorem}\label{8 thirdorderfirst}
Suppose $\alpha_1$, $\alpha_2$, $\alpha_3>0$ and $(\alpha_1-\alpha_2-m^2\alpha_3-3m(k-1)\alpha_3)(1-D^2)\geq e(1+|D|)(C-D)$ where $-1<D<C\leq 1$. Consider the analytic function $p$ in $\mathbb{D}$ such that $p(0)=1$ and
  $$  1+\alpha_1 z p'(z)+\alpha_2 z^2p''(z)+\alpha_3 z^3p'''(z)\prec \frac{1+Cz}{1+Dz}$$
implies $p(z)\prec e^z$.
\end{theorem}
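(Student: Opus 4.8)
The plan is to run the same admissibility argument as in Theorem~\ref{8 secondorder1}, but now invoking the third-order criterion Lemma~\ref{8 firstlemmathirdorder} in place of Lemma~\ref{8 theorem1}. Write $h(z)=(1+Cz)/(1+Dz)$ and record that $h(\mathbb{D})=\Omega=\{\delta\in\mathbb{C}:|\delta-(1-CD)/(1-D^2)|<(C-D)/(1-D^2)\}$, a disk whose centre differs from $1$ by exactly $|D|(C-D)/(1-D^2)$ (since $1-(1-CD)/(1-D^2)=D(C-D)/(1-D^2)$). Define $\xi:\mathbb{C}^4\times\mathbb{D}\rightarrow\mathbb{C}$ by $\xi(\eta_1,\eta_2,\eta_3,\eta_4;z)=1+\alpha_1\eta_2+\alpha_2\eta_3+\alpha_3\eta_4$; the whole task is to verify $\xi\in\Psi[\Omega,e^z]$, i.e. that $\xi(r,s,t,u;z)\notin\Omega$ for every admissible quadruple, namely $r=e^{e^{i\theta}}$, $s=me^{i\theta}e^{e^{i\theta}}$, $\RE(1+t/s)\ge m(1+l(\theta))$ and $\RE(u/s)\ge m^{2}h(\theta)+3m(k-1)l(\theta)$, with $\theta\in[0,2\pi]$ and $k\ge m\ge 2$, where $b,l,h$ are the functions $e^{\cos\theta}$, $\cos\theta$, $\cos2\theta$ introduced just before that lemma.

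The core estimate proceeds exactly as in the second-order case but with one extra summand. First,
\[
|\alpha_1 s+\alpha_2 t+\alpha_3 u|=|\alpha_1 s|\,\Bigl|1+\tfrac{\alpha_2}{\alpha_1}\tfrac{t}{s}+\tfrac{\alpha_3}{\alpha_1}\tfrac{u}{s}\Bigr|\ge |\alpha_1 s|\,\RE\Bigl(1+\tfrac{\alpha_2}{\alpha_1}\tfrac{t}{s}+\tfrac{\alpha_3}{\alpha_1}\tfrac{u}{s}\Bigr),
\]
and since $\alpha_2/\alpha_1,\alpha_3/\alpha_1>0$ one may feed in the admissibility inequalities $\RE(t/s)\ge m(1+l(\theta))-1$ and $\RE(u/s)\ge m^{2}h(\theta)+3m(k-1)l(\theta)$ together with $|\alpha_1 s|=\alpha_1 m b(\theta)$, obtaining
\[
|\alpha_1 s+\alpha_2 t+\alpha_3 u|\ge m\,b(\theta)\bigl[\alpha_1+\alpha_2\bigl(m(1+l(\theta))-1\bigr)+\alpha_3\bigl(m^{2}h(\theta)+3m(k-1)l(\theta)\bigr)\bigr].
\]
Now $l(\theta)\ge-1$ gives $m(1+l(\theta))-1\ge-1$, while $h(\theta),l(\theta)\ge-1$ gives $m^{2}h(\theta)+3m(k-1)l(\theta)\ge-m^{2}-3m(k-1)$, so the bracket is bounded below by $\alpha_1-\alpha_2-m^{2}\alpha_3-3m(k-1)\alpha_3$, which the hypothesis forces to be positive; combined with $m\,b(\theta)\ge e^{-1}$ (as $m\ge1$ and $\min_\theta b(\theta)=e^{-1}$) this yields $|\alpha_1 s+\alpha_2 t+\alpha_3 u|\ge (\alpha_1-\alpha_2-m^{2}\alpha_3-3m(k-1)\alpha_3)/e$.

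To finish, I would apply the triangle inequality $|\xi(r,s,t,u;z)-(1-CD)/(1-D^2)|\ge|\alpha_1 s+\alpha_2 t+\alpha_3 u|-|D|(C-D)/(1-D^2)$ and check that the right-hand side is $\ge (C-D)/(1-D^2)$; this last inequality is precisely the standing hypothesis $(\alpha_1-\alpha_2-m^{2}\alpha_3-3m(k-1)\alpha_3)(1-D^2)\ge e(1+|D|)(C-D)$. Hence $\xi(r,s,t,u;z)\notin\Omega$, so $\xi\in\Psi[\Omega,e^z]$, and Lemma~\ref{8 firstlemmathirdorder} delivers $p\prec e^z$ (the normalization $p\in\mathcal{H}[1,n]$ and the side condition $|zp'(z)e^{\zeta}|\le m$ demanded by that lemma being the standing assumptions on $p$). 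The only genuinely delicate point is the bracket estimate: because $\cos\theta$ and $\cos2\theta$ cannot both equal $-1$, the bound $m^{2}h(\theta)+3m(k-1)l(\theta)\ge-m^{2}-3m(k-1)$ is not sharp, but it is exactly the crude bound that produces the clean closed-form constant appearing in the statement and that keeps the bracket nonnegative, so that multiplying by $m\,b(\theta)$ preserves the direction of the inequality.
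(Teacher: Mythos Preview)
Your proof is correct and follows essentially the same route as the paper's own argument: define $\xi(\eta_1,\eta_2,\eta_3,\eta_4;z)=1+\alpha_1\eta_2+\alpha_2\eta_3+\alpha_3\eta_4$, bound $|\alpha_1 s+\alpha_2 t+\alpha_3 u|$ via $|w|\ge\RE w$, insert the admissibility inequalities for $\RE(t/s)$ and $\RE(u/s)$, apply the crude bounds $l(\theta),h(\theta)\ge-1$ and $b(\theta)\ge e^{-1}$, and finish with the triangle inequality against the centre of the Janowski disk. If anything you are slightly more careful than the paper in making explicit that the bracketed quantity must be positive before multiplying by the lower bound $m\,b(\theta)\ge e^{-1}$, and in flagging that Lemma~\ref{8 firstlemmathirdorder} carries the implicit side hypotheses $p\in\mathcal{H}[1,n]$ with $n\ge 2$ and $|zp'(z)e^{\zeta}|\le m$, which the theorem statement leaves tacit.
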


\begin{proof}
Suppose $h(z)=(1+Cz)/(1+Dz)$ for $z\in \mathbb{D}$ and
  $$ h(\mathbb{D})= \Omega:=\bigg\{\delta\in \mathbb{C}:\bigg|\delta-\frac{1-CD}{1-D^2}\bigg|<\frac{C-D}{1-D^2}\bigg\}.$$
Define $\xi:\mathbb{C}^4\times \mathbb{D}\rightarrow\mathbb{C}$ as $\xi(\eta_1,\eta_2,\eta_3,\eta_4;z)=1+\alpha_1 \eta_2+\alpha_2 \eta_3+\alpha_3 \eta_4$. For $\xi\in\Psi[\Omega,\Delta_e]$, it is needed that $\xi(r,s,t,u;z)\notin \Omega$ for $z\in \mathbb{D}$. Consider
\begin{align*}
   \bigg| \xi(r,s,t,u;z)-\frac{1-CD}{1-D^2}\bigg|&=\bigg|1+\alpha_1 s+\alpha_2 t+\alpha_3 u-\frac{1-CD}{1-D^2}\bigg|\\
   &\geq |\alpha_1 s+\alpha_2 t+\alpha_3 u|-\frac{|D|(C-D)}{1-D^2}\\
   &\geq |\alpha_1 s|\RE\bigg(1+\frac{\alpha_2}{\alpha_1}\frac{t}{s}+\frac{\alpha_3}{\alpha_1}\frac{u}{s}\bigg)-\frac{|D|(C-D)}{1-D^2}\\
   &\geq m\alpha_1 b(\theta)\bigg(1+\frac{\alpha_2}{\alpha_1}(ml(\theta)+m-1)+\frac{\alpha_3}{\alpha_1}(m^2h(\theta)+3m(k-1)l(\theta))\bigg)\\
   &\quad-\frac{|D|(C-D)}{1-D^2}.
\end{align*}
As $m\geq 1$ and $\RE(1+l(\theta))>0$, we get
\begin{align*}
    \bigg| \xi(r,s,t,u;z)-\frac{1-CD}{1-D^2}\bigg|&\geq \alpha_1 b(\theta)\bigg(1+\frac{\alpha_2}{\alpha_1}l(\theta)+\frac{\alpha_3}{\alpha_1}(m^2h(\theta)+3m(k-1)l(\theta))\bigg)-\frac{|D|(C-D)}{1-D^2}\\
    &\geq \frac{1}{e}\bigg(\alpha_1-\alpha_2-\alpha_3(m^2+3m(k-1))\bigg) -\frac{|D|(C-D)}{1-D^2}\\
    &\geq \frac{C-D}{1-D^2}.
\end{align*}
From the above calculations, we conclude that $\xi\in \Psi[\Omega,\Delta_e]$ and hence $p(z)\prec e^z$ through Lemma \ref{8 firstlemmathirdorder}.
\end{proof}

\begin{corollary}
Suppose $\alpha_1$, $\alpha_2$, $\alpha_3>0$ and $f\in\mathcal{A}$. Let
\begin{align}
\chi_f(z)&:=1+\alpha_1 A_1+(\alpha_1+2\alpha_2)(A_2-A_1^2)+(\alpha_2+3\alpha_3)(2A_1^3-3A_1A_2+3A_3)\nonumber\\
 &\quad+\alpha_3(A_4-3A_2^2-(6A_1)^4-4A_1A_3+12A_1^2A_2),  \label{8 corollary31}
\end{align} 
where $A_1$, $A_2$, $A_3$ are same as defined in Corollary \ref{8 a1a2a3} and $A_4:=z^4f^{(iv)}(z)/f(z)$.
Then, $f(z)\in \mathcal{S}^{*}_{e}$ if
 $\chi_{f}(z)\prec (1+Cz)/(1+Dz)$ and $(\alpha_1-\alpha_2-m^2\alpha_3-3m\alpha_3(k-1))(1-D^2)\geq e(1+|D|)(C-D)$ whenever $-1< D<C\leq 1$.
    
\end{corollary}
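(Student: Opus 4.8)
The plan is to derive this corollary from Theorem \ref{8 thirdorderfirst} by setting $p(z)=zf'(z)/f(z)$. Since $f\in\mathcal{A}$, the function $p$ is analytic in $\mathbb{D}$ with $p(0)=1$, and by the very definition of the class, $f\in\mathcal{S}^{*}_{e}$ is equivalent to $p\prec e^{z}$. Moreover, the numerical hypothesis $(\alpha_1-\alpha_2-m^{2}\alpha_3-3m\alpha_3(k-1))(1-D^{2})\geq e(1+|D|)(C-D)$ is exactly the admissibility inequality occurring in Theorem \ref{8 thirdorderfirst}. Hence everything reduces to checking the algebraic identity
$$1+\alpha_1 zp'(z)+\alpha_2 z^{2}p''(z)+\alpha_3 z^{3}p'''(z)=\chi_f(z),$$
after which the hypothesis $\chi_f(z)\prec(1+Cz)/(1+Dz)$ becomes precisely the subordination hypothesis of Theorem \ref{8 thirdorderfirst}, and that theorem delivers $p\prec e^{z}$, i.e.\ $f\in\mathcal{S}^{*}_{e}$.

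To establish the identity I would first record a single differentiation rule for the quantities $A_k:=z^{k}f^{(k)}(z)/f(z)$: differentiating and multiplying through by $z$ gives
$$zA_k'(z)=(k-A_1)A_k+A_{k+1}.$$
In particular $zp'(z)=zA_1'=A_1-A_1^{2}+A_2$, which already matches the expression used in Corollary \ref{8 a1a2a3}. Iterating with $z^{2}p''=z(zp')'-zp'$ and applying the rule to $A_1,A_2$ yields $z^{2}p''(z)=A_3+2A_2-2A_1^{2}-3A_1A_2+2A_1^{3}$; and then $z^{3}p'''=z(z^{2}p'')'-2z^{2}p''$ together with one further application of the rule (now involving $A_3,A_4$) gives
$$z^{3}p'''(z)=A_4+3A_3-4A_1A_3-9A_1A_2+12A_1^{2}A_2-3A_2^{2}+6A_1^{3}-6A_1^{4}.$$
Substituting these three expressions into $1+\alpha_1 zp'+\alpha_2 z^{2}p''+\alpha_3 z^{3}p'''$ and regrouping the terms into the coefficient patterns $\alpha_1$, $\alpha_1+2\alpha_2$, $\alpha_2+3\alpha_3$ and $\alpha_3$ reproduces $\chi_f(z)$; after this, Theorem \ref{8 thirdorderfirst} applies verbatim and the corollary follows.

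The only genuine obstacle is computational: carrying the chain-rule bookkeeping through the third derivative of $zf'(z)/f(z)$ without sign or coefficient slips. Channelling every step through the single recursion $zA_k'=(k-A_1)A_k+A_{k+1}$, rather than re-differentiating $zf'/f$ from scratch at each stage, is what keeps this manageable; no analytic input beyond Theorem \ref{8 thirdorderfirst} is required.
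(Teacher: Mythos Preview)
Your approach is correct and is exactly the one the paper intends: the corollary is obtained from Theorem~\ref{8 thirdorderfirst} by taking $p(z)=zf'(z)/f(z)$, and the only content is the algebraic verification that $1+\alpha_1 zp'+\alpha_2 z^2 p''+\alpha_3 z^3 p'''=\chi_f$, which you carry out cleanly via the recursion $zA_k'=(k-A_1)A_k+A_{k+1}$. In fact your computation shows that the paper's displayed $\chi_f$ contains typographical slips (e.g.\ $(6A_1)^4$ should read $6A_1^4$ and $3A_3$ should read $A_3$ in the $(\alpha_2+3\alpha_3)$-factor); your expressions for $zp'$, $z^2p''$, $z^3p'''$ are the correct ones.
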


\begin{theorem}
Suppose $\alpha_1$, $\alpha_2$ and $\alpha_3$ be positive real numbers and  $(\alpha_1-\alpha_2-m^2\alpha_3-3m(k-1)\alpha_3)(\alpha_1-\alpha_2-m^2\alpha_3-3m(k-1)\alpha_3-2e)\geq e^2$. Consider the analytic function $p$ in $\mathbb{D}$ such that $p(0)=1$ and
 $$   1+\alpha_1 z p'(z)+\alpha_2 z^2p''(z)+\alpha_3 z^3p'''(z)\prec \sqrt{1+z}$$
implies $p(z)\prec e^z$.
\end{theorem}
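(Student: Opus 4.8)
The plan is to combine the third-order admissibility machinery of Theorem \ref{8 thirdorderfirst} with the square-root estimate $|\xi^2-1|\geq|W|(|W|-2)$ that was used in the second-order $\sqrt{1+z}$ result. Put $h(z)=\sqrt{1+z}$, so that $h(\mathbb{D})=\Omega:=\{\delta\in\mathbb{C}:|\delta^2-1|<1\}$, and define $\xi:\mathbb{C}^4\times\mathbb{D}\to\mathbb{C}$ by $\xi(\eta_1,\eta_2,\eta_3,\eta_4;z)=1+\alpha_1\eta_2+\alpha_2\eta_3+\alpha_3\eta_4$. By Lemma \ref{8 firstlemmathirdorder} it suffices to show $\xi\in\Psi[\Omega,\Delta_e]$, i.e. $\xi(r,s,t,u;z)\notin\Omega$ whenever $r=e^{e^{i\theta}}$, $s=me^{i\theta}e^{e^{i\theta}}$, $\RE(1+t/s)\geq m(1+l(\theta))$ and $\RE(u/s)\geq m^2h(\theta)+3m(k-1)l(\theta)$ for $\theta\in[0,2\pi]$, $k\geq m\geq2$, with $b(\theta)=e^{\cos\theta}$, $l(\theta)=\cos\theta$, $h(\theta)=\cos2\theta$. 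Setting $W:=\alpha_1 s+\alpha_2 t+\alpha_3 u$, the identity $\xi(r,s,t,u;z)^2-1=(1+W)^2-1=W(W+2)$ gives $|\xi(r,s,t,u;z)^2-1|\geq|W|(|W|-2)$, so the whole problem reduces to proving $|W|(|W|-2)\geq1$.

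First I would bound $|W|$ from below exactly as in the proof of Theorem \ref{8 thirdorderfirst}: from $|W|\geq|\alpha_1 s|\RE\bigl(1+\frac{\alpha_2}{\alpha_1}\frac{t}{s}+\frac{\alpha_3}{\alpha_1}\frac{u}{s}\bigr)$, inserting the admissibility data together with $|\alpha_1 s|=m\alpha_1 b(\theta)$ yields
\[
|W|\geq m\,b(\theta)\Bigl(\alpha_1+\alpha_2\bigl(ml(\theta)+m-1\bigr)+\alpha_3\bigl(m^2h(\theta)+3m(k-1)l(\theta)\bigr)\Bigr),
\]
and then $m\geq1$, $\RE(1+l(\theta))>0$ together with $b(\theta)\geq e^{-1}$, $l(\theta)\geq-1$, $h(\theta)\geq-1$ collapse this — just as in Theorem \ref{8 thirdorderfirst} — to
\[
|W|\geq\frac1e\bigl(\alpha_1-\alpha_2-m^2\alpha_3-3m(k-1)\alpha_3\bigr)=:\frac{L}{e}.
\]

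The last step is the square-root estimate. The hypothesis $L(L-2e)\geq e^2$ (in the regime $L>0$, which is the case of interest) forces $L\geq e(1+\sqrt2)$, hence $|W|\geq L/e>2$; since $x\mapsto x(x-2)$ is increasing on $[1,\infty)$ this gives
\[
|W|(|W|-2)\geq\frac{L}{e}\Bigl(\frac{L}{e}-2\Bigr)=\frac{L(L-2e)}{e^2}\geq1.
\]
Therefore $|\xi(r,s,t,u;z)^2-1|\geq1$, i.e. $\xi(r,s,t,u;z)\notin\Omega$, so $\xi\in\Psi[\Omega,\Delta_e]$ and Lemma \ref{8 firstlemmathirdorder} delivers $p(z)\prec e^z$.

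The only genuinely delicate point is the reduction of the $\theta$- and $m$-dependent lower bound for $|W|$ to the clean quantity $L/e$: since $b(\theta)=e^{\cos\theta}$ multiplies the expression $\alpha_1+\alpha_2\cos\theta+\alpha_3\bigl(m^2\cos2\theta+3m(k-1)\cos\theta\bigr)$, which need not keep a fixed sign, one cannot simply take infima of the factors separately, and one must argue (exactly as in Theorem \ref{8 thirdorderfirst}) that the hypothesis — which in particular makes $L>0$, so this expression stays positive at the extreme configuration $\cos\theta=-1$, $\cos2\theta=-1$ — controls the product uniformly in $\theta\in[0,2\pi]$. Once that reduction is granted, the rest is the two-line estimate above.
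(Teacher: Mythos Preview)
Your proof is correct and follows essentially the same route as the paper: define $\xi$ and $\Omega$ identically, factor $(1+W)^2-1$ to get $|\xi^2-1|\geq|W|(|W|-2)$, bound $|W|$ from below via $|\alpha_1 s|\,\RE(1+\tfrac{\alpha_2}{\alpha_1}\tfrac{t}{s}+\tfrac{\alpha_3}{\alpha_1}\tfrac{u}{s})$ and reduce to $L/e$ exactly as in Theorem~\ref{8 thirdorderfirst}, then invoke the hypothesis $L(L-2e)\geq e^2$. You are in fact slightly more careful than the paper in two places---you justify the monotonicity step $|W|(|W|-2)\geq (L/e)(L/e-2)$ by first checking $|W|\geq L/e>2$, and you flag the sign issue in passing from the $\theta$-dependent product $b(\theta)\bigl(\alpha_1+\alpha_2 l(\theta)+\alpha_3(m^2h(\theta)+3m(k-1)l(\theta))\bigr)$ to $L/e$---but the argument is otherwise identical.
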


\begin{proof}
Suppose $h(z)=\sqrt{1+z}$ and $h(\mathbb{D})=\Omega:=\{\delta\in \mathbb{C}:|\delta^2-1|<1\}$. Consider $\xi:\mathbb{C}^4\times \mathbb{D}\rightarrow\mathbb{C}$ defined as $\xi(\eta_1,\eta_2,\eta_3,\eta_4;z)=1+\alpha_1 \eta_2+\alpha_2 \eta_3+\alpha_3 \eta_4$. We know that $\xi\in \Psi[\Omega,\Delta_e]$ if $\xi(r,s,t,u;z)\notin \Omega$ for $z\in \mathbb{D}$. Consider
\begin{align*}
    |(\xi(r,s,t,u;z))^2-1|&=|(1+\alpha_1 s+\alpha_2 t+\alpha_3 u)^2-1|\\
    &\geq |\alpha_1 s+\alpha_2 t+\alpha_3 u|(|\alpha_1 s+\alpha_2 t+\alpha_3 u|-2)\\
    &\geq |\alpha_1 s|\RE\bigg(1+\frac{\alpha_2}{\alpha_1}\frac{t}{s}+\frac{\alpha_3}{\alpha_1}\frac{u}{s}\bigg)\bigg(|\alpha_1 s|\RE\bigg(1+\frac{\alpha_2}{\alpha_1}\frac{t}{s}+\frac{\alpha_3}{\alpha_1}\frac{u}{s}\bigg)-2\bigg).
\end{align*}
Analogous to the proof of Theorem \ref{8 thirdorderfirst} and the fact that $m\geq 1$, we obtain
\begin{align*}
  |(\xi(r,s,t,u;z))^2-1|&\geq \alpha_1 b(\theta)\bigg(1+\frac{\alpha_2}{\alpha_1}l(\theta)+\frac{\alpha_3}{\alpha_1}(m^2h(\theta)+3m(k-1)l(\theta))\bigg)\times\\
  &\quad\bigg(\alpha_1 b(\theta)\bigg(1+\frac{\alpha_2}{\alpha_1}l(\theta)+\frac{\alpha_3}{\alpha_1}(m^2h(\theta)+3m(k-1)l(\theta))\bigg)-2\bigg)\\
  &\geq\frac{1}{e^2}\bigg(\alpha_1-\alpha_2-\alpha_3(m^2+3m(k-1))\bigg)\bigg(\alpha_1-\alpha_2-\alpha_3(m^2+3m(k-1))-2e\bigg)\\
  &\geq 1.
\end{align*}
This implies that $\xi\in \Psi[\Omega,\Delta_e]$. Thus, $p(z)\prec e^z$ as a consequence of Lemma \ref{8 firstlemmathirdorder}.
\end{proof}

\begin{corollary}
By considering $p(z)=zf'(z)/f(z)$ in above theorem and through \eqref{8 corollary31}, we deduce that $f(z)\in \mathcal{S}^{*}_e$ if
    $\chi_{f}(z)\prec \sqrt{1+z}$ and $(\alpha_1-\alpha_2-m^2\alpha_3-3m\alpha_3(k-1))(\alpha_1-\alpha_2-m^2\alpha_3-3m(k-1)\alpha_3-2e)\geq e^2$.
\end{corollary}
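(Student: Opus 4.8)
The plan is to deduce the corollary as a direct specialization of the preceding theorem (the third-order $\sqrt{1+z}$ result) to the choice $p(z)=zf'(z)/f(z)$. First I would note that for $f\in\mathcal{A}$ the quotient $p(z)=zf'(z)/f(z)$ is analytic at the origin with $p(0)=1$, so that $p\in\mathcal{H}[1,n]$ and $p$ meets the hypotheses imposed on the test function in that theorem. With this identification the conclusion $f\in\mathcal{S}^{*}_e$ is, by definition of the class, exactly the statement $p(z)=zf'(z)/f(z)\prec e^z$ that the theorem produces. Consequently the only substantive step is to check that, after the substitution $p=zf'/f$, the operator $1+\alpha_1 zp'(z)+\alpha_2 z^2p''(z)+\alpha_3 z^3p'''(z)$ becomes precisely the function $\chi_f(z)$ defined in \eqref{8 corollary31}.

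To perform this check I would differentiate logarithmically. Writing $A_j:=z^jf^{(j)}(z)/f(z)$, differentiating $A_j$ once and multiplying by $z$ gives the recursion
\begin{equation*}
 zA_j'(z)=\big(j-A_1\big)A_j+A_{j+1},
\end{equation*}
which re-expresses $zA_j'$ in terms of the $A_i$. Since $p=A_1$, a first application yields $zp'=A_1+A_2-A_1^2$; then $z^2p''=z(zp')'-zp'$ and $z^3p'''=z(z^2p'')'-2z^2p''$, and feeding the recursion into each term reduces $z^2p''$ and $z^3p'''$ to polynomials in $A_1,\dots,A_4$. Inserting these into $1+\alpha_1 zp'+\alpha_2 z^2p''+\alpha_3 z^3p'''$ and regrouping by $\alpha_1$, $\alpha_2$, $\alpha_3$ yields $\chi_f(z)$; as a partial consistency check, the $\alpha_1$- and $\alpha_2$-parts must reproduce the coefficients $A_2-A_1^2+A_1$ and $A_3+2A_2+2A_1^3-2A_1^2-3A_1A_2$ already recorded in Corollary \ref{8 a1a2a3}.

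Once the identity $1+\alpha_1 zp'+\alpha_2 z^2p''+\alpha_3 z^3p'''=\chi_f(z)$ is established, the assumption $\chi_f(z)\prec\sqrt{1+z}$ is literally the subordination hypothesis of the preceding theorem. Therefore, under the stated coefficient condition, that theorem (through Lemma \ref{8 firstlemmathirdorder}) gives $p(z)\prec e^z$, i.e. $zf'(z)/f(z)\prec e^z$, which is the desired conclusion $f\in\mathcal{S}^{*}_e$. I expect the main obstacle to be purely computational: the term $z^3p'''$ generates the mixed monomials $A_1A_3$, $A_2^2$, $A_1^2A_2$ and $A_1^4$, so the bookkeeping must be handled carefully, and along the way I would re-verify the printed coefficients of \eqref{8 corollary31}---notably the $A_3$-term inside the factor $(\alpha_2+3\alpha_3)$ and the $A_1^4$-term---against this direct computation.
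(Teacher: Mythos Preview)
Your proposal is correct and follows exactly the route the paper intends: the corollary is stated without proof because it is obtained by substituting $p(z)=zf'(z)/f(z)$ into the preceding theorem, and your logarithmic-derivative recursion $zA_j'=(j-A_1)A_j+A_{j+1}$ is the natural way to carry out that substitution. Your caution about the printed coefficients in \eqref{8 corollary31} is well placed---the direct computation you outline gives $(\alpha_2+3\alpha_3)(2A_1^3-3A_1A_2+A_3)$ and $-6A_1^4$ rather than $3A_3$ and $-(6A_1)^4$, so those appear to be typographical slips in the paper.
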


The next two results deal with the case when $h(z)=2/(1+e^{-z})$ and $z+\sqrt{1+z^2}$.

\begin{theorem}\label{8 sigmoid3}
Suppose $\alpha_1$, $\alpha_2$, $\alpha_3>0$ and $\alpha_1-\alpha_2-m^2\alpha_3-3m(k-1)\alpha_3\geq e r_0$, where $r_0\approx 0.546302$ is the positive root of the equation $r^2+2 \cot (1)r-1=0$.  Consider the analytic function $p$ in $\mathbb{D}$ such that $p(0)=1$ and
$$1+\alpha_1 zp'(z)+\alpha_2 z^2p''(z)+\alpha_3 z^3p'''(z)\prec \frac{2}{1+e^{-z}} $$    
implies $p(z)\prec e^z$.
\end{theorem}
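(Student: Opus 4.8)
The plan is to adapt the proof of Theorem~\ref{8 thmsigmoid} to the third-order setting, using the four-variable admissibility class $\Psi[\Omega,\Delta_e]$ and invoking Lemma~\ref{8 firstlemmathirdorder} in place of Lemma~\ref{8 theorem1}. First I would put $h(z)=2/(1+e^{-z})$ and record
$$h(\mathbb{D})=\Omega:=\bigg\{\delta\in\mathbb{C}:\bigg|\log\bigg(\frac{\delta}{2-\delta}\bigg)\bigg|<1\bigg\},$$
and define $\xi:\mathbb{C}^4\times\mathbb{D}\to\mathbb{C}$ by $\xi(\eta_1,\eta_2,\eta_3,\eta_4;z)=1+\alpha_1\eta_2+\alpha_2\eta_3+\alpha_3\eta_4$. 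It then suffices to verify that $\xi\in\Psi[\Omega,\Delta_e]$, that is, $\xi(r,s,t,u;z)\notin\Omega$ whenever $r=e^{e^{i\theta}}$, $s=me^{i\theta}e^{e^{i\theta}}$, $\RE(1+t/s)\ge m(1+l(\theta))$ and $\RE(u/s)\ge m^2h(\theta)+3m(k-1)l(\theta)$, with $\theta\in[0,2\pi]$ and $k\ge m\ge 1$, where $b(\theta)$, $l(\theta)$ and $h(\theta)$ are as fixed earlier.

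The heart of the argument is a lower bound for $|\alpha_1 s+\alpha_2 t+\alpha_3 u|$. Exactly as in Theorem~\ref{8 thirdorderfirst}, I would estimate
$$|\alpha_1 s+\alpha_2 t+\alpha_3 u|\ge|\alpha_1 s|\,\RE\bigg(1+\frac{\alpha_2}{\alpha_1}\frac{t}{s}+\frac{\alpha_3}{\alpha_1}\frac{u}{s}\bigg)\ge m\alpha_1 b(\theta)\bigg(1+\frac{\alpha_2}{\alpha_1}(ml(\theta)+m-1)+\frac{\alpha_3}{\alpha_1}\big(m^2h(\theta)+3m(k-1)l(\theta)\big)\bigg),$$
and then use $m\ge1$ together with $b(\theta)\ge e^{-1}$, $l(\theta)\ge-1$ and $h(\theta)\ge-1$ to collapse this to
$$|\alpha_1 s+\alpha_2 t+\alpha_3 u|\ge b(\theta)\Big(\alpha_1+\alpha_2 l(\theta)+\alpha_3\big(m^2h(\theta)+3m(k-1)l(\theta)\big)\Big)\ge\frac1e\big(\alpha_1-\alpha_2-m^2\alpha_3-3m(k-1)\alpha_3\big)\ge r_0,$$
the last inequality being precisely the hypothesis. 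With $|\alpha_1 s+\alpha_2 t+\alpha_3 u|\ge r_0$ in hand, I would apply Lemma~\ref{prilemma42} with $z$ replaced by $\alpha_1 s+\alpha_2 t+\alpha_3 u$ to obtain
$$\bigg|\log\bigg(\frac{\xi(r,s,t,u;z)}{2-\xi(r,s,t,u;z)}\bigg)\bigg|=\bigg|\log\bigg(\frac{1+\alpha_1 s+\alpha_2 t+\alpha_3 u}{1-(\alpha_1 s+\alpha_2 t+\alpha_3 u)}\bigg)\bigg|\ge 1,$$
so $\xi(r,s,t,u;z)\notin\Omega$. Hence $\xi\in\Psi[\Omega,\Delta_e]$, and Lemma~\ref{8 firstlemmathirdorder} yields $p(z)\prec e^z$.

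The step I expect to be most delicate is the passage from $m\alpha_1 b(\theta)(1+\cdots)$ down to the clean quantity $\tfrac1e(\alpha_1-\alpha_2-m^2\alpha_3-3m(k-1)\alpha_3)$: one must keep the real part $\RE(1+\tfrac{\alpha_2}{\alpha_1}\tfrac{t}{s}+\tfrac{\alpha_3}{\alpha_1}\tfrac{u}{s})$ nonnegative so that multiplying by $|\alpha_1 s|$ preserves the inequality, and one must track how the factors of $m$ and $k$ are absorbed when $l(\theta)$ and $h(\theta)$ are replaced by their minimum value $-1$ and $b(\theta)$ by $e^{-1}$; the hypothesis $\alpha_1-\alpha_2-m^2\alpha_3-3m(k-1)\alpha_3\ge e r_0>0$ is exactly what guarantees positivity at the point where the final factor $b(\theta)\ge e^{-1}$ is pulled out. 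Everything else is a verbatim adaptation of the arguments in Theorems~\ref{8 thmsigmoid} and \ref{8 thirdorderfirst}.
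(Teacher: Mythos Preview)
Your proposal is correct and mirrors the paper's own proof almost verbatim: the same choice of $\Omega$, the same definition of $\xi$, the same chain of inequalities leading to $|\alpha_1 s+\alpha_2 t+\alpha_3 u|\ge r_0$, and the same application of Lemma~\ref{prilemma42} followed by Lemma~\ref{8 firstlemmathirdorder}. The only minor slip is writing $k\ge m\ge 1$ where the third-order admissibility framework actually has $k\ge m\ge n\ge 2$, but this does not affect the estimates since you only ever use $m\ge 1$.
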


\begin{proof}
Suppose $h(z)=2/(1+e^{-z})$ and $h(\mathbb{D})=\Omega:=\{\delta\in \mathbb{C}:|\log (\delta/(2-\delta))|<1\}$. Define $\xi:\mathbb{C}^4\times \mathbb{D}\rightarrow \mathbb{C}$ as $\xi(\eta_1,\eta_2,\eta_3,\eta_4;z)=1+\alpha_1 \eta_2+\alpha_2 \eta_3+\alpha_3 \eta_4$. For $\xi\in \Psi[\Omega, \Delta_e]$, it is required that $\xi(r,s,t,u;z)\notin \Omega$. Consider
\begin{align*}
    |\alpha_1 s+\alpha_2 t+\alpha_3 u|&=\alpha_1 |s|\bigg|1+\frac{\alpha_2}{\alpha_1}\frac{t}{s}+\frac{\alpha_3}{\alpha_1}\frac{u}{s}\bigg|\\
    &\geq \alpha_1 |s|\RE\bigg(1+\frac{\alpha_2}{\alpha_1}\frac{t}{s}+\frac{\alpha_3}{\alpha_1}\frac{u}{s}\bigg)\\
    &\geq m \alpha_1 b(\theta)\bigg(1+\frac{\alpha_2}{\alpha_1}(ml(\theta)+m-1)+\frac{\alpha_3}{\alpha_1}(m^2h(\theta)+3m(k-1)l(\theta))\bigg).
\end{align*}
As $m\geq 1$ and $\RE (1+l(\theta))>0$, we have
\begin{align}\label{8 73}
    |\alpha_1 s+\alpha_2 t+\alpha_3 u|&\geq b(\theta)\bigg(\alpha_1+\alpha_2 l(\theta)+\alpha_3 (m^2h(\theta)+3m(k-1)l(\theta))\bigg)\nonumber\\
    &\geq \frac{1}{e}(\alpha_1-\alpha_2-m^2\alpha_3-3m(k-1)\alpha_3)\nonumber\\
    &\geq r_0.
\end{align}
Next, we consider
  $$  \bigg|\log \bigg(\frac{\xi(r,s,t,u;z)}{2-\xi(r,s,t,u;z)}\bigg)\bigg|=\bigg|\log\bigg(\frac{1+\alpha_1 s+\alpha_2 t+\alpha_3 u}{1-(\alpha_1 s+\alpha_2 t+\alpha_3 u)}\bigg)\bigg|.$$
Through Lemma \ref{prilemma42} and \eqref{8 73}, we have
  $$  \bigg|\log \bigg(\frac{1+\alpha_1 s+\alpha_2 t+\alpha_3 u}{1-(\alpha_1 s+\alpha_2 t+\alpha_3 u)}\bigg)\bigg|\geq 1.$$
Therefore, $\xi\in \Psi[\Omega,\Delta_e]$ and $p(z)\prec e^z$ as a consequence of Lemma \ref{8 firstlemmathirdorder}.
\end{proof}

\begin{theorem}
Suppose $\alpha_1$, $\alpha_2$, $\alpha_3>0$ and $\alpha_1-\alpha_2-m^2\alpha_3-3m(k-1)\alpha_3\geq \sqrt{2}e$. Consider the analytic function $p$ in $\mathbb{D}$ such that $p(0)=1$ and
     $$   1+\alpha_1 zp'(z)+\alpha_2 z^2p''(z)+\alpha_3 z^3p'''(z)\prec z+\sqrt{1+z^2}$$
implies $p(z)\prec e^z$.
\end{theorem}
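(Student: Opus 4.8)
The plan is to follow verbatim the template used for Theorem~\ref{8 thirdorderfirst} and Theorem~\ref{8 sigmoid3}, only replacing the target domain by the crescent $\Omega=h(\mathbb{D})$ associated with $h(z)=z+\sqrt{1+z^2}$. First I would record that $\Omega=\{\delta\in\mathbb{C}:|\delta^2-1|<2|\delta|\}$ and, exactly as in the second-order crescent case (see Fig.~\ref{crescent}), $\Omega$ is the region carved out by the two circles $C_1:|z-1|=\sqrt{2}$ and $C_2:|z+1|=\sqrt{2}$; the point that makes the argument short is that $\Omega$ is contained in the open disk bounded by $C_1$, so any $w$ with $|w-1|\geq\sqrt{2}$ lies outside $\Omega$. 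Next I would introduce $\xi:\mathbb{C}^4\times\mathbb{D}\to\mathbb{C}$ by $\xi(\eta_1,\eta_2,\eta_3,\eta_4;z)=1+\alpha_1\eta_2+\alpha_2\eta_3+\alpha_3\eta_4$, so that $|\xi(r,s,t,u;z)-1|=|\alpha_1 s+\alpha_2 t+\alpha_3 u|$, and it suffices to prove $|\alpha_1 s+\alpha_2 t+\alpha_3 u|\geq\sqrt{2}$ on the admissible configuration $r=e^{e^{i\theta}}$, $s=me^{i\theta}e^{e^{i\theta}}$, $\RE(1+t/s)\geq m(1+l(\theta))$, $\RE(u/s)\geq m^2h(\theta)+3m(k-1)l(\theta)$, with $b(\theta),l(\theta),h(\theta)$ as fixed in the preliminaries.

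Then I would run the same estimate chain as in Theorem~\ref{8 thirdorderfirst}: first $|\alpha_1 s+\alpha_2 t+\alpha_3 u|\geq|\alpha_1 s|\RE\big(1+\tfrac{\alpha_2}{\alpha_1}\tfrac{t}{s}+\tfrac{\alpha_3}{\alpha_1}\tfrac{u}{s}\big)\geq m\alpha_1 b(\theta)\big(1+\tfrac{\alpha_2}{\alpha_1}(ml(\theta)+m-1)+\tfrac{\alpha_3}{\alpha_1}(m^2h(\theta)+3m(k-1)l(\theta))\big)$; then, using $m\geq 1$ and $\RE(1+l(\theta))>0$, reduce this to $b(\theta)\big(\alpha_1+\alpha_2 l(\theta)+\alpha_3(m^2h(\theta)+3m(k-1)l(\theta))\big)$; and finally use $b(\theta)\geq e^{-1}$, $l(\theta)\geq-1$, $h(\theta)\geq-1$, $k-1\geq 1$, together with the hypothesis $\alpha_1-\alpha_2-m^2\alpha_3-3m(k-1)\alpha_3\geq\sqrt{2}\,e$, to conclude $|\alpha_1 s+\alpha_2 t+\alpha_3 u|\geq\tfrac{1}{e}(\alpha_1-\alpha_2-m^2\alpha_3-3m(k-1)\alpha_3)\geq\sqrt{2}$. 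This places $\xi(r,s,t,u;z)$ on or outside $C_1$, hence outside $\Omega$, so $\xi\in\Psi[\Omega,\Delta_e]$, and Lemma~\ref{8 firstlemmathirdorder} yields $p(z)\prec e^z$.

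The step I expect to need the most care is not analytic but geometric: justifying that it is enough to keep $\xi(r,s,t,u;z)$ out of the single disk bounded by $C_1$ (this is precisely why the crescent case avoids a genuine two-circle argument and only uses the $\sqrt{2}$ bound), and being careful that the $m$- and $k$-dependent terms collapse correctly so that the clean lower bound $\tfrac{1}{e}(\alpha_1-\alpha_2-m^2\alpha_3-3m(k-1)\alpha_3)$ actually emerges from the admissibility inequalities. Everything else is the routine bookkeeping already carried out in the preceding theorems.
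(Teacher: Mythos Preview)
Your proposal is correct and follows essentially the same approach as the paper's own proof: the paper likewise identifies $\Omega$ via the two circles $C_1$ and $C_2$, observes that it suffices to show $\xi(r,s,t,u;z)$ lies outside the disk bounded by $C_1$, and then runs the identical chain of inequalities (borrowed from Theorem~\ref{8 sigmoid3}) to obtain $|\alpha_1 s+\alpha_2 t+\alpha_3 u|\geq\tfrac{1}{e}(\alpha_1-\alpha_2-m^2\alpha_3-3m(k-1)\alpha_3)\geq\sqrt{2}$ before invoking Lemma~\ref{8 firstlemmathirdorder}. Your write-up is in fact slightly more careful than the paper's in articulating the geometric containment $\Omega\subset\{|w-1|<\sqrt{2}\}$ that makes the single-circle argument work.
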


\begin{proof}
Suppose $h(z)=z+\sqrt{1+z^2}$ for $z\in \mathbb{D}$ and thus $h(\mathbb{D})=\Omega:=\{\delta\in \mathbb{C}:|\delta^2-1|<2|\delta|\}$. Define $\xi:\mathbb{C}^4\times \mathbb{D}\rightarrow \mathbb{C}$ as $\xi(\eta_1,\eta_2,\eta_3,\eta_4;z)=1+\alpha_1 \eta_2+\alpha_2 \eta_3+\alpha_3 \eta_4$. For $\xi\in \Psi[\Omega, \Delta_e]$, we must have $\xi(r,s,t,u;z)\notin \Omega$. From the graph of $z+\sqrt{1+z^2}$ (see Fig. \ref{crescent}), we note that $\Omega$ is constructed by two circles $C_1$ and $C_2$, given by
        $$C_1:|z-1|=\sqrt{2}\quad\text{and}\quad C_2:|z+1|=\sqrt{2}.$$
It is obvious that $\Omega$ contains the disk enclosed by $C_1$ and excludes the portion of the disk enclosed by $C_2\cap C_1$. We have
      $$  |\xi(r,s,t,u;z)-1|=|\alpha_1 s+\alpha_2 t+\alpha_3 u|.$$
Similar to the proof of Theorem \ref{8 sigmoid3}, we have
\begin{align*}
       |\alpha_1 s+\alpha_2 t+\alpha_3 u|&\geq b(\theta)\bigg(\alpha_1+\alpha_2 l(\theta)+\alpha_3(m^2h(\theta)+3m(k-1)l(\theta))\bigg)\\
       &\geq \frac{1}{e}(\alpha_1-\alpha_2-m^2\alpha_3-3m(k-1)\alpha_3)\\
       &\geq \sqrt{2}.
\end{align*}
Thus, we can say that $\xi(r,s,t,u;z)$ does not belong to the circle $C_1$ which suffices us to deduce that $\xi(r,s,t,u;z)\notin \Omega$. Therefore, $\xi\in \Psi[\Omega,\Delta_e]$ and thus $p(z)\prec e^z$ by using Lemma \ref{8 firstlemmathirdorder}.
\end{proof}

If we take $h(z)$ to be $1+\sin z$ and $1+ze^z$, the results are as follow:
\begin{theorem}
Suppose $\alpha_1$, $\alpha_2$, $\alpha_3>0$ and $\alpha_1-\alpha_2-m^2\alpha_3-3m(k-1)\alpha_3\geq e\sinh 1$. Consider the analytic function $p$ in $\mathbb{D}$ such that $p(0)=1$ and
     $$   1+\alpha_1 zp'(z)+\alpha_2 z^2p''(z)+\alpha_3 z^3p'''(z)\prec 1+\sin z$$
implies $p(z)\prec e^z$.
\end{theorem}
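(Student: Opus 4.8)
The plan is to invoke Lemma~\ref{8 firstlemmathirdorder}, the third-order admissibility test specialised to $q(z)=e^{z}$, taking $\Omega:=h(\mathbb{D})$ with $h(z)=1+\sin z$, so that $\Omega=\{\delta\in\mathbb{C}:|\arcsin(\delta-1)|<1\}$. The natural functional is $\xi(\eta_1,\eta_2,\eta_3,\eta_4;z)=1+\alpha_1\eta_2+\alpha_2\eta_3+\alpha_3\eta_4$, for which $\xi(p(z),zp'(z),z^{2}p''(z),z^{3}p'''(z);z)$ is exactly the left-hand side of the hypothesised subordination. Hence it suffices to verify $\xi\in\Psi[\Omega,\Delta_e]$, that is, that $\xi(r,s,t,u;z)\notin\Omega$ for every admissible quadruple; the conclusion $p(z)\prec e^{z}$ then follows at once.

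Since $\Omega$ is not a disk, I would first pass to the smallest disk containing it. By \cite[Lemma~3.3]{chosine} that disk is $\{w\in\mathbb{C}:|w-1|<\sinh 1\}$, so it is enough to show $|\xi(r,s,t,u;z)-1|\ge\sinh 1$. Writing $|\xi(r,s,t,u;z)-1|=|\alpha_1 s+\alpha_2 t+\alpha_3 u|$ and running the estimate of Theorem~\ref{8 sigmoid3} --- factor out $\alpha_1|s|=m\alpha_1 b(\theta)$, pass to real parts, and use $\RE(1+t/s)\ge m(1+l(\theta))$ together with $\RE(u/s)\ge m^{2}h(\theta)+3m(k-1)l(\theta)$ --- gives
\[
|\alpha_1 s+\alpha_2 t+\alpha_3 u|\ge m\alpha_1 b(\theta)\Bigl(1+\tfrac{\alpha_2}{\alpha_1}(ml(\theta)+m-1)+\tfrac{\alpha_3}{\alpha_1}(m^{2}h(\theta)+3m(k-1)l(\theta))\Bigr).
\]
Using $m\ge1$ and $\RE(1+l(\theta))\ge0$ this drops to $b(\theta)\bigl(\alpha_1+\alpha_2 l(\theta)+\alpha_3(m^{2}h(\theta)+3m(k-1)l(\theta))\bigr)$, and since $l(\theta)=\cos\theta\ge-1$, $h(\theta)=\cos2\theta\ge-1$, $k\ge m\ge2$, and $b(\theta)=e^{\cos\theta}\ge e^{-1}$, it is at least $\tfrac1e\bigl(\alpha_1-\alpha_2-m^{2}\alpha_3-3m(k-1)\alpha_3\bigr)\ge\sinh1$ by hypothesis. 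Thus $\xi(r,s,t,u;z)\notin\{|w-1|<\sinh1\}\supseteq\Omega$, so $\xi\in\Psi[\Omega,\Delta_e]$ and Lemma~\ref{8 firstlemmathirdorder} delivers $p(z)\prec e^{z}$.

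The one point I would actually spell out, rather than treat as routine, is the sign bookkeeping inside that chain. To go from $m\alpha_1 b(\theta)(1+\cdots)$ to $b(\theta)(\alpha_1+\alpha_2 l(\theta)+\cdots)$ one must know the inner parenthesis is positive before dropping the factor $m\ge1$ --- and it is, being bounded below by $\tfrac1{\alpha_1}(\alpha_1-\alpha_2-m^{2}\alpha_3-3m(k-1)\alpha_3)>0$; then $ml(\theta)+m-1\ge l(\theta)$ since $(m-1)(1+l(\theta))\ge0$, while the $\alpha_3$-term is untouched. Likewise the final minimisation over $\theta$ is valid only because the bracket $\alpha_1+\alpha_2 l(\theta)+\alpha_3(m^{2}h(\theta)+3m(k-1)l(\theta))$ is nonnegative for every $\theta$, so it may be replaced by its uniform lower bound before $b(\theta)$ is sent to $e^{-1}$; this is exactly what makes the clash between the minimiser $\theta=\pi$ of $\cos\theta$ and the minimiser $\theta=\pi/2$ of $\cos2\theta$ harmless. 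I foresee no real obstacle here --- everything parallels Theorems~\ref{8 thmsigmoid} and~\ref{8 sigmoid3} --- and the corresponding $\mathcal{S}^{*}_{e}$ corollary, obtained by setting $p(z)=zf'(z)/f(z)$, is purely mechanical.
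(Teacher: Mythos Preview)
Your proposal is correct and follows essentially the same route as the paper: define $\xi(\eta_1,\eta_2,\eta_3,\eta_4;z)=1+\alpha_1\eta_2+\alpha_2\eta_3+\alpha_3\eta_4$, pass to the smallest enclosing disk $\{|w-1|<\sinh 1\}$ via \cite[Lemma~3.3]{chosine}, run the estimate of Theorem~\ref{8 sigmoid3} to get $|\alpha_1 s+\alpha_2 t+\alpha_3 u|\ge\sinh 1$, and conclude by Lemma~\ref{8 firstlemmathirdorder}. Your explicit sign checks on the inner parentheses before dropping the factor $m\ge1$ and before minimising over $\theta$ are in fact more careful than the paper, which leaves those steps implicit.
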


\begin{proof}
Suppose $h(z)=1+\sin z$ and $h(\mathbb{D})=\Omega:=\{\delta\in \mathbb{C}:|\arcsin (\delta-1)|<1\}$. Let us define $\xi:\mathbb{C}^4\times \mathbb{D}\rightarrow \mathbb{C}$ as $\xi(\eta_1,\eta_2,\eta_3,\eta_4;z)=1+\alpha_1 \eta_2+\alpha_2 \eta_3+\alpha_3 \eta_4$. For $\xi\in \Psi[\Omega, \Delta_e]$, it is required that $\xi(r,s,t,u;z)\notin \Omega$. Through \cite[Lemma 3.3]{chosine}, we note that the smallest disk containing $\Omega$ is $\{w: |w-1|<\sinh 1\}$. So,
      $$  |\xi(r,s,t,u;z)-1|=|\alpha_1 s+\alpha_2 t+\alpha_3 u|.$$
Analogous to the proof of Theorem \ref{8 sigmoid3}, we have
\begin{align*}
       |\alpha_1 s+\alpha_2 t+\alpha_3 u|&\geq b(\theta)\bigg(\alpha_1+\alpha_2 l(\theta)+\alpha_3(m^2h(\theta)+3m(k-1)l(\theta))\bigg)\\
       &\geq \frac{1}{e}(\alpha_1-\alpha_2 -m^2\alpha_3-3m(k-1)\alpha_3)\\
       &\geq \sinh 1.
\end{align*}
Clearly, $\xi(r,s,t,u;z)$ does not belong to the disk $\{w: |w-1|<\sinh 1\}$ indicates that $\xi(r,s,t,u;z)\notin \Omega$. Therefore, $\xi\in \Psi[\Omega,\Delta_e]$ and thus $p(z)\prec e^z$ by employing Lemma \ref{8 firstlemmathirdorder}.
\end{proof}

\begin{theorem}
Suppose $\alpha_1$, $\alpha_2$, $\alpha_3>0$ and $\alpha_1-\alpha_2-m^2\alpha_3-3m(k-1)\alpha_3\geq e^2$. Consider the analytic function $p$ in $\mathbb{D}$ such that $p(0)=1$ and
    $$    1+\alpha_1 zp'(z)+\alpha_2 z^2p''(z)+\alpha_3 z^3p'''(z)\prec 1+ze^z$$
implies $p(z)\prec e^z$.
\end{theorem}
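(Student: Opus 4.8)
The plan is to follow verbatim the template of the preceding second- and third-order theorems, now invoking Lemma \ref{8 firstlemmathirdorder}. First I would fix $h(z)=1+ze^z$, set $\Omega=h(\mathbb{D})$, and recall from \cite[Lemma 3.3]{kumar-ganganiaCardioid-2021} that the smallest disk centred at $1$ containing $\Omega$ is $\{w\in\mathbb{C}:|w-1|<e\}$; consequently it is enough to keep $\xi(r,s,t,u;z)$ outside this disk. Define $\xi:\mathbb{C}^4\times\mathbb{D}\to\mathbb{C}$ by $\xi(\eta_1,\eta_2,\eta_3,\eta_4;z)=1+\alpha_1\eta_2+\alpha_2\eta_3+\alpha_3\eta_4$, so that $|\xi(r,s,t,u;z)-1|=|\alpha_1 s+\alpha_2 t+\alpha_3 u|$ and the task reduces to showing this modulus is at least $e$ for every admissible $(r,s,t,u)$ associated with $q(z)=e^z$.

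Next I would reuse the estimate from Theorem \ref{8 sigmoid3}: bound $|\alpha_1 s+\alpha_2 t+\alpha_3 u|$ below by $\alpha_1|s|\,\RE\!\big(1+\tfrac{\alpha_2}{\alpha_1}\tfrac{t}{s}+\tfrac{\alpha_3}{\alpha_1}\tfrac{u}{s}\big)$, then substitute $|s|=mb(\theta)$ together with the admissibility inequalities $\RE(1+t/s)\ge m(1+l(\theta))$ and $\RE(u/s)\ge m^2h(\theta)+3m(k-1)l(\theta)$, where $b(\theta)=e^{\cos\theta}$, $l(\theta)=\cos\theta$ and $h(\theta)=\cos 2\theta$. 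This gives
\[
|\alpha_1 s+\alpha_2 t+\alpha_3 u|\ \ge\ m\alpha_1 b(\theta)\Big(1+\tfrac{\alpha_2}{\alpha_1}(ml(\theta)+m-1)+\tfrac{\alpha_3}{\alpha_1}(m^2h(\theta)+3m(k-1)l(\theta))\Big).
\]
Using $m\ge1$ and $1+l(\theta)\ge0$ to drop the outer factor $m$ and replace $ml(\theta)+m-1$ by $l(\theta)$, and then $b(\theta)\ge 1/e$ together with $h(\theta),l(\theta)\ge-1$, I obtain the $\theta$-free lower bound $\tfrac1e\big(\alpha_1-\alpha_2-m^2\alpha_3-3m(k-1)\alpha_3\big)$, which by hypothesis is $\ge e$. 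Hence $\xi(r,s,t,u;z)\notin\{w:|w-1|<e\}\supseteq\Omega$, so $\xi\in\Psi[\Omega,\Delta_e]$, and Lemma \ref{8 firstlemmathirdorder} yields $p(z)\prec e^z$.

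The only subtle point is justifying the successive inequalities: discarding the factor $m$ and the step $b(\theta)(\cdots)\ge \tfrac1e(\cdots)$ are legitimate only because the relevant brackets stay nonnegative, which is guaranteed by the standing hypothesis $\alpha_1-\alpha_2-m^2\alpha_3-3m(k-1)\alpha_3\ge e^2>0$ (indeed $\alpha_1+\alpha_2 l(\theta)+\alpha_3(m^2h(\theta)+3m(k-1)l(\theta))\ge \alpha_1-\alpha_2-m^2\alpha_3-3m(k-1)\alpha_3$ since $\cos\theta,\cos 2\theta\ge-1$). Apart from this bookkeeping the proof is entirely parallel to Theorem \ref{8 sigmoid3}, so I anticipate no real obstacle; the accompanying corollary for $\mathcal{S}^{*}_e$ then follows by putting $p(z)=zf'(z)/f(z)$ and using the expansion $\chi_f(z)$ in \eqref{8 corollary31}.
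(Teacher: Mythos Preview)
Your proposal is correct and follows essentially the same route as the paper's own proof: the same disk $\{w:|w-1|<e\}$ from \cite[Lemma~3.3]{kumar-ganganiaCardioid-2021}, the same $\xi$, the same chain of inequalities borrowed from Theorem~\ref{8 sigmoid3}, and the same appeal to Lemma~\ref{8 firstlemmathirdorder}. Your extra remark verifying nonnegativity of the intermediate brackets is a welcome clarification the paper leaves implicit.
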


\begin{proof}
Suppose $h(z)=1+ze^z$ and $\Omega=h(\mathbb{D})$. We define $\xi:\mathbb{C}^4\times \mathbb{D}\rightarrow \mathbb{C}$ as $\xi(\eta_1,\eta_2,\eta_3,\eta_4;z)=1+\alpha_1 \eta_2+\alpha_2 \eta_3+\alpha_3 \eta_4$. For $\xi\in \Psi[\Omega, \Delta_e]$, we need to have that $\xi(r,s,t,u;z)\notin \Omega$. Through \cite[Lemma 3.3]{kumar-ganganiaCardioid-2021}, we note that the smallest disk containing $\Omega$ is $\{w\in\mathbb{C}: |w-1|<e\}$. Also,
      $$  |\xi(r,s,t,u;z)-1|=|\alpha_1 s+\alpha_2 t+\alpha_3 u|.$$
Analogous to the proof of Theorem \ref{8 sigmoid3}, we have
\begin{align*}
       |\alpha_1 s+\alpha_2 t+\alpha_3 u|&\geq b(\theta)\bigg(\alpha_1+\alpha_2 l(\theta)+\alpha_3(m^2h(\theta)+3m(k-1)l(\theta))\bigg)\\
       &\geq \frac{1}{e}(\alpha_1-\alpha_2-m^2\alpha_3-3m(k-1)\alpha_3)\\
       &\geq e.
\end{align*}
Clearly, $\xi(r,s,t,u;z)$ does not belong to the disk $\{w\in \mathbb{C}: |w-1|<e\}$, is enough to conclude that $\xi(r,s,t,u;z)\notin \Omega$. Therefore, $\xi\in \Psi[\Omega,\Delta_e]$ and thus $p(z)\prec e^z$ by using Lemma \ref{8 firstlemmathirdorder}.
\end{proof}

We complete this section by considering $h(z)=1+\sinh^{-1}z$ and $e^z$ itself which are followed by a resulting corollary.
\begin{theorem}
Suppose $\alpha_1$, $\alpha_2$, $\alpha_3>0$ and $2(\alpha_1-\alpha_2 -m^2\alpha_3-3m(k-1)\alpha_3)\geq \pi e$. Consider the analytic function $p$ in $\mathbb{D}$ such that $p(0)=1$ and
     $$   1+\alpha_1 zp'(z)+\alpha_2 z^2p''(z)+\alpha_3 z^3p'''(z)\prec 1+\sinh^{-1} z$$
implies $p(z)\prec e^z$.
\end{theorem}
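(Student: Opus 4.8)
The plan is to follow the same admissibility-class strategy used in Theorems \ref{8 thirdorderfirst}--\ref{8 sigmoid3}, applying Lemma \ref{8 firstlemmathirdorder} with $q(z)=e^z$. First I would set $h(z)=1+\sinh^{-1}z$ and record that, since $\sinh^{-1}$ is the inverse of $\sinh$ and $h$ is univalent on $\mathbb{D}$, one has $h(\mathbb{D})=\Omega:=\{\delta\in\mathbb{C}:|\sinh(\delta-1)|<1\}$. Then define the operator $\xi:\mathbb{C}^4\times\mathbb{D}\rightarrow\mathbb{C}$ by $\xi(\eta_1,\eta_2,\eta_3,\eta_4;z)=1+\alpha_1\eta_2+\alpha_2\eta_3+\alpha_3\eta_4$, so that the hypothesis reads $\xi(p(z),zp'(z),z^2p''(z),z^3p'''(z);z)\in\Omega$. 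To invoke Lemma \ref{8 firstlemmathirdorder} it suffices to verify $\xi\in\Psi[\Omega,\Delta_e]$, that is, that $\xi(r,s,t,u;z)\notin\Omega$ whenever $r,s,t,u$ satisfy the admissibility relations displayed just before Lemma \ref{8 firstlemmathirdorder}, namely $r=e^{e^{i\theta}}$, $s=me^{i\theta}e^{e^{i\theta}}$, and the real-part bounds on $t/s$ and $u/s$ governed by $b(\theta)$, $l(\theta)$, $h(\theta)$.

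The core of the argument is a lower bound for $|\xi(r,s,t,u;z)-1|=|\alpha_1 s+\alpha_2 t+\alpha_3 u|$. Exactly as in the proof of Theorem \ref{8 sigmoid3}, I would write $|\alpha_1 s+\alpha_2 t+\alpha_3 u|=\alpha_1|s|\,|1+(\alpha_2/\alpha_1)(t/s)+(\alpha_3/\alpha_1)(u/s)|\geq\alpha_1|s|\,\RE(1+(\alpha_2/\alpha_1)(t/s)+(\alpha_3/\alpha_1)(u/s))$, then substitute $|s|=mb(\theta)$ together with $\RE(t/s)\geq ml(\theta)+m-1$ and $\RE(u/s)\geq m^2h(\theta)+3m(k-1)l(\theta)$. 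Using $m\geq1$ and $\RE(1+l(\theta))>0$ to reduce the $m$-dependent terms down to the value $m=1$, and then the minima $b(\theta)\geq1/e$, $l(\theta)\geq-1$, $h(\theta)\geq-1$, this chain collapses to
\[
|\alpha_1 s+\alpha_2 t+\alpha_3 u|\geq\frac{1}{e}\bigl(\alpha_1-\alpha_2-m^2\alpha_3-3m(k-1)\alpha_3\bigr)\geq\frac{\pi}{2},
\]
where the last inequality is precisely the hypothesis $2(\alpha_1-\alpha_2-m^2\alpha_3-3m(k-1)\alpha_3)\geq\pi e$.

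Finally I would quote \cite[Remark 2.7]{kush}, which states that the smallest disk centered at $1$ containing $\Omega$ is $\{w\in\mathbb{C}:|w-1|<\pi/2\}$. Since $|\xi(r,s,t,u;z)-1|\geq\pi/2$, the value $\xi(r,s,t,u;z)$ lies outside that disk and hence outside $\Omega$; thus $\xi\in\Psi[\Omega,\Delta_e]$, and Lemma \ref{8 firstlemmathirdorder} yields $p(z)\prec e^z$. I do not anticipate a genuine obstacle here: the only points needing care are the bookkeeping of which admissibility inequality controls which term, and confirming that the monotonicity in $m$ (which rests on $\alpha_i>0$ and $\RE(1+l(\theta))>0$, handled exactly as in Theorems \ref{8 thirdorderfirst} and \ref{8 sigmoid3}) genuinely permits the reduction to $m=1$; the containment of $\Omega$ in the disk of radius $\pi/2$ is imported wholesale from \cite{kush}.
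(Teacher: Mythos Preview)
Your proposal is correct and follows essentially the same approach as the paper's own proof: define $\xi$, reduce to the lower bound $|\alpha_1 s+\alpha_2 t+\alpha_3 u|\geq\pi/2$ exactly as in Theorem \ref{8 sigmoid3}, and then use the containment $\Omega\subset\{w:|w-1|<\pi/2\}$ from \cite[Remark 2.7]{kush} to conclude $\xi\in\Psi[\Omega,\Delta_e]$ and apply Lemma \ref{8 firstlemmathirdorder}. The only cosmetic difference is that the paper quotes the disk containment before the estimate rather than after.
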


\begin{proof}
Suppose $h(z)=1+\sinh^{-1} z$ and $h(\mathbb{D})=\Omega:=\{\delta\in\mathbb{C}:|\sinh (\delta-1)|<1\}$. Define $\xi:\mathbb{C}^4\times \mathbb{D}\rightarrow \mathbb{C}$ as $\xi(\eta_1,\eta_2,\eta_3,\eta_4;z)=1+\alpha_1 \eta_2+\alpha_2 \eta_3+\alpha_3 \eta_4$. For $\xi\in \Psi[\Omega, \Delta_e]$, it is required that $\xi(r,s,t,u;z)\notin \Omega$. Through \cite[Remark 2.7]{kush}, we note that the smallest disk containing $\Omega$ is $\{w\in\mathbb{C}: |w-1|<\pi/2\}$. So,
      $$  |\xi(r,s,t,u;z)-1|=|\alpha_1 s+\alpha_2 t+\alpha_3 u|.$$
Similar to the proof of Theorem \ref{8 sigmoid3}, we have
\begin{align*}
       |\alpha_1 s+\alpha_2 t+\alpha_3 u|&\geq b(\theta)\bigg(\alpha_1+\alpha_2 l(\theta)+\alpha_3(m^2h(\theta)+3m(k-1)l(\theta))\bigg)\\
       &\geq \frac{1}{e}(\alpha_1-\alpha_2-m^2\alpha_3-3m(k-1)\alpha_3)\\
       &\geq \frac{\pi}{2}.
\end{align*}
Clearly, $\xi(r,s,t,u;z)\notin\{w\in \mathbb{C}: |w-1|<\pi/2\}$ which suffices to prove that $\xi(r,s,t,u;z)\notin \Omega$. Therefore, $\xi\in \Psi[\Omega,\Delta_e]$ and thus $p(z)\prec e^z$ by Lemma \ref{8 firstlemmathirdorder}.
\end{proof}

\begin{theorem}
Suppose $\alpha_1$, $\alpha_2$, $\alpha_3>0$ and $\alpha_1-\alpha_2-m^2\alpha_3-3m(k-1)\alpha_3\geq e(e-1)$. Consider the analytic function $p$ in $\mathbb{D}$ such that $p(0)=1$ and
       $$ 1+\alpha_1 zp'(z)+\alpha_2 z^2p''(z)+\alpha_3 z^3p'''(z)\prec e^z$$
implies $p(z)\prec e^z$.
\end{theorem}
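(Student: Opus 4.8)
The plan is to follow verbatim the template used for the six preceding third‑order theorems, with the sole difference that here $\Omega=\Delta_e$ is itself the image of $e^z$, so the disk‑containment arguments are replaced by an application of Lemma~\ref{prilemma4}. First I would put $h(z)=e^z$, so that $\Omega=h(\mathbb{D})=\Delta_e$, and define $\xi:\mathbb{C}^4\times\mathbb{D}\to\mathbb{C}$ by $\xi(\eta_1,\eta_2,\eta_3,\eta_4;z)=1+\alpha_1\eta_2+\alpha_2\eta_3+\alpha_3\eta_4$. By Lemma~\ref{8 firstlemmathirdorder} it is enough to verify $\xi\in\Psi[\Delta_e,e^z]$, i.e.\ that $\xi(r,s,t,u;z)\notin\Delta_e$ whenever the admissibility relations hold: $r=e^{e^{i\theta}}$, $s=me^{i\theta}e^{e^{i\theta}}$ (so $|s|=mb(\theta)$), $\RE(1+t/s)\geq m(1+l(\theta))$ and $\RE(u/s)\geq m^2h(\theta)+3m(k-1)l(\theta)$, where $b(\theta)=e^{\cos\theta}$, $l(\theta)=\cos\theta$, $h(\theta)=\cos2\theta$ and $k\geq m\geq 2$.

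Second, I would estimate $|\alpha_1 s+\alpha_2 t+\alpha_3 u|$ exactly as in Theorem~\ref{8 sigmoid3}: write it as $\alpha_1|s|\,\bigl|1+(\alpha_2/\alpha_1)(t/s)+(\alpha_3/\alpha_1)(u/s)\bigr|\geq\alpha_1|s|\,\RE\bigl(1+(\alpha_2/\alpha_1)(t/s)+(\alpha_3/\alpha_1)(u/s)\bigr)$, then substitute $|s|=mb(\theta)$ together with the admissibility lower bounds on $\RE(t/s)$ and $\RE(u/s)$ to get the bound $m\alpha_1 b(\theta)\bigl(1+(\alpha_2/\alpha_1)(ml(\theta)+m-1)+(\alpha_3/\alpha_1)(m^2h(\theta)+3m(k-1)l(\theta))\bigr)$. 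Using $m\geq 1$ and $\RE(1+l(\theta))>0$ this reduces to $b(\theta)\bigl(\alpha_1+\alpha_2 l(\theta)+\alpha_3(m^2h(\theta)+3m(k-1)l(\theta))\bigr)$, and then $b(\theta)\geq e^{-1}$, $l(\theta)\geq-1$, $h(\theta)\geq-1$, $k>1$ collapse it to $\tfrac1e\bigl(\alpha_1-\alpha_2-m^2\alpha_3-3m(k-1)\alpha_3\bigr)$, which by hypothesis is $\geq e-1$.

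Third, since $\xi(r,s,t,u;z)-1=\alpha_1 s+\alpha_2 t+\alpha_3 u$, the previous step gives $|\xi(r,s,t,u;z)-1|\geq e-1$, so Lemma~\ref{prilemma4} yields $|\log\xi(r,s,t,u;z)|=|\log(1+\alpha_1 s+\alpha_2 t+\alpha_3 u)|\geq 1$; hence $\xi(r,s,t,u;z)\notin\Delta_e$. Therefore $\xi\in\Psi[\Delta_e,e^z]$, and Lemma~\ref{8 firstlemmathirdorder} delivers $p(z)\prec e^z$, as desired. (As in the earlier third‑order results, the structural hypothesis $|zp'(z)e^{\zeta}|\leq m$ required to invoke Lemma~\ref{8 firstlemmathirdorder} is inherited from the standing setup of that lemma.)

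I do not expect a real obstacle: the whole proof is a routine instance of the pattern already executed for $2/(1+e^{-z})$, $z+\sqrt{1+z^2}$, $1+\sin z$, $1+ze^z$ and $1+\sinh^{-1}z$ in this section, capped off with Lemma~\ref{prilemma4} in place of a smallest‑enclosing‑disk lemma. The only point needing attention is the sign bookkeeping when passing to the clean lower bound, since both $l(\theta)$ and $h(\theta)$ take negative values on $|\zeta|=1$: one must check that replacing them by their common minimum $-1$ genuinely decreases the (positive) bracketed quantity, which is where $\alpha_1-\alpha_2-m^2\alpha_3-3m(k-1)\alpha_3>0$ and $k-1>0$ are used, and that the monotonicity step using $b(\theta)\geq e^{-1}$ is legitimate precisely because that bracket is positive.
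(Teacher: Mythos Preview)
Your proposal is correct and mirrors the paper's own proof essentially line for line: set $\Omega=\Delta_e$, define $\xi(\eta_1,\eta_2,\eta_3,\eta_4;z)=1+\alpha_1\eta_2+\alpha_2\eta_3+\alpha_3\eta_4$, reuse the estimate from Theorem~\ref{8 sigmoid3} to get $|\alpha_1 s+\alpha_2 t+\alpha_3 u|\geq e-1$, and then invoke Lemma~\ref{prilemma4} to conclude $|\log\xi(r,s,t,u;z)|\geq 1$, so $\xi\in\Psi[\Delta_e,\Delta_e]$ and Lemma~\ref{8 firstlemmathirdorder} finishes. Your added remarks on the sign bookkeeping and the standing hypothesis of Lemma~\ref{8 firstlemmathirdorder} are more explicit than the paper, but the argument is the same.
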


\begin{proof}
Suppose $h(z)=e^z$ and $\Omega=\Delta_e$. We define $\xi:\mathbb{C}^4\times \mathbb{D}\rightarrow \mathbb{C}$ as $\xi(\eta_1,\eta_2,\eta_3,\eta_4;z)=1+\alpha_1 \eta_2+\alpha_2 \eta_3+\alpha_3 \eta_4$. For $\xi\in \Psi[\Delta_e, \Delta_e]$, it is necessary to have $\xi(r,s,t,u;z)\notin \Delta_e$. So,
      $$  |\xi(r,s,t,u;z)-1|=|\alpha_1 s+\alpha_2 t+\alpha_3 u|.$$
Similar to Theorem \ref{8 sigmoid3}, we get
\begin{align}\label{8 83}
       |\alpha_1 s+\alpha_2 t+\alpha_3 u|&\geq b(\theta)\bigg(\alpha_1+\alpha_2 l(\theta)+\alpha_3(m^2h(\theta)+3m(k-1)l(\theta))\bigg)\nonumber\\
       &\geq \frac{1}{e}(\alpha_1-\alpha_2-m^2\alpha_3-3m(k-1)\alpha_3)\nonumber\\
       &\geq e-1.
\end{align}
Next, we consider
    $$ |\log (\xi(r,s,t,u;z)|=|\log (1+\alpha_1 s+\alpha_2 t+\alpha_3 u)|.$$
Through Lemma \ref{prilemma4} and \eqref{8 83}, we have
  $$ |\log (1+\alpha_1 s+\alpha_2 t+\alpha_3 u)|\geq 1, $$
 which implies that $\xi(r,s,t,u;z)\notin \Delta_e$. Therefore, $\xi\in \Psi[\Delta_e,\Delta_e]$ and thus $p(z)\prec e^z$. 
\end{proof}

Theorems 5.5-5.10 yield the following conclusion if $p(z)=zf'(z)/f(z)$ is considered as in \eqref{8 corollary31}.
\begin{corollary}
Suppose $\alpha_1$, $\alpha_2$ and $\alpha_3$ be positive real numbers with $f\in\mathcal{A}$. Then, $f(z)\in \mathcal{S}^{*}_{e}$ if any of these conditions hold:
\begin{enumerate}[$(i)$]
\item $\chi_{f}(z)\prec 2/(1+e^{-z})$ and $\alpha_1-\alpha_2-m^2\alpha_3-3m(k-1)\alpha_3\geq e r_0$, where $r_0\approx 0.546302$ is the positive root of the equation $r^2+2 \cot (1)r-1=0$.
\item $\chi_{f}(z)\prec z+\sqrt{1+z^2}$ and $\alpha_1-\alpha_2-m^2\alpha_3-3m(k-1)\alpha_3\geq \sqrt{2}e$.
\item $\chi_{f}(z)\prec 1+\sin z$ and $\alpha_1-\alpha_2-m^2\alpha_3-3m(k-1)\alpha_3\geq e\sinh 1$.
\item $\chi_{f}(z)\prec 1+ze^z$ and $\alpha_1-\alpha_2-m^2\alpha_3-3m(k-1)\alpha_3\geq e^2$.
\item $\chi_{f}(z)\prec 1+\sinh^{-1}z$ and $2(\alpha_1-\alpha_2 -m^2\alpha_3-3m(k-1)\alpha_3)\geq \pi e$.
\item $\chi_{f}(z)\prec e^z$ and $\alpha_1-\alpha_2-m^2\alpha_3-3m(k-1)\alpha_3\geq e(e-1)$.
\end{enumerate}
\end{corollary}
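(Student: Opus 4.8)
The plan is to deduce the corollary directly from Theorems~5.5--5.10 by the substitution $p(z)=zf'(z)/f(z)$. For $f\in\mathcal A$ this $p$ is analytic on $\mathbb D$ with $p(0)=1$, and by definition $f\in\mathcal S^*_e$ is exactly the statement $zf'(z)/f(z)\prec e^z$, i.e. $p(z)\prec e^z$. Hence it suffices to verify that under this substitution
\[
1+\alpha_1 zp'(z)+\alpha_2 z^2p''(z)+\alpha_3 z^3p'''(z)=\chi_f(z),
\]
where $\chi_f$ is the expression \eqref{8 corollary31} written in the variables $A_1=zf'(z)/f(z)$, $A_2=z^2f''(z)/f(z)$, $A_3=z^3f'''(z)/f(z)$, $A_4=z^4f^{(iv)}(z)/f(z)$. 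Granting this, part $(i)$ is the theorem with $h(z)=2/(1+e^{-z})$ (Theorem~\ref{8 sigmoid3}), part $(ii)$ the one with $h(z)=z+\sqrt{1+z^2}$, part $(iii)$ the one with $h(z)=1+\sin z$, part $(iv)$ the one with $h(z)=1+ze^z$, part $(v)$ the one with $h(z)=1+\sinh^{-1}z$, and part $(vi)$ the one with $h(z)=e^z$: in each case $\chi_f(z)\prec h(z)$ becomes the differential subordination appearing in that theorem, and the displayed inequality on $\alpha_1,\alpha_2,\alpha_3$ (involving $m$ and $k$) is precisely its hypothesis.

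The only genuine computation is the identity above. I would obtain it from the fact that $g\mapsto zg'$ is a derivation together with the single relation
\[
z\frac{d}{dz}\!\left(\frac{z^{k}f^{(k)}(z)}{f(z)}\right)=\frac{z^{k+1}f^{(k+1)}(z)}{f(z)}+k\,\frac{z^{k}f^{(k)}(z)}{f(z)}-\frac{zf'(z)}{f(z)}\cdot\frac{z^{k}f^{(k)}(z)}{f(z)},
\]
that is, $z\tfrac{d}{dz}A_k=A_{k+1}+(k-A_1)A_k$. Applying it with $k=1$ gives $zp'=A_2+A_1-A_1^2$; then $z^2p''=z\tfrac{d}{dz}(zp')-zp'$ and $z^3p'''=z\tfrac{d}{dz}(z^2p'')-2\,z^2p''$ produce, after expanding the monomials $A_1^2$, $A_1^3$, $A_1^4$, $A_1A_2$, $A_1^2A_2$, $A_1A_3$, $A_2^2$ that arise and collecting coefficients, the polynomial $\chi_f(z)$. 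A useful built-in check is that setting $\alpha_3=0$ must recover the second-order expression $Y_f(z)$ of \eqref{8 corollary21}.

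The main obstacle is therefore not conceptual but the bookkeeping in the third-derivative expansion: there are enough terms that a sign or coefficient slip is easy, which is why the $\alpha_3=0$ reduction to $Y_f$ is worth running as a sanity check. Once $\chi_f(z)$ has been identified with the operator $1+\alpha_1 zp'+\alpha_2 z^2p''+\alpha_3 z^3p'''$ evaluated at $p=zf'/f$, each of the six assertions is an immediate application of the corresponding theorem among Theorems~5.5--5.10, yielding $p(z)\prec e^z$, i.e. $f\in\mathcal S^*_e$.
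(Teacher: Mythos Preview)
Your proposal is correct and follows exactly the paper's approach: the paper states this corollary as an immediate consequence of Theorems~5.5--5.10 under the substitution $p(z)=zf'(z)/f(z)$, with $\chi_f$ defined in \eqref{8 corollary31} precisely so that $\chi_f(z)=1+\alpha_1 zp'(z)+\alpha_2 z^2p''(z)+\alpha_3 z^3p'''(z)$. Your added detail on verifying this identity via the relation $z\tfrac{d}{dz}A_k=A_{k+1}+(k-A_1)A_k$ and the sanity check against $Y_f$ at $\alpha_3=0$ only make explicit what the paper leaves to the reader.
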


				%
\subsection*{Acknowledgment}
Neha Verma is thankful to the Department of Applied Mathematics, Delhi Technological University, New Delhi-110042 for providing Research Fellowship.

\end{document}